\documentclass[10pt,a4paper,reqno]{amsart}
\sloppy

\usepackage{hyperref}

\usepackage[margin=1in,marginparwidth=0.8in]{geometry}
\usepackage{tikz}
\usepackage{graphicx}
\usepackage{todonotes}
\usepackage{mathtools}
\usepackage{shuffle}
\usepackage{color}
\usepackage{comment}

\usepackage{ amssymb, mathrsfs }

\def\ceq{\:{}\coloneqq{}\:}
\def\={\:{}={}\:}
\newcommand{\modsh}{\:\mathrel{\overset{\shuffle}{=}}\:}

\DeclareMathOperator{\Li}{Li}
\DeclareMathOperator{\Sym}{Sym}
\DeclareMathOperator{\Alt}{Alt}
\DeclareMathOperator{\id}{id}

\newcommand{\abs}[1]{|#1|}

\def\Symb{\operatorname{Symb}}
\def\Symbsh{\operatorname{Symb}^\shuffle}
\def\LiCS{\mathcal{L}^\shuffle}

\def\SCS{\mathcal{S}^\shuffle}

\def\LiZS{\mathscr{L}}

\DeclareMathOperator{\CR}{cr}
\DeclareMathOperator{\R}{r}
\DeclareMathOperator{\sv}{sv}
\DeclareMathOperator{\Ss}{s}

\renewcommand{\Im}{\operatorname{Im}}
\renewcommand{\Re}{\operatorname{Re}}

\newcommand{\mot}{\mathfrak{m}}
\newcommand{\CC}{\mathbb{C}}
\newcommand{\QQ}{\mathbb{Q}}
\newcommand{\ZZ}{\mathbb{Z}}

\newcommand {\sm} {\smallsetminus}

\DeclareMathOperator{\Aut}{Aut}
\DeclareMathOperator{\pr}{pr}

\newcommand{\Mod}[1]{\ (\mathrm{mod}\ \text{\normalfont #1})}

\newtheorem{Thm}{Theorem}
\newtheorem{Cor}[Thm]{Corollary}
\newtheorem{Prop}[Thm]{Proposition}
\newtheorem{Lem}[Thm]{Lemma}

\theoremstyle{definition}
\newtheorem{Def}[Thm]{Definition}
\newtheorem{Rem}[Thm]{Remark}

\allowdisplaybreaks

\title{On functional equations for Nielsen polylogarithms}
\date{\today}
\author[Charlton]{Steven Charlton}
\address{Max Planck Institute for Mathematics, Vivatsgasse 7,
    Bonn 53111, Germany}
\email{steven.charlton@gmail.com}

\author[Gangl]{Herbert Gangl}
\address{Department of Mathematical Sciences, Durham University, Durham DH1 3LE, United Kingdom}
\email{herbert.gangl@durham.ac.uk}

\author[Radchenko]{Danylo Radchenko}
\address{Max Planck Institute for Mathematics, Vivatsgasse 7,
    Bonn 53111, Germany}
\email{danradchenko@gmail.com}

\subjclass[2010]{Primary 11G55; Secondary 33E20, 39B32}
\keywords{Polylogarithms, Nielsen polylogarithms, functional equations, five-term relation, special values}

\begin{document}
\begin{abstract}
	We derive new functional equations for Nielsen polylogarithms. We show that, when viewed modulo $\Li_5$ and products of lower weight functions, the weight $5$ Nielsen polylogarithm $S_{3,2}$ satisfies the dilogarithm five-term relation. We also give some functional equations and evaluations for Nielsen polylogarithms in weights up to $8$, and general families of identities in higher weight.
\end{abstract}
\date{13 August 2019}
\maketitle

\section{Introduction}
The classical $m$-th polylogarithm function $\Li_m$ is an 
analytic function defined by the Taylor series
	\[\Li_m(z) \= \sum_{n=1}^{\infty}\frac{z^n}{n^m}\,,\]
convergent for $\left|z\right| < 1$. For $m\ge 1$ the function $\Li_m$ 
extends to a multivalued analytic function 
on $\mathbb{P}^1(\CC)\sm\{0,1,\infty\}$, 
which can be seen, for example, from the recursive formula
	\[\Li_m(z) \= \int_{0}^{z}\Li_{m-1}(t)\frac{dt}{t}\,,\]
together with the initial condition $\Li_1(z)\=-\log(1-z)$.
Polylogarithms appear in several areas of mathematics:
for example, the Euler dilogarithm $\Li_2$ (or, more precisely, a 
single-valued version) can be used to compute volumes of hyperbolic 
$3$-folds, special values of Dedekind zeta functions at $s=2$, and it 
is intimately related to algebraic $K$-theory (more precisely to $K_3$ and $K_2$) 
of number fields, see~\cite{Za1},~\cite{Za3},~\cite{Za-Ga}.
One of the most curious features of polylogarithms is that they satisfy a 
plethora of identities and functional equations, the most famous of which is 
undoubtedly the five-term relation,
	\begin{equation} \label{eq:fiveterm}
    \Li_2(x)+\Li_2(y)
    -\Li_2\Big(\frac{x}{1-y}\Big)
    -\Li_2\Big(\frac{y}{1-x}\Big)
	+\Li_2\Big(\frac{xy}{(1-x)(1-y)}\Big) \= -\log(1-x)\log(1-y)\,,
    \end{equation}
(for \( |x| + |y| < 1 \)) in this or any of its equivalent forms 
(see Section 1.5 in \cite{Le}). Numerous other identities of this
kind are known, both for $\Li_2$ and for $\Li_m$ for $m>2$, but as soon as
$m$ becomes greater than $7$, the only relations that are known in general
are the inversion identity that relates $\Li_m(z)$ and $\Li_m(z^{-1})$, and 
the distribution relations 
$n^{1-m}\Li_m(z^n) \= \sum_{\lambda^n=1}\Li_m(\lambda z)$ for $n\geq 1$.
For an introduction to functional equations for polylogarithms we
refer the reader to~\cite{Za2}. Many examples of functional equations
for $\Li_m$ up to $m=5$ can already be found in Lewin's classical book \cite{Le},
while newer results are e.g.~given in \cite{Ki} for \( m = 2 \) and
in \cite{WoLi3}, \cite{Go1} for \( m \leq 3 \).
Inaugural results in weight 6 and 7 are treated in \cite{Ga-s},\cite{Ga-c}.  
For further examples, and background, we refer also to these theses \cite{Ch,Ga,R}.

In~\cite{Ni} Nielsen defined and studied the functions $S_{n,p}$
given by the following integral
    \begin{equation}\label{eqn:nielsendef}
    S_{n,p}(z) \ceq \frac{(-1)^{n+p-1}}{(n-1)!\,p!}
    \int_{0}^{1}\log^{n-1}(t)\log^{p}(1-zt)\frac{dt}{t}\,.
    \end{equation}
It is easy to show that $S_{m-1,1} \= \Li_{m}$, so that classical polylogarithms 
are a special case of Nielsen's generalised polylogarithms. 
On the other hand, Nielsen polylogarithms themselves are special cases
of multiple polylogarithms and iterated integrals:
    \begin{equation}\label{eq:nielsenasLi} 
    \begin{split}
    S_{n,p}(z) &\= \Li_{\{1\}^{p-1},n+1}(1,\dots,1,z)\\
    &\= (-1)^p I(0; \{1\}^p, \{0\}^n; z)\,,
    \end{split}
    \end{equation}
where $\{a\}^k$ denotes the string $a$ repeated $k$ times.  Here $\Li_{n_1,\dots,n_d}$ is the multiple polylogarithm function
    \[
    \Li_{n_1,\dots,n_d}(z_1,\dots,z_d) \ceq \sum_{0<k_1<\dots<k_d}
    \frac{z_1^{k_1}\cdots z_d^{k_d}}
    {k_1^{n_1}\cdots k_d^{n_d}}\,,
    \]
and $I$ is the iterated integral
    \[
    I(x_0; x_1, \ldots, x_N; x_{N+1}) \ceq
    \int_{x_0 < t_1 < \cdots < t_d < x_{N+1}} 
    \frac{dt_1}{t_1 - x_1} \wedge \frac{dt_2}{t_2 - x_2} \wedge \cdots \wedge
    \frac{dt_d}{t_d - x_N}\,.
    \]
(The integral implicitly depends on the choice of a path going 
from $x_0$ to $x_{N+1}$, where the integration variables $t_i$ are considered to be ordered on the path.)
Note that, as a multiple polylogarithm, $S_{n,p}$ has weight $n+p$ and 
depth $\leq p$. 
Moreover, the iterated integral identity extends the range of definition 
of $S_{n,p}$ to the case $ n = 0 $ and~$ p = 0 $.

While they are certainly not as well-studied as their classical counterparts, 
Nielsen polylogarithms do naturally appear in some calculations in 
quantum electrodynamics (for some references see~\cite{Ko2}),
and they also provide the simplest examples (aside from $\Li_m$) 
of harmonic polylogarithms, that appear, for example, 
in computations of planar scattering amplitudes~\cite{Di-Du-Pe}.
For the original paper on harmonic polylogarithms, see~\cite{Re-Ve}.
There is also some interest in computing special values of $S_{n,p}(z)$, at 
least when $z$ is a root of unity, since they arise 
in ~$\varepsilon$-expansions of some Feynman diagrams~\cite{Da-Ka}, 
and also in connection with Mahler measures and planar random 
walks~\cite{Bo-St1} (see also~\cite{Bo-St2}).  For approaches to the numerical computation of values of Nielsen polylogarithms and harmonic polylogarithms, see respectively \cite{Ko-Mi-Re} and \cite{Ge-Re}.

Despite this, there appear to be very few results concerning functional
equations for $S_{n,p}$. In fact, excluding the case of classical polylogarithms $\Li_m=S_{m-1,1}$, the most general functional equations that we have found in the literature are the relations that express $S_{n,p}(\gamma(z))$
in terms of $S_{n',p'}(z)$, where $n'+p'=n+p$ and $\gamma(z)$ is one 
of the M\"obius transformations $\{z, 1-z, \frac{1}{z}, \frac{z-1}{z}, 
\frac{z}{z-1},\frac{1}{1-z}\}$ (see~\cite{Ko2}, or more recent~\cite{shangNew}).
There is no known analogue of distribution relations for $S_{n,p}$ 
when $p\ge 2$.

Part of our initial motivation comes from the conjectures of Goncharov regarding the structure of the so-called motivic Lie coalgebra, which predicts the reduction in depth of certain linear combinations of iterated integrals.  One of these predictions is that $S_{3,2}$ of the five-term relation reduces to $\Li_5$, i.e.
    \begin{equation*} 
    S_{3,2}(x)+S_{3,2}(y)
    -S_{3,2}\Big(\frac{x}{1-y}\Big)
    -S_{3,2}\Big(\frac{y}{1-x}\Big)
    +S_{3,2}\Big(\frac{xy}{(1-x)(1-y)}\Big) \= 0 \Mod{$\Li_5$, products} \,.
    \end{equation*}
    Indeed we establish this in Theorem \ref{thm:s32fiveterm} below, together with the explicit form of the $\Li_5$ terms, thus corroborating part of these conjectures in weight 5.  We also establish other examples of depth reduction for Nielsen polylogarithms, by way of giving functional equations for various $ S_{n,p} $ up to and including weight 8.  Extending the results of Nielsen and of K\"olbig we also consider various evaluations and ladders (i.e.~relations among \( S_{n,p} \) at \( \pm \theta^k \) for some algebraic number \( \theta \) and \( k\in \ZZ \)) of Nielsen polylogarithms, most of them apparently new.
    \medskip

\paragraph{\textbf{Acknowledgements.}}  This work was initiated during our joint stay at the Kyushu University Multiple Zeta Value Research Center under the grant \emph{2017 Kyushu University World Premier International Researchers Invitation Program `Progress 100'}.  This work continued during the Trimester Program \emph{Periods in Number Theory, Algebraic Geometry and Physics} at the Hausdorff Research Institute for Mathematics in Bonn.  We are grateful to these institutions, as well as to the Max Planck Institute for Mathematics in Bonn, for their hospitality, support and excellent working conditions.

\section{Motivic framework, and symbols}\label{sec:symbmot}

We first briefly recall some of the motivic framework of multiple polylogarithms and iterated integrals from the works of Goncharov \cite{Go-galois} and \cite{Go-mpl}, in particular their Hopf algebra structure and the symbol of iterated integrals.

\subsection{The Hopf algebra of motivic iterated integrals}

In \cite{Go-galois},  Goncharov upgraded the iterated integrals \( I(x_0; x_1, \ldots, x_N; x_{N+1}) \), \( x_i \in \overline{\QQ} \) to framed mixed Tate motives to define  motivic iterated integrals \( I^{\mot}(x_0; x_1, \ldots, x_N; x_{N+1}) \), living in a graded  (by the weight \( N \)) connected Hopf algebra \( \mathcal{H}_\bullet= \mathcal{H}_\bullet(\overline{\QQ}) \), denoted  \( \mathcal{A}_\bullet (\overline{\QQ}) \) in   \cite{Go-galois}.  The coproduct \( \Delta \) on this Hopf algebra is computed via Theorem 1.2 in \cite{Go-galois} as
\begin{align*}
&  \Delta I^{\mot}(x_0; x_1,\ldots,x_N; x_{N+1}) \= {} \\
& \sum_{\substack{0 = i_0 < i_1 < \cdots \\ 
		< i_{k} < i_{k+1} = N+1}} I^{\mot}(x_0; x_{i_1}, \ldots, x_{i_k}; x_{N}) \otimes \prod_{p=0}^{k} I^\mot(x_{i_p}; x_{i_{p+1}}, \ldots, x_{i_{p+1}-1}; x_{i_{p+1}}).
\end{align*}
This is often stated mnemonically as a sum over all semicircular polygons, with the left hand factor corresponding to the main polygon, and the right hand factor corresponding to the product over all small cut-off polygons.  A typical term is given by the following picture:
\begin{center}
	\begin{tikzpicture}[style = {very thick}]
	\draw (3,0) arc [start angle=0, end angle = 180, radius=3]
	node[pos=0.0, name=x10, inner sep=0pt] {}
	node[pos=0.1, name=x9, inner sep=0pt] {}
	node[pos=0.2, name=x8, inner sep=0pt] {}
	node[pos=0.3, name=x7, inner sep=0pt] {}
	node[pos=0.4, name=x6, inner sep=0pt] {}
	node[pos=0.5, name=x5, inner sep=0pt] {}
	node[pos=0.6, name=x4, inner sep=0pt] {}
	node[pos=0.7, name=x3, inner sep=0pt] {}
	node[pos=0.8, name=x2, inner sep=0pt] {}
	node[pos=0.9, name=x1, inner sep=0pt] {}
	node[pos=1.0, name=x0, inner sep=0pt] {};
	\draw (-3.5,0) -- (3.5,0);
	\filldraw
	(x0) circle [radius=0.1, fill=black, red]
	node[above left] {$x_0$}
	(x1) circle [radius=0.1, fill=black]
	node[left] {$x_1$}
	(x2) circle [radius=0.1, fill=black]
	node[left] {$x_2$}
	(x3) circle [radius=0.1, fill=black]
	node[above left] {$x_3$}
	(x4) circle [radius=0.1, fill=black]
	node[above left] {$x_4$}
	(x5) circle [radius=0.1, fill=black]
	node[above] {$x_5$}
	(x6) circle [radius=0.1, fill=black]
	node[above right] {$x_6$}
	(x7) circle [radius=0.1, fill=black]
	node[above right] {$x_7$}
	(x8) circle [radius=0.1, fill=black]
	node[right] {$x_8$}
	(x9) circle [radius=0.1, fill=black]
	node[right] {$x_9$}
	(x10) circle [radius=0.1, fill=black]
	node[above right] {$x_{10}$};
	\draw [bend right=10, densely dotted] (x0) edge (x2);
	\draw [bend right=20, densely dotted] (x2) edge (x3);
	\draw [bend right=20, densely dotted] (x3) edge (x7);
	\draw [bend right=20, densely dotted] (x7) edge (x9);
	\draw [bend right=20, densely dotted] (x9) edge (x10);
	\end{tikzpicture}
\end{center}
It is often convenient to invoke the reduced coproduct \( \Delta' = \Delta - 1\otimes \id{} - \id{} \otimes 1 \).

\subsection{The mod-products symbol}

Recall from \cite[Section 4.4]{Go-galois}, the `$\otimes^N$-invariant', or \emph{symbol}, of an iterated integral \( I \) of weight $ N $.  The symbol \( \Symb(I) \) is an algebraic invariant of \( I \), which respects functional equations among iterated integrals.  It can be obtained by maximally iterating the \( (N-1, 1) \) part of the coproduct \( \Delta \), giving \( \Symb = \Delta^{[N]} \) in weight \( N \).  Recall also the projectors \( \Pi_\bullet \) from \cite[Section 5.5]{Du-Ga-Rh} which annihilate the symbols of products.

The composition \( N \Pi_N \circ \Symb \;\eqqcolon\; \Symbsh \) is the so-called \emph{mod-products symbol}.  (We prefer \( N \Pi_N \) to $ \Pi_N $ in order to avoid unnecessary scaling factors, at the expense of that operator being no longer idempotent.)  By considering how the projector \( N \Pi_N \) acts on \( \Symb \) when written via the iterated coproduct in both ways (iterating the \( (N-1,1) \)-part and the \( (1,N-1) \)-part respectively), we derive the following recursion
\begin{align*}
\Symbsh I^{\mot}(x_0; \ldots; x_{N+1})
 \= 
 \mathclap{\phantom{\sum^{N}}} \smash{\sum_{j=1}^N} % formatting
 \Symbsh I^{\mot}(x_0; x_1, \ldots, \widehat{x_j}, \ldots, x_N; x_{n+1}) &\otimes I^{\mot}(x_{j-1}; x_j; x_{j+1}) \\
 - \Symbsh I^{\mot}(x_1; x_2, \ldots, x_{N}; x_{N+1}) &\otimes I^{\mot}(x_0; \mathrlap{x_1}
 \phantom{x_N} % formatting
 ; x_{N+1}) \\
 - \Symbsh I^{\mot}(x_0; x_1, \ldots, x_{N-1}; x_{N}) &\otimes I^{\mot}(x_0; x_N; x_{N+1}) \, .
\end{align*}
Here \( I^{\mot}(a;b;c) \) is regularised (cf.~(6) in \cite{Go-galois}) as
\[
	I^{\mot}(a;b;c) = \begin{cases}
		\log^{\mot}(1) & \text{if $ a = b $ and $ b = c $\,,} \\
		\log^{\mot}(\tfrac{1}{b - a})  & \text{if $ a \neq b $ and $ b = c $\,,} \\
		\log^{\mot}(b - c)  & \text{if $ a = b $ and $ b \neq c $\,,} \\
		\log^{\mot}(\tfrac{b - c}{b - a})  & \text{otherwise\,.}
	\end{cases}
\]
As usual with symbols, we will drop the \( \log^{\mot} \) from the notation and write tensors multiplicatively.  We will also omit \( {}^{\mot} \) from the iterated integral and Nielsen polylogarithm notation from now on.

Note that on the symbol level we can ignore signs in the tensor factors, since we work modulo 2-torsion, so we can identify  \( \otimes (-x) \) and  \( \otimes x \).  To emphasise that certain identities hold only on the level of the mod-products symbol, we shall write \( f \modsh g \) to mean \( \Symbsh f = \Symbsh g \).

\subsection{Lie coalgebra}

The coproduct induces a cobracket \( \delta = \Delta - \Delta^{\mathrm{op}} \), with \( \Delta^{\mathrm{op}} \) the opposite coproduct, on the Lie coalgebra of irreducibles
\[
\mathcal{L}_\bullet \ceq \mathcal{H}_{>0} / \mathcal{H}_{>0}^2 \, .
\]
We use the notation \( \{z\}_m \) for elements in the weight \( m \) pre-Bloch group \( B_m(F) \) (also called `polylogarithmic group' in the literature), where $F$ is any field. For a rigorous definition 
of \(B_m(F)\) see~\cite{Go1} \S1.9, but roughly one can think about it as the 
quotient space of formal linear combinations of elements of $F$ modulo the subspace 
given by specialisations of all functional equations for $\Li_m$. The 
conjectural structure of \( \mathcal{L}_\bullet \) implies that \( B_m(F) \) is 
isomorphic to a subgroup of \( \mathcal{L}_m(F) \), and hence one can think 
about \( \{z\}_m \) as the image of \( \Li_m(z) \) modulo products. 
In Section~\ref{sec:blochgroupids} we will also work with higher 
Bloch groups $\mathcal{B}_m(\QQ)$. These were originally defined (for
number fields) in~\cite{Za1}.
One can define $\mathcal{B}_m(F)$  as the kernel of $\delta$ restricted to the pre-Bloch group $B_m(F)$.

The 2-part of the cobracket in weight~\( N \), i.e. the projection to  \( \bigoplus_{k=2}^{N-2} \mathcal{L}_{k} \oplus \mathcal{L}_{N-k} \), is seen to annihilate all classical polylogarithms.  Conjecture 1.20 and Section 1.6 in \cite{Go-ams} on the structure of the motivic Lie coalgebra would imply that the kernel of the motivic cobracket should coincide with classical polylogarithms.

For example, up to weight 3 the 2-part vanishes identically for trivial reasons.  This corresponds to the fact that in weight 3 every iterated integral can be expressed in terms of the classical trilogarithm \( \Li_3 \), which is well known already from  Equation A.3.5 in (the appendix of) \cite{Le}, and given in a somewhat different form in \cite{hype5}.   The next case is weight 4, in which Goncharov predicted that
\[
  I_{3,1}(V(x,y), z) = 0 \Mod{$\Li_4$} \,.
\]
Here $I_{3,1}(x,y) = I(0; x, 0, 0, y; 1) $ and $ V(x,y) $ is any version of the five-term relation, such as in \eqref{eq:fiveterm} above.
This was established by the second author in \cite{gangl2016multiple}, and was subsequently also shown by Goncharov and Rudenko in \cite{goncharov2018motivic} where it played a key role in the proof of Zagier's polylogarithm conjecture for weight 4.

In weight 5, one of the predictions is that
\begin{equation}\label{eq:i41fiveterm}
 I_{4,1}^+(V(x,y), z) = 0 \Mod{$\Li_5$} \,,
\end{equation}
and this reduction is expected to play a similarly important role in any proof of Zagier's polylogarithm conjecture for weight 5. It follows from the special case of \eqref{eq:i41fiveterm} at $z=1$, that one also expects 
\[
 S_{3,2}(V(x,y)) = 0 \Mod{$\Li_5$} \,,
\]
which we prove in Theorem \ref{thm:s32fiveterm} below.

\section{General properties of Nielsen polylogarithms}
In this section we recall the basic two-term relations for Nielsen polylogarithms of arbitrary weight (Propositions \ref{prop:nielsenreflection} and \ref{prop:nielseninverse}), reduce \( S_{2,2} \) to \( \Li_4 \) (Proposition \ref{prop:s22reduction}) and determine a basis of the space of mod-products symbols for Nielsen polylogarithms of a given weight (Theorem \ref{thm:nielsendepth}).

\subsection{General relations}

We first recall the differential behaviour of Nielsen polylogarithms, which can be used to verify some of the identities we give later.  The behaviour follows by differentiating the integral \eqref{eqn:nielsendef} defining \( S_{n,p} \).

\begin{Prop}[Derivative, Equation 2.11 in \cite{Ko2}]
	For \( n,p \in \ZZ_{>0} \), the Nielsen polylogarithm \( S_{n,p}(z) \) satisfies the differential equation
	\[
	\Big( z \frac{d}{dz} \Big) S_{n,p}(z) \= S_{n-1,p}(z) \, .
	\]
	We use the convention \( S_{0,p}(z) = \frac{(-1)^p}{p!} \log^{p}(1-z) = 
	\frac{1}{p!} \Li_1^p(z) \), via the iterated integral definition 
	\eqref{eq:nielsenasLi}.
\end{Prop}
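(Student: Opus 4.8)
The plan is to prove the identity directly from the defining integral \eqref{eqn:nielsendef}, since a single integration by parts suffices to lower the first index. As only the factor $\log^p(1-zt)$ depends on $z$, I would first differentiate under the integral sign (justified by dominated convergence on compact subsets of the domain where $\log(1-zt)$ is analytic in $z$, the integrand and its $z$-derivative being integrable against $\log^{n-1}(t)\,dt/t$ near $t=0$ and $t=1$). Computing $z\frac{d}{dz}\log^p(1-zt) = -\frac{pzt}{1-zt}\log^{p-1}(1-zt)$, the factor of $t$ cancels the $\tfrac1t$ in the measure, and the key observation is that the resulting integrand is a total $t$-derivative,
\[
\frac{z}{1-zt}\log^{p-1}(1-zt) \= -\frac{1}{p}\,\frac{d}{dt}\log^{p}(1-zt)\,.
\]
This turns $z\frac{d}{dz}S_{n,p}(z)$ into $\frac{(-1)^{n+p+1}}{(n-1)!\,p!}\int_0^1 \log^{n-1}(t)\,\frac{d}{dt}\log^p(1-zt)\,dt$.

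Next I would integrate by parts, moving $\frac{d}{dt}$ from $\log^p(1-zt)$ onto $\log^{n-1}(t)$, whose derivative $(n-1)\log^{n-2}(t)\,dt/t$ reintroduces exactly the measure $dt/t$ needed to recover a Nielsen integral. The boundary term $\big[\log^{n-1}(t)\log^p(1-zt)\big]_0^1$ is where the two regimes split: for $n\geq 2$ it vanishes at $t=1$ because $\log^{n-1}(1)=0$, and at $t=0$ because $\log^p(1-zt)\sim(-zt)^p$ decays faster than $\log^{n-1}(t)$ grows; the bulk term then survives, and after collecting the factor $-(n-1)$ against $(n-1)!$ and checking the sign identity $(-1)^{n+p+1}\cdot(-1)=(-1)^{n+p-2}$, it matches the definition of $S_{n-1,p}(z)$ term by term. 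For the base case $n=1$ there is nothing to integrate by parts: the integrand is already $\frac{d}{dt}\log^p(1-zt)$, and evaluating this total derivative over $[0,1]$ yields precisely $\frac{(-1)^p}{p!}\log^p(1-z)$, which is the stipulated value $S_{0,p}(z)$.

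The cleanest conceptual cross-check, which I would add as a remark, comes from the iterated-integral representation \eqref{eq:nielsenasLi}: differentiating $I(0;\{1\}^p,\{0\}^n;z)$ in its upper endpoint peels off the outermost one-form, which is $\tfrac{dt}{t}$ precisely because the final entry of the word is $0$, instantly giving $z\frac{d}{dz}S_{n,p}=S_{n-1,p}$ together with the convention at $n=1$. I expect the only genuine subtlety to be the boundary-term analysis and the clean separation of the $n=1$ case from $n\geq 2$; the sign and factorial accounting is routine once the total-derivative rewriting above is in place, and the justification for differentiating under the integral is standard.
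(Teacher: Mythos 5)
Your proposal is correct and takes essentially the same route the paper indicates: the paper offers no written proof beyond the remark that the identity "follows by differentiating the integral \eqref{eqn:nielsendef}", and your argument simply carries out that differentiation in full, with the total-derivative rewriting, the integration by parts for $n\ge 2$, and the separate treatment of $n=1$ all checking out (including the sign and factorial bookkeeping). The iterated-integral cross-check you append likewise matches the paper's own alternative viewpoint via \eqref{eq:nielsenasLi}.
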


Nielsen already established a general inversion and reflection relation for the Nielsen polylogarithms.

\begin{Prop}[Reflection, Section 5.1 in \cite{Ko2}]\label{prop:nielsenreflection}
	For all \( z \in \CC \sm (-\infty, 0] \cup [1, \infty) \), and all \( n, p \in \ZZ_{>0} \), we have
	\begin{align*}
			S_{n,p}(1-z) \={} & \frac{(-1)^p}{n! \, p!} \log^n(1-z)\log^p(z) \\*
					& {} + \sum_{j=0}^{n-1} \frac{\log^j(1-z)}{j!} \Bigg( 
				S_{n-j,p}(1) - \sum_{k=0}^{p-1} \frac{(-1)^k \log^k(z)}{k!} S_{p-k,n-j}(z) \Bigg) \, .
	\end{align*}
	In particular, after neglecting products, one has an expression for \[
		S_{p,n}(z) =  -S_{n,p}(1-z) + S_{n,p}(1) \Mod{products}\, .
	\]
\end{Prop}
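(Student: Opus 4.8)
The plan is to obtain the mod-products identity as a direct consequence of the displayed reflection formula, which I take as given (it is Nielsen's / K\"olbig's classical reflection relation, cited as Section 5.1 in \cite{Ko2}). Should one want a self-contained derivation of that formula, the cleanest route is induction on \( n \): the base case \( n=0 \) is the defining convention \( S_{0,p}(z) = \frac{(-1)^p}{p!}\log^p(1-z) \), and the inductive step follows by differentiating in \( z \), applying the derivative relation \( (z\frac{d}{dz})S_{n,p}(z) = S_{n-1,p}(z) \), and fixing the integration constants at a convenient base point; alternatively one can argue from the iterated-integral representation \( S_{n,p} = (-1)^p I(0;\{1\}^p,\{0\}^n;z) \). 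For the corollary itself, however, one only needs the formula as stated, and the argument is pure weight bookkeeping. First I would recall that working ``modulo products'' means working in the Lie coalgebra \( \mathcal{L}_\bullet = \mathcal{H}_{>0}/\mathcal{H}_{>0}^2 \), so that any term lying in \( \mathcal{H}_{>0}^2 \), i.e.\ any product of two factors each of strictly positive weight, is to be discarded, while genuine constants such as the multiple zeta value \( S_{n,p}(1) \) (not in general a product of lower-weight constants) are retained.

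The whole derivation is then a term-by-term inspection of the weights on the right-hand side of the reflection formula. The leading term \( \frac{(-1)^p}{n!\,p!}\log^n(1-z)\log^p(z) \) is a product of logarithms, each of weight \( 1 \), hence lies in \( \mathcal{H}_{>0}^2 \) and drops. In the outer sum, for \( 1\le j\le n-1 \) each summand is \( \frac{\log^j(1-z)}{j!} \), of weight \( j\ge 1 \), times a bracket of weight \( (n-j)+p\ge p\ge 1 \) (here \( n-j\ge1 \)), so it too is a product and drops; only the \( j=0 \) term survives. Inside that term the inner sum runs over \( 0\le k\le p-1 \), and for \( 1\le k\le p-1 \) each summand is \( \frac{(-1)^k\log^k(z)}{k!} \), of weight \( k\ge1 \), times \( S_{p-k,n}(z) \), of weight \( (p-k)+n\ge n\ge1 \) (here \( p-k\ge1 \)), hence again a product to be discarded. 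The only surviving contributions are the constant \( S_{n,p}(1) \) and the \( k=0 \) term \( -S_{p,n}(z) \).

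Collecting the survivors gives \( S_{n,p}(1-z)\equiv S_{n,p}(1) - S_{p,n}(z) \Mod{products} \), and solving for \( S_{p,n}(z) \) yields the claim. As a sanity check, the smallest case \( n=p=1 \) reads \( S_{1,1}(z) = -S_{1,1}(1-z)+S_{1,1}(1) \), i.e.\ the Euler reflection \( \Li_2(z)+\Li_2(1-z)=\zeta(2) \) modulo the product \( \log(z)\log(1-z) \), exactly as expected. The only point needing care, the mild ``obstacle'', is the weight accounting that certifies every discarded summand as a genuine product of two positive-weight factors rather than a lower-weight function times a weight-\( 0 \) scalar: this is precisely what the inequalities \( n-j\ge1 \) (for \( j\le n-1 \)) and \( p-k\ge1 \) (for \( k\le p-1 \)) guarantee over the stated summation ranges, so that all the companion Nielsen factors \( S_{n-j,p}(1) \), \( S_{p-k,n-j}(z) \) and \( S_{p-k,n}(z) \) carry strictly positive weight and the lone constant \( S_{n,p}(1) \) is legitimately retained.
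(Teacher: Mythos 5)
Your proposal is correct and takes essentially the same route as the paper: the paper also imports the displayed reflection formula from K\"olbig (remarking that it follows alternatively from functoriality, shuffle product and path deconcatenation of iterated integrals, as in your sketch) and leaves the mod-products consequence as exactly the weight bookkeeping you spell out. One cosmetic caveat: \( S_{n,p}(1) \) is not in general product-free (e.g.\ \( S_{3,2}(1) = 2\zeta(5) - \zeta(2)\zeta(3) \)), but since the stated congruence simply retains it as written, this does not affect your argument.
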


This also follows directly from the functoriality, shuffle product and path deconcatenation properties of iterated integrals \cite{Chen}.

\begin{Rem}
	From \eqref{eq:nielsenasLi}, we have \( S_{n,p}(1) = (-1)^p I(0; \{1\}^p, \{0\}^n; 1) =  \zeta(\{1\}^{p-1}, n+1) \), where \( \zeta(k_1,\ldots,k_r) \ceq \Li_{k_1,\ldots,k_r}(1,\ldots,1) \) is a multiple zeta value.  From \cite[Equation 10]{BBB}, the following generating function expansion
	\[
		\sum_{m,n\geq0} x^{m+1} y^{n+1} \zeta(\{1\}^n,m+2) \= 1 - \exp \bigg(\sum_{k\geq2} \frac{1}{k} \big(x^k +y^k - (x+y)^k\big) \zeta(k) \bigg) 
	\]
	shows that this class of MZV's are polynomials in the Riemann zeta values \( \zeta(q) \).
\end{Rem}

\begin{Prop}[Inversion, Section 5.3 in \cite{Ko2}]\label{prop:nielseninverse}
	For all \( z \in \mathbb{C} \sm [0, \infty) \), and all \( n, p \in \ZZ_{>0} \), we have
	\begin{align*}
		S_{n,p}\Big(\frac{1}{z}\Big) \= {} 
			& (-1)^n \sum_{k=0}^{n} (-1)^ k \sum_{m=0}^{k} \frac{\log^m(-z^{-1})}{m!} \binom{n+k-m-1}{k-m} S_{n+k-m,p-k}(z) \\
			& + (-1)^p \bigg( \frac{\log^{n+p}(-z^{-1})}{(n+p)!} + \sum_{j=0}^{n-1} \frac{\log^j(-z^{-1})}{j!} C_{n-j,p} \bigg) \, ,
	\end{align*}
	where \( C_{n,p} \) is some explicit homogeneous polynomial in \( \pi^2 \) and \( S_{a,b}(1) = \zeta(\{1\}^{a-1}, b+1) \), so is a polynomial in Riemann zeta values \( \zeta(q) \).
	
	In particular, 
	\[
	 S_{n,p}\Big(\frac{1}{z}\Big) - (-1)^n S_{n,p}(z) 
	\]
	reduces to lower depth and products.
\end{Prop}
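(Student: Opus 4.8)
The plan is to exploit the iterated integral representation $S_{n,p}(z) = (-1)^p I(0; \{1\}^p, \{0\}^n; z)$ from \eqref{eq:nielsenasLi} together with the functoriality of iterated integrals under the M\"obius involution $\mu\colon t \mapsto 1/t$, in the same spirit as the argument sketched after Proposition \ref{prop:nielsenreflection}. Pulling back the defining one-forms along $\mu$ gives
\[
\mu^*\frac{dt}{t} = -\frac{du}{u}, \qquad \mu^*\frac{dt}{t-1} = \frac{du}{u-1} - \frac{du}{u},
\]
while $\mu$ sends the endpoints $0 \mapsto \infty$ and $1/z \mapsto z$, keeping the singular set inside $\{0,1\}$. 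Hence $S_{n,p}(1/z)$ becomes, up to sign, an iterated integral from $\infty$ to $z$ whose integrand is a word in the two forms $\frac{du}{u}$ and $\frac{du}{u-1}$.

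Next I would expand by multilinearity: each of the $p$ letters $\frac{du}{u-1}-\frac{du}{u}$ splits, yielding a $\QQ$-linear combination of standard iterated integrals $I(\infty; w; z)$ with $w$ a word in $\{0,1\}$. I would then shift the base point from the singular point $\infty$ to $0$ via the path-composition (deconcatenation) formula $I(\infty; w; z) = \sum_{w = w_1 w_2} I(\infty; w_1; 0)\,I(0; w_2; z)$, using tangential base points to regularise at the coincident singular endpoints $0$ and $\infty$. The tails $I(0; w_2; z)$ whose words have Nielsen shape $\{1\}^a\{0\}^b$ reassemble, through \eqref{eq:nielsenasLi}, into the terms $S_{n+k-m,p-k}(z)$, and the binomial $\binom{n+k-m-1}{k-m}$ should emerge as the count of interleavings of the $m$ extra $\frac{du}{u}$-letters (coming from $\mu^*\frac{dt}{t-1}$) with the block of $n$ genuine $\frac{du}{u}$-letters. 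The regularisation at $u=\infty$ and at the endpoint $z$ produces the powers $\log^m(-z^{-1})/m!$, while the prefactors $I(\infty; w_1; 0)$ are regularised multiple zeta values living on the path between the two singular points with poles only in $\{0,1\}$; these should reduce to a polynomial in $\pi^2$ together with the boundary values $S_{a,b}(1) = \zeta(\{1\}^{a-1},b+1)$, which is precisely the asserted shape of $C_{n,p}$.

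As an independent route (and in order to pin down the constants cleanly) I would run an induction on $n$ using the differential equation. Since the chain rule and the derivative proposition give $z\frac{d}{dz}S_{n,p}(1/z) = -S_{n-1,p}(1/z)$, and since a short computation shows that $z\frac{d}{dz}$ sends the claimed right-hand side $R_{n,p}$ to $-R_{n-1,p}$ — the coefficient identity being exactly Pascal's rule $\binom{n+k-m-1}{k-m} = \binom{n+k-m-2}{k-m} + \binom{n+k-m-2}{k-m-1}$ together with an index shift on the $\log$-powers — it suffices to verify one base case and to determine, at each $n$, the single integration constant $C_{n,p}$ annihilated by $z\frac{d}{dz}$. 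That constant can be fixed by evaluating at $z=-1$, where $\log(-z^{-1})=0$ collapses the formula to a finite relation among the level-two values $S_{a,b}(-1)$.

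The hard part will not be the algebra but the analysis at the two coincident singular points $0$ and $\infty$: one must choose compatible tangential base points and shuffle-regularisations so that the logarithmic (branch) contributions assemble exactly into the stated powers of $\log(-z^{-1})$, and one must correctly account for the degenerate tails $S_{a,0}(z)$, which are pure powers of $\log z$ and have to be reabsorbed into the explicit $\log^{n+p}(-z^{-1})/(n+p)!$ term rather than left as separate Nielsen contributions (a naive bookkeeping of these terms already spoils the $\Li_2$ inversion, so this step is genuinely delicate). Equivalently, in the inductive approach the entire difficulty is concentrated in proving that each constant $C_{n,p}$ really is a polynomial in $\pi^2$ and zeta values, rather than an arbitrary period.
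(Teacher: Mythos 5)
The paper offers no proof of this proposition at all: it is imported verbatim from K\"olbig (Section 5.3 of \cite{Ko2}), and the authors' only methodological comment in this direction is the one-line remark after Proposition \ref{prop:nielsenreflection} that such two-term relations ``follow directly from the functoriality, shuffle product and path deconcatenation properties of iterated integrals''. Your first route is exactly an attempt to flesh out that remark, so the choice of toolkit is the right one; but as written it has a concrete gap. When you expand the pulled-back word \( \big(\tfrac{du}{u-1}-\tfrac{du}{u}\big)^{\otimes p}\otimes\big(\tfrac{du}{u}\big)^{\otimes n} \) by multilinearity, the \(k\) chosen letters \( \tfrac{du}{u} \) stay in their original slots, \emph{interleaved among the remaining} \( \tfrac{du}{u-1} \)'s; a typical resulting word is \(1\,0\,1\,1\,0\,0^n\), not \( \{1\}^{p-k}\{0\}^{n+k} \). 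Consequently the suffixes appearing in the deconcatenation \( I(\infty;w;z)=\sum_{w=w_1w_2}I(\infty;w_1;0)\,I(0;w_2;z) \) are general multiple polylogarithms, not Nielsen ones, and your description of the binomial coefficient as ``the count of interleavings of the \(m\) extra letters with the block of \(n\) genuine \( \tfrac{du}{u} \)-letters'' silently slides the extra \(0\)'s into the trailing block — a shuffle-type manipulation that is valid for the integral of a shuffle \emph{product}, not for an individual concatenation word. Making the non-Nielsen tails cancel or recombine (via shuffle/regularisation identities, or by reorganising the whole computation as a generating-series identity, which is essentially what K\"olbig does) is the actual content of the proof, and it is missing.

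Your second, ODE-based route is the more robust one and the differentiation bookkeeping you describe is correct (\(z\frac{d}{dz}\) lowers \(n\) by one on both sides, Pascal's rule handles the cross terms, and only the \(j=0\) constant \(C_{n,p}\) is new at each step). But note that the stated formula does \emph{not} specialise correctly to \(n=0\): expanding \( S_{0,p}(1/z)=\frac{(-1)^p}{p!}\log^p\!\big(\tfrac{z-1}{z}\big) \) binomially produces cross terms \( \frac{\log^m(-z^{-1})}{m!}S_{0,p-m}(z) \) for all \(0\le m\le p\), whereas the right-hand side of the proposition at \(n=0\) retains only the \(m=0\) and \(m=p\) extremes. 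So the induction needs a genuine base case at \(n=1\) (or a two-parameter induction), which is itself not trivial; and evaluating at \(z=-1\) only expresses \(C_{n,p}\) through the alternating values \(S_{a,b}(-1)\), which does not by itself establish the asserted shape of \(C_{n,p}\) as a polynomial in \(\pi^2\) and the \(S_{a,b}(1)\) — a point you rightly flag but do not resolve.
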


\subsection{\texorpdfstring{Nielsen polylogarithms in weight $\le 4$}
{Nielsen polylogarithms in weight < 4}}
In weights up to $4$, Nielsen polylogarithms give the same class of functions
as the classical polylogarithms $\Li_m$.  More precisely, in weight 2, we only have \( S_{1,1} = \Li_2 \).  At weight 3, \( S_{2,1} = \Li_3 \) and already \( S_{1,2} \) is expressible in terms of \( \Li_3 \) by the reflection in Proposition \ref{prop:nielsenreflection}.

At weight 4, \( S_{1,3} = \Li_4 \) and \( S_{3,1} \) is also expressible in terms of  \( \Li_4 \) by reflection.  The function \( S_{2,2} \) is potentially new, but it too can be reduced to \( \Li_4 \)'s.  In \cite[Section 6]{Ko2}, K\"olbig notes that one can in principle find a `complicated' expression for \( S_{2,2}(z) \) in terms of polylogarithms, by studying the formulae for \( S_{n,p} \) under the 6 anharmonic transformations.  He references an equation in \cite[p. 204]{Le}, from which such a formula could also be derived.

K\"olbig perhaps overstates the complexity of this formula, and of the manner in which it should be derived.  Note that from Proposition \ref{prop:nielseninverse}, we have the following identity
\[
	S_{1,3}(z^{-1}) \= - S_{1,3}(z) +  S_{2,2}(z)-  S_{3,1}(z) \Mod{products}
\]
So immediately \( S_{2,2}(z) \) can be expressed in terms of the other weight 4 Nielsen polylogs and products.  Applying reflection to write \( S_{1,3}(z) = S_{3,1}(z) = \Li_4(z) \Mod{products} \), gives a reduction in depth to \( \Li_4 \).  Wojtkowiak already gives a version of this reduction in \cite[Equation 8.3.7]{WoMHS} for some single-valued analogue of \( S_{2,2} \).

\begin{Prop}[Reduction of \( S_{2,2} \)]\label{prop:s22reduction}
	The function \( S_{2,2}(z) \) can be reduced to the classical \( \Li_4 \), and products of lower weight classical polylogarithms, as follows.  For all \( z \in \CC \sm (-\infty, 0] \cup [1, \infty) \), we have
	\begin{align*}
	S_{2,2}(z) \={} & 
	{} -\Li_4(1-z)
	+\Li_4(z)
	+\Li_4\Big(\frac{z}{z-1}\Big) 
	-\Li_3(z) \log (1-z) \\
	& {} +\frac{1}{4!} \log ^4(1-z)
	 -\frac{1}{3!} \log (z) \log ^3(1-z) \\
	 & {} + \frac{1}{2!} \zeta(2) \log^2(1-z)
	 + \zeta (3) \log (1-z) 
	 +\zeta(4)\,.
	\end{align*}
	
	\begin{proof}
	This identity can be verified by differentiation, and checking the resulting weight 3 combination is identically 0.  Since \( S_{2,2}(0) = 0 \), the constant of integration is fixed to \( \zeta(4) \) to ensure the right hand side also vanishes at \( z = 0 \).
	\end{proof}
\end{Prop}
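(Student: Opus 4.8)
The plan is to prove the identity by showing that both sides satisfy the same first-order differential equation and agree at a single point. Write $F(z)$ for the right-hand side, and set $D \coloneqq z\frac{d}{dz}$. By the differential relation $z\frac{d}{dz}S_{n,p}(z) = S_{n-1,p}(z)$ of the first Proposition of this section, we have $D S_{2,2}(z) = S_{1,2}(z)$, so it suffices to establish the weight $3$ identity $D F(z) = S_{1,2}(z)$ together with the boundary condition $F(0) = S_{2,2}(0)$. The boundary condition is immediate: from \eqref{eqn:nielsendef} one has $S_{2,2}(0) = 0$, since the integrand carries a factor $\log^2(1-zt)$ vanishing at $z=0$; and as $z \to 0$ every term of $F$ containing $\log(1-z)$, $\Li_4(z)$, or $\Li_4\big(\frac{z}{z-1}\big)$ tends to $0$, leaving only $-\Li_4(1) + \zeta(4) = -\zeta(4) + \zeta(4) = 0$. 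This is precisely the computation that pins the additive constant to $\zeta(4)$.

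The substance of the proof is the weight $3$ identity $D F(z) = S_{1,2}(z)$. I would compute $D F$ term by term, using $\frac{d}{dw}\Li_4(w) = \Li_3(w)/w$ together with the chain rule for the composite arguments $1-z$ and $\frac{z}{z-1}$; the three $\Li_4$ terms then contribute trilogarithms at $z$, $1-z$ and $\frac{z}{z-1}$, while the remaining terms contribute a $\Li_3(z)\,$ and products of logarithms. To verify that the resulting weight $3$ combination equals $S_{1,2}(z)$ there are two natural routes. One can reduce both sides to the classical $\Li_3$ modulo products — every weight $3$ iterated integral admits such a reduction, as recalled in Section~\ref{sec:symbmot} — and match the results; the cleanest form of this check is on the level of the mod-products symbol, after which the explicit product terms are recovered from the differential relation. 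Alternatively, one can apply $D$ once more: since $D S_{1,2}(z) = S_{0,2}(z) = \tfrac{1}{2}\log^2(1-z)$, it is enough to verify the elementary weight $2$ identity $D^2 F(z) = \tfrac{1}{2}\log^2(1-z)$, differentiating further down to a rational-function identity if one wants to avoid all transcendental bookkeeping.

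The main obstacle is not conceptual but a matter of careful bookkeeping: correctly tracking the chain rule through the arguments $1-z$ and $\frac{z}{z-1}$, and recognising the resulting combination as $S_{1,2}$. When one integrates the differential relation back up, the other subtlety is the kernel of $D$ (respectively $D^2$), which is spanned by the constants (respectively by $1$ and $\log z$); the potential $\log z$ ambiguity is eliminated by observing that $S_{2,2}(z) - F(z)$ is regular at $z=0$, and the remaining constant is fixed by the boundary value above. Working modulo products via the symbol makes the core check mechanical and sidesteps the multivalued-branch issues, while the explicit product terms and the single constant $\zeta(4)$ are then determined by $D S_{2,2} = S_{1,2}$ and $S_{2,2}(0) = 0$.
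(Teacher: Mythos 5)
Your proposal is correct and follows essentially the same route as the paper's proof: differentiate, verify that the resulting weight~$3$ combination vanishes identically, and fix the constant of integration via $S_{2,2}(0)=0$ together with $F(0)=-\Li_4(1)+\zeta(4)=0$. The extra detail you supply (iterating $z\tfrac{d}{dz}$ down to weight~$2$, or checking the weight~$3$ step on the mod-products symbol, and controlling the kernel of the differential operator) is a sensible elaboration of the same argument rather than a different one.
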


The same strategy also reduces \( S_{n,n}(z) \) to lower depth Nielsen polylogarithms.  Moreover, we can determine a spanning set for weight \( N \) Nielsen polylogarithms, as follows.

\subsection{\texorpdfstring{Generators for Nielsen polylogarithms}
	{Generators for Nielsen polylogarithms in weight N}}
\label{sec:spanningset}
	
We first state a lemma about the mod-products symbols of Nielsen polylogarithms.
	
\begin{Lem}\label{lem:snpmodsymb}
	The mod-products symbol of \( S_{n,p}(z) \), \( n, p > 0 \), is given by
	\[
		\Symbsh(S_{n,p}(z)) = - (1-z)\wedge z \otimes \Big( (1-z)^{\otimes p-1} 
		\shuffle  z^{\otimes n-1} \Big) \, ,
	\]
	where \( a \wedge b = a \otimes b - b \otimes a \), and \( \shuffle \) is the shuffle product of tensors, recursively defined on words via
	\[
		\big(a \otimes w_1\big) \shuffle \big(b \otimes w_2\big) = a \otimes \big( w_1 \shuffle \big(b \otimes w_2\big) \big) + b \otimes \big( \big( a \otimes w_1 \big) \shuffle w_2 \big) \, ,
	\]
	with the empty word \( 1 \) satisfying \( w \shuffle 1 = 1 \shuffle w = w \).
	
	\begin{proof}
		The cases where \( n = 0 \) or \( p = 0 \) are trivially 0, since the Nielsen polylog reduces to a product.  When \( n = 1 \), or \( p = 1 \), the result follows still by applying the recursive definition, taking into account that certain terms are trivial now. \medskip
		
		For \( S_{n,p}(z) \), only the terms \( \widehat{z_{n+p}} \) and \( \widehat{z_0} \) contribute in the recursion, so
		\begin{align*}
			\Symbsh S_{n,p}(z)
			 \=  & (-1)^p \Symbsh I(0; \{1\}^n, \{0\}^p; z) \\
			 \=  & \begin{aligned}[t]
				(-1)^p \Big(
				 & \Symbsh I(0; \{1\}^p, \{0\}^{n-1}; z) \otimes z \\ 
				 & {} - \Symbsh I(1; \{1\}^{p-1}, \{0\}^n; z) \otimes (1-z)
				\Big) \, .
			\end{aligned}
		\end{align*}
		Modulo products and constants we can deduce \( I(1; \{1\}^{p-1}, \{0\}^n; z) = I(0; \{1\}^{p-1}, \{0\}^n; z) \) by splitting the integration path at 0.  So
		\begin{align}\label{snprecursion}
			\Symbsh S_{n,p}(z)
			\= & \Symbsh S_{n-1,p}(z) \otimes z + \Symbsh S_{n,p-1}(z) \otimes (1-z) \\
			\= & \begin{aligned}[t]
				 -(1-z) \wedge z \otimes \Big( & ((1-z)^{\otimes p-1} \shuffle z^{\otimes n-2}) \otimes z \\
				& {} + ((1-z)^{\otimes p-2} \shuffle z^{\otimes n-1}) \otimes (1-z)  \Big) \, .
			\end{aligned}
		\end{align}
		By the recursive definition of the shuffle product \( \shuffle \), we obtain the result.
	\end{proof}
\end{Lem}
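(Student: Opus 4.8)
The plan is to turn the mod-products symbol recursion recorded above into a two-step recursion in the indices $(n,p)$, and then to close it by induction. First I would use the iterated integral presentation $S_{n,p}(z) \= (-1)^p I(0; \{1\}^p, \{0\}^n; z)$ from \eqref{eq:nielsenasLi}, so that the argument word consists of $p$ copies of $1$ followed by $n$ copies of $0$, with left endpoint $0$ and right endpoint $z$. The degenerate cases $n = 0$ or $p = 0$ are immediate: there $S_{n,p}$ is a scalar multiple of a power of a single logarithm, hence a product, so $\Symbsh S_{n,p} \= 0$, in agreement with the convention that a missing shuffle factor collapses the right-hand side.

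The crux is to show that, for $n,p>0$, only two of the terms produced by the recursion survive. Each interior deletion at position $j$ carries a factor $I(x_{j-1}; x_j; x_{j+1})$, and from the explicit regularisation of $I(a;b;c)$ every such factor in which $x_{j-1},x_j,x_{j+1}$ all agree, or which evaluates to $\log(\pm1)$, is zero modulo constants and $2$-torsion. Checking the four transitions between the block of $1$'s and the block of $0$'s, the only interior deletion whose factor survives is that of the final $0$: it contributes $I(0;0;z) \= \log(-z) \equiv \log z$ together with the reduced integral $I(0; \{1\}^p, \{0\}^{n-1}; z)$. Of the two endpoint-correction terms, the right-endpoint one produces the closed integral $I(0; \{1\}^p, \{0\}^{n-1}; 0)$ from $0$ to $0$, whose mod-products symbol vanishes; the left-endpoint one contributes $I(0;1;z) \= \log(1-z)$ together with $I(1; \{1\}^{p-1}, \{0\}^n; z)$. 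Splitting the path of this last integral at $0$, and discarding the resulting constant and product pieces (all annihilated by $\Symbsh$), lets me replace it by $I(0; \{1\}^{p-1}, \{0\}^n; z)$.

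Translating the two surviving integrals back into Nielsen polylogarithms via $S_{m,q} \= (-1)^q I(0; \{1\}^q, \{0\}^m; z)$, the prefactors combine to give the clean recursion
\[
 \Symbsh S_{n,p}(z) \= \Symbsh S_{n-1,p}(z) \otimes z + \Symbsh S_{n,p-1}(z) \otimes (1-z) \,,
\]
with base data $\Symbsh S_{n,0} \= \Symbsh S_{0,p} \= 0$ and $\Symbsh \Li_2(z) \= -(1-z)\wedge z$ for $S_{1,1}=\Li_2$. I would then induct on $n+p$: substituting the claimed closed form into the two terms gives
\[
 -(1-z)\wedge z \otimes \Big( \big((1-z)^{\otimes p-1} \shuffle z^{\otimes n-2}\big)\otimes z + \big((1-z)^{\otimes p-2} \shuffle z^{\otimes n-1}\big)\otimes (1-z) \Big) \,,
\]
and the bracketed expression is precisely the last-letter form of the shuffle recursion for $(1-z)^{\otimes p-1} \shuffle z^{\otimes n-1}$ (equivalent to the stated first-letter form by reversal). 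Terms containing a negative tensor power drop out because their coefficient $\Symbsh S_{0,p}$ or $\Symbsh S_{n,0}$ vanishes, so the edges $n=1$ and $p=1$ are covered automatically.

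The step I expect to be the main obstacle is the bookkeeping of the second paragraph: evaluating every regularised factor $I(a;b;c)$ correctly and verifying that, apart from the two named terms, all contributions vanish modulo products, constants and $2$-torsion. In particular the vanishing of the closed integral $I(0; \{1\}^p, \{0\}^{n-1}; 0)$ and the legitimacy of the path-splitting at $0$ (i.e.\ that the extra deconcatenation pieces really are constants or products, hence killed by $\Symbsh$) need to be argued with care, as does confirming that the boundary indices $n=1$ and $p=1$, where a neighbouring argument coincides with an endpoint, still fit the same pattern.
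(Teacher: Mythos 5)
Your proposal is correct and follows essentially the same route as the paper's proof: reduce to the iterated-integral recursion for $\Symbsh$, check that only the deletion of the final $0$ and the left-endpoint correction survive the regularisation (using path-splitting at $0$ to replace $I(1;\{1\}^{p-1},\{0\}^n;z)$ by $I(0;\{1\}^{p-1},\{0\}^n;z)$ modulo products and constants), and close the resulting recursion $\Symbsh S_{n,p}(z)=\Symbsh S_{n-1,p}(z)\otimes z+\Symbsh S_{n,p-1}(z)\otimes(1-z)$ by induction via the shuffle recursion. Your explicit bookkeeping of the regularised factors $I(a;b;c)$ and the remark that the vanishing of $\Symbsh S_{0,p}$ and $\Symbsh S_{n,0}$ automatically covers the boundary cases $n=1$, $p=1$ are, if anything, slightly more detailed than the paper's own treatment.
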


The above identities and mod-products symbol expressions show that \( \mathfrak{S}_3 \) acts on the set of Nielsen polylogarithms of anharmonic ratios.  The symbol of  the Nielsen polylogarithm \( S_{n,p}\) of  weight \( N=n+p \) is of the form \( \Symbsh(\Li_2(z)) \otimes ( (1-z)^{\otimes p-1} \shuffle z^{\otimes n-1} ) \), i.e.~is symmetric in the last \( N - 2 \) factors and antisymmetric in the first 2.  The representation of \( \mathfrak{S}_3 \) on Nielsen is then isomorphic to the representation of \( \mathfrak{S}_3 \) on 2-variable homogeneous polynomials of degree \( N - 2 \), under \( z \leftrightarrow X \) and \( 1-z \leftrightarrow Y \), and then tensored with the sign representation for the \( \Li_2(z) \) factor.  More precisely, under the following identification, relations among \( S_{n,p} \) and their associated polynomials correspond to each other in a bijective manner:
\begin{align*}
	 S_{n,p}(z) &{}\:\mapsto\: \phantom{+}\binom{N-2}{p-1} X^{n-1} Y^{p-1} \, , \\
	 S_{n,p}(1-z) &{}\:\mapsto\: -\binom{N-2}{p-1} Y^{n-1} X^{p-1} \, , \\
	 S_{n,p}\Big(1-\frac{1}{z}\Big) &{}\:\mapsto\: \binom{N-2}{p-1} (Y-X)^{n-1} (-X)^{p-1} \, .
\end{align*}

\begin{Thm}\label{thm:nielsendepth}
	Write \( d = \lfloor (N+1)/3 \rfloor \).  Then the following set forms a basis for the symbols of Nielsen polylogarithms of weight \( N \) modulo products, under the anharmonic ratios:
	\[
		\mathcal{B} \= \{ S_{N-i, i}(z), S_{N-i, i}(1-z), S_{N-i, i}(1-z^{-1}) \}_{i=1}^{d - 1} \:\cup\: \mathcal{X}^{d}_N \, ,
	\]
	where
	\[
		\mathcal{X}^i_N \= \begin{cases}
			\{ S_{N-i, i}(z) \} & \text{if \( N \equiv -1 \pmod{3} \)\,,} \\
			\{ S_{N-i, i}(z), S_{N-i, i}(1-z) \} & \text{if \( N \equiv \phantom{+}0 \pmod{3} \)\,,}  \\
            \{ S_{N-i, i}(z), S_{N-i, i}(1-z), S_{N-i, i}(1-z^{-1}) \} & \text{if \( N \equiv \phantom{+}1 \pmod{3} \)\,.}
		\end{cases}
	\]
	In particular, depth \( d = \lfloor (N+1)/3 \rfloor \) suffices to generate all the Nielsen polylogarithms of weight \( N \) modulo products.
\end{Thm}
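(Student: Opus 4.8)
The plan is to reduce the statement entirely to linear algebra in the space $V := \Sym^{N-2}$ of homogeneous polynomials of degree $m := N-2$ in $X,Y$, via Lemma~\ref{lem:snpmodsymb} and the $\mathfrak{S}_3$-identification recorded just above the theorem: modulo products, the symbols of the weight-$N$ Nielsen polylogarithms at the six anharmonic arguments span $V$, which has dimension $N-1$. A direct count shows $|\mathcal{B}| = N-1$ in each of the three residue classes of $N$ modulo $3$ (using $d = \lfloor (N+1)/3\rfloor$), so it suffices to prove that the images of $\mathcal{B}$ form a basis of $V$; as $|\mathcal{B}| = \dim V$, either linear independence or spanning will do.

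Under the identification, the three arguments appearing in $\mathcal{B}$ send $S_{N-i,i}(z)$ to $\binom{N-2}{i-1}X^{m-(i-1)}Y^{i-1}$, send $S_{N-i,i}(1-z)$ to $-\binom{N-2}{i-1}X^{i-1}Y^{m-(i-1)}$, and send $S_{N-i,i}(1-z^{-1})$ to $\binom{N-2}{i-1}(-1)^{i-1}(Y-X)^{m-(i-1)}X^{i-1}$. I would split the monomial basis $\{X^{m-l}Y^l\}_{l=0}^m$ of $V$ by $Y$-degree into a ``low'' block, a ``high'' block, and a ``middle'' block, and check case-by-case in $N \bmod 3$ (this is where the shape of $\mathcal{X}_N^d$ enters) that the $z$-images are exactly nonzero multiples of the low monomials, the $(1-z)$-images exactly nonzero multiples of the high monomials, that the low and high $Y$-degree ranges are disjoint, and that the number of $(1-z^{-1})$-images equals the number of middle monomials. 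Setting $U := \langle \text{low}, \text{high}\rangle$ and $W := \langle \text{middle}\rangle$, so that $V = U \oplus W$, the $z$- and $(1-z)$-images then form a basis of $U$, and by the standard splitting criterion $\mathcal{B}$ is a basis of $V$ if and only if the $(1-z^{-1})$-images project to a basis of $W \cong V/U$.

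The crux is therefore to show these projections are independent. Writing $j = i-1$, the coefficient of the middle monomial $X^{m-l}Y^l$ in $(-1)^j(Y-X)^{m-j}X^j$ equals $(-1)^{m-l}\binom{m-j}{l}$, a sign depending only on $l$ times $\binom{m-j}{l}$; pulling out the (nonzero) row scalars $\binom{N-2}{j}$ and the column signs, independence reduces to nonsingularity of the square matrix $\bigl[\binom{m-j}{l}\bigr]$ with $j$ ranging over $\{0,\dots,s-1\}$ and $l$ over the middle interval $\{l_0,\dots,l_0+s-1\}$. I would prove this by interpolation: a vector in the kernel produces a polynomial $g(x) = \sum_{l} c_l \binom{x}{l}$ of degree at most $l_0+s-1$ vanishing at the $s$ integers $x = m, m-1, \dots, m-s+1$; but every $\binom{x}{l}$ with $l \ge l_0$ also vanishes at $x = 0,1,\dots,l_0-1$, so $g$ vanishes at those $l_0$ further integers. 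Since the largest middle degree satisfies $l_0 + s - 1 \le m$, these $l_0 + s$ integers are distinct, so $g$ has more roots than its degree and vanishes identically; as the $\binom{x}{l}$ have distinct degrees this forces every $c_l = 0$.

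The main obstacle is precisely this last step, namely showing that the anti-symmetrised powers $(Y-X)^{m-j}X^j$ coming from the $1-z^{-1}$ argument fill up the middle $Y$-degrees left uncovered by the $z$ and $1-z$ monomials; the degree/root count above is what makes it work, and it is uniform across the three congruence classes. The remaining ingredient, verifying that the low/high/middle blocks tile $\{0,\dots,m\}$ and that $\mathcal{X}_N^d$ contributes exactly the one, two, or three extra generators needed to balance the counts, is routine bookkeeping but must be carried out separately for $N \equiv -1, 0, 1 \pmod 3$.
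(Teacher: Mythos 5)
Your proposal is correct, and its skeleton coincides with the paper's: the same translation into homogeneous polynomials of degree $N-2$, the same observation that the $z$- and $(1-z)$-images account exactly for the low and high powers of $Y$, and the same reduction of the whole theorem to the nonsingularity of the square binomial matrix $\bigl(\binom{m-j}{l}\bigr)$ formed by the middle-degree coefficients of $(Y-X)^{m-j}X^{j}$. Where you genuinely diverge is at that final step: the paper reindexes to matrices such as $\bigl(\binom{3M-i}{2M-j}\bigr)_{i,j=1}^{M}$ and quotes a ``standard evaluation'' giving an explicit positive product formula for the determinant, handling the three residue classes of $N$ modulo $3$ with three separate determinant computations. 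You instead prove nonsingularity by an interpolation argument — a kernel vector yields $g(x)=\sum_{l}c_{l}\binom{x}{l}$ of degree at most $l_0+s-1$ vanishing at the $s$ integers $m,\dots,m-s+1$ and additionally at $0,\dots,l_0-1$, which are disjoint from the former precisely because $l_0+s-1\le m$, so $g\equiv 0$ and all $c_l=0$. This buys you a self-contained, uniform treatment of all three congruence classes (your block bookkeeping checks out in each case: $(l_0,s)=(M,M)$, $(M,M-1)$, $(M+1,M)$ for $N=3M+1$, $3M$, $3M+2$ respectively, and the sign $(-1)^{m-l}$ is indeed a pure column sign), at the cost of not producing the explicit determinant values the paper records. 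Both arguments are complete; yours is arguably the more elementary and avoids the citation of the product formula (which in the paper's $N\equiv 0$ case even contains a typographical slip, with $n$ appearing in place of $M$).
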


\begin{Rem}\label{rem:nielsendepthbehaviour}
	This is the expected depth necessary.  Since the cobracket of depth \( p \) involves only terms of depth \( < p \), one can iterate the cobracket on the wedge factors in each term of \( \delta S_{n,p}(z) \) to determine a lower bound on the depth.  The cobracket of \( \delta S_{n,p}(z) \) involves both \( S_{n-2,p-1}(z) \wedge \{1\}_3 \) and \( S_{n-1,p-2}(z) \wedge \{1\}_3 \).  Note that the highest depth contributions at most come from \( S_{n-2k,p-1}(z) \wedge \{1\}_{2k+1} \) for \( 0<k<\frac{n}{2} \) , so we can very informally say that, in the cases \( (n,p)= (2m-\varepsilon, m) \) for \( \varepsilon \in \{0,1,2\} \), that modulo lower depth \( S_{n,p}(z) \) ``behaves like \( S_{n-2,p-1}(z) \)". (We will see instances of such a behaviour below, e.g.,~for \(S_{3,2} \) and \( S_{5,3} \), and, in a weaker form, for the cases \( (n,p)= (2m-\varepsilon, m) \)  as evidenced in \
Theorem \ref{thm:depthreduce} below.)
	So in weight \( 3M + k \), \( k = 2, 3, 4 \), with \( n = 2M+k-1,p = M+1, \) we can iterate down \( M \) times until we reach \( ((S_{n',p'}(z) \wedge \{ 1\}_3 ) \wedge \cdots ) \wedge \{1\}_3  \), \( n' + p' = k \).
	
	Since there are no (motivic) identities between the single term \( \Li_2(z) \), between the terms \( \Li_3(z) \) and \( S_{1,2}(z) \), or between the terms \( \Li_4(z), S_{2,2}(z), S_{1,3}(z) \), the left hand factor  \( S_{n',p'}(z) \), \( n' + p' =  2, 3, 4 \), cannot simplify to 0.  This shows that depth \( M+1 \) is necessary for weight \( 3M + k \).
	\end{Rem}

\begin{proof}[Proof of Theorem]
	For simplicity, we focus mainly on the case \( N \equiv 1 \pmod{3} \), say \( N = 3M + 1 \).  In this case, \( \mathcal{X}_N^M \) consists of 3 elements, and we claim the full basis is 
	\[
		\mathcal{B} \= \{S_{3M+1-i, i}(z), S_{3M+1-i, i}(1-z), S_{3M+1-i, i}(1-z^{-1}) \}_{i=1}^{M} \, .
	\]
	
	We need to check the image of \( \mathcal{B} \) has full rank, in terms of the basis \( \{X^i Y^{3M - 1 - i}\}_{i=0}^{3M - 1} \) of 2-variable homogeneous polynomials of degree \( 3M-1 \).  Up to scalars, \( S_{3M+1-i,i}(z) \mapsto X^{3M-i} Y^{i-1} \) and \( S_{3M+1-i,i}(1-z) \mapsto X^{i-1} Y^{3M-i} \).  
	So we can project the vector space of 2-variable degree \( 3M-1 \) homogeneous polynomials down to the quotient by the subspace
	\[
		\langle X^{3M-i} Y^{i-1}, X^{i-1} Y^{3M-i} \mid 1 \leq i \leq M \rangle \, .
	\]
	This leaves only basis monomials \( \{X^{i} Y^{3M-1-i}\}_{i=M}^{2M-1} \), 
	and we have to consider whether the projection of the image of 
	\[
		\{ S_{3M+1-i, i}(1-z^{-1}) \}_{i=1}^{M} \, ,
	\]
	has full rank in this quotient space.
	
	Up to scalars, we have
	\[
		S_{3M+1-i,i}(1-z^{-1}) \mapsto \begin{aligned}[t]
			& (Y-X)^{3M-i} X^{i-1} \\
			& \= \sum_{j=0}^{3M-i} (-1)^{3M - i - j} \binom{3M-i}{j} X^{3M-1-j} Y^j \, .
		\end{aligned}
	\]
	In the quotient space only terms \( j= M, \ldots, 2M-1 \) survive, so the matrix of the map is given by
	\[
		A_M = \begin{pmatrix} (-1)^{3M - i - j} \displaystyle\binom{3M-i}{j} \end{pmatrix}_{{1\leq{}i\leq{}M, \: M\leq{}j\leq{}2M-1}} \,.
	\]
	After factoring out scalars from each row and column, reversing the order of the columns, and reindexing, we obtain the matrix
	\[
		A'_M = \begin{pmatrix} \displaystyle\binom{3M-i}{2M - j} \end{pmatrix}_{i,j=1}^{M} \,.
	\]
	Standard evaluations show that
	\[
		\det(A'_M) = \prod_{i=0}^{M-1} \frac{i! \, (i + 2M)!}{(i+M)!^2} > 0 \, ,
	\]
	which proves that \( \mathcal{B} \) is a basis in weight \( 3M + 1 \). \medskip
	
	For the case of weight \( N \equiv 0 \pmod{3} \), say \( N = 3M \), the quotient matrix up to scalars is
	\[
		B'_M = \begin{pmatrix} \displaystyle\binom{3M-1-i}{2M-1 - j} \end{pmatrix}_{i,j=1}^{M-1} \, ,
	\] with
	\[
		\det(B'_M) = \frac{(2n-1)!}{(n-1)!} \prod_{i=0}^{M-1} \frac{i! \, (i + 2M - 1)!}{(M + i)!^2} > 0 \, .
	\]
	Finally, for the case of weight \( N \equiv -1 \pmod{3} \), say \( N = 3M+2 \), the quotient matrix up to scalars is
	\[
	C'_M = \begin{pmatrix} \displaystyle\binom{3M+1-i}{2M+1 - j} \end{pmatrix}_{i,j=1}^{M} \, ,
	\]
	with
	\[
	\det(C'_M) = \prod_{i=0}^{M} \frac{i! \, (i + 2M)!}{(M + i)!^2} > 0 \, .
	\]
	So the set \( \mathcal{B} \) always forms a basis, as claimed.
\end{proof}

\section{Clean single-valued Nielsen polylogarithms}

For the purposes of numerical experimentation with Nielsen and polylogarithm identities, we can apply the `clean single-value' procedure from \cite{Ch-Du-Ga}, to obtain functions which automatically lift mod-products symbol level identities to analytic identities, up to a constant of integration.

\subsection{Cleaning procedure}

To obtain clean single-valued functions we combine the single-valued map \( \sv \) defined in \cite{Br} with a cleaning map \( R_{\bullet} \). On a graded connected Hopf algebra $H=\bigoplus_N H_N$ with (reduced) coproduct \( \Delta' \) and multiplication \( \mu \), the cleaning map in grading $N$ is a linear map $R_N: H_N \to H_N$ defined by the recursion
\[
	R_N = N \id{} - \mu ({\id} \otimes R_\bullet ) \Delta' 
\,,
\]
as explained in \cite{Ch-Du-Ga}.  Functions obtained from the cleaning map complete the product terms in a universal way, and the single-valued map then ensures these combinations are single-valued.

The map \( R_\bullet \) kills products, so when applying it to \( \Delta' \) one only needs to keep the terms where the right-hand factor is not (trivially) a product.  For iterated integrals, such terms are encoded by an analogue of the  \emph{infinitesimal coproduct} \( D \) (cf. \cite[Definition 4.4]{Br-mzv}) given by 
\begin{align*}
	& D I(x_0; x_1, \ldots, x_N; x_{N+1}) \= \\*
	& \sum_{r = 1}^{N-1} \sum_{p = 0}^{N - r} I(x_0; x_1, \ldots, x_p, x_{p+r+1}, \ldots, x_N; x_{N+1}) \otimes I(x_{p}; x_{p+1}, \ldots x_{p+r}; x_{p+r+1}) \, .
\end{align*}
The terms in this can be mnemonically represented as the following type of segments cutting out a semicircular polygon
\begin{center}
	\begin{tikzpicture}[style = {very thick}]
	\draw (3,0) arc [start angle=0, end angle = 180, radius=3]
	node[pos=0.0, name=x10, inner sep=0pt] {}
	node[pos=0.1, name=x9, inner sep=0pt] {}
	node[pos=0.2, name=x8, inner sep=0pt] {}
	node[pos=0.3, name=x7, inner sep=0pt] {}
	node[pos=0.4, name=x6, inner sep=0pt] {}
	node[pos=0.5, name=x5, inner sep=0pt] {}
	node[pos=0.6, name=x4, inner sep=0pt] {}
	node[pos=0.7, name=x3, inner sep=0pt] {}
	node[pos=0.8, name=x2, inner sep=0pt] {}
	node[pos=0.9, name=x1, inner sep=0pt] {}
	node[pos=1.0, name=x0, inner sep=0pt] {};
	\draw (-3.5,0) -- (3.5,0);
	\filldraw
	(x0) circle [radius=0.1, fill=black] node[above left] {$x_0$}
	(x1) circle [radius=0.1, fill=black] node[left] {$x_1$}
	(x2) circle [radius=0.1, fill=black] node[left] {$x_2$}
	(x3) circle [radius=0.1, fill=black] node[above left] {$x_3$}
	(x4) circle [radius=0.1, fill=black] node[above left] {$x_4$}
	(x5) circle [radius=0.1, fill=black] node[above] {$x_5$}
	(x6) circle [radius=0.1, fill=black] node[above right] {$x_6$}
	(x7) circle [radius=0.1, fill=black] node[above right] {$x_7$}
	(x8) circle [radius=0.1, fill=black] node[right] {$x_8$}
	(x9) circle [radius=0.1, fill=black] node[right] {$x_9$}
	(x10) circle [radius=0.1, fill=black] node[above right] {$x_{10}$};
	\draw [densely dotted] (x1) edge (x7);
	\end{tikzpicture}
\end{center}
Here the main part containing the integration end points \( x_0 \) and \( x_{N+1} \) gives the left hand factor, and the cut-off segment gives the right hand factor in the tensor. \medskip

We recall a few facts about \( R_\bullet \):
\begin{alignat*}{2}
&R_\bullet \zeta(2k+1) &&\= (2k+1)\zeta(2k+1) \, , \\
&R_\bullet \log(z) &&\= \log(z)\, , \text{ and } \\
&R_\bullet \Li_n(z) &&\= n \Li_n(z) - \log(z) \Li_{n-1}(z) \, .
\end{alignat*}

The single-valued map \( \sv \) is an algebra homomorphism, so we can apply it to each factor in each term separately.  The clean single-valued version of a function \( f \) of weight \( N \) is then given by
\[
	\widehat{f} \ceq \frac{1}{N} ({\sv} \circ R_N)(f) \, ,
\] which will have \( \sv f \) as its main term.  The main result in \cite{Ch-Du-Ga} is that the clean single-valued functions \( \widehat{f_i} \) automatically lift a mod-products symbol identity \( \Symbsh\big( \sum_i \lambda_i f_i\big) = 0  \) to an analytic identity
 \[
 	\sum\nolimits_i \lambda_i \widehat{f_i} \= \mathrm{constant} \, .
 \]

\subsection{\texorpdfstring{Clean \( S_{n,2} \) Nielsen polylogarithms }{Clean S\textunderscore{}\{n,2\} Nielsen polylogarithms }}

One can easily derive a `clean' version of \( S_{n,p} \) for the symbol, for all \( n, p \in \ZZ_{>0} \), because constants go to 0 under the symbol map.  For a `clean' analytic version of \( S_{n,p} \) it is important to retain the constants, but this makes a general formula more difficult to obtain.  We focus only on the clean version of \( S_{n,2}(z) \) for the purposes of this paper. \medskip

We find that only the following terms contribute to the infinitesimal coproduct \( D S_{n,2}(z) \),
\begin{align*}
	& I(0; 1, \{0\}^{n-2j}; z) \otimes I(1; 1, \{0\}^{2j}; 0) \quad j \geq 1 \,, \\
	& I(0; 1, z) \otimes I(1; 1, \{0\}^n; z) \,, \\
	& I(0; 1, 1, \{0\}^{n-1}; z) \otimes I(0; 0; z) \, .
\end{align*}
Illustrated diagrammatically, they are the following segments (respectively the family in the upper left, connecting the first vertex `1' with any of the subsequent `0's,  the long segment from `1' to `$z$' at the bottom, and the short segment at the bottom right ending in `$z$')
\begin{center}
	\begin{tikzpicture}[style = {very thick}]
	\draw (3,0) arc [start angle=0, end angle = 180, radius=3]
	node[pos=0.000, name=x12, inner sep=0pt] {}
	node[pos=0.083, name=x11, inner sep=0pt] {}
	node[pos=0.167, name=x10, inner sep=0pt] {}
	node[pos=0.250, name=x9, inner sep=0pt] {}
	node[pos=0.333, name=x8, inner sep=0pt] {}
	node[pos=0.417, name=x7, inner sep=0pt] {}
	node[pos=0.500, name=x6, inner sep=0pt] {}
	node[pos=0.583, name=x5, inner sep=0pt] {}
	node[pos=0.667, name=x4, inner sep=0pt] {}
	node[pos=0.750, name=x3, inner sep=0pt] {}
	node[pos=0.833, name=x2, inner sep=0pt] {}
	node[pos=0.917, name=x1, inner sep=0pt] {}
	node[pos=1.0, name=x0, inner sep=0pt] {};
	\draw (-3.5,0) -- (3.5,0);
	\filldraw
	(x0) circle [radius=0.1, fill=black, red] node[above left] {$0$}
	(x1) circle [radius=0.1, fill=black] node[above left] {$1$}
	(x2) circle [radius=0.1, fill=black] node[above left] {$1$}
	(x3) circle [radius=0.1, fill=black] node[above left] {$0$}
	(x4) circle [radius=0.1, fill=black] node[above left] {$0$}
	(x5) circle [radius=0.1, fill=black] node[above left, xshift=2] {$0$}
	(x6) circle [radius=0.1, fill=black] node[above, yshift=2] {$0$}
	(x7) circle [radius=0.1, fill=black] node[above right, xshift=-2] {$0$}
	(x8) circle [radius=0.1, fill=black] node[above right] {$0$}
	(x9) circle [radius=0.1, fill=black] node[above right] {$\ddots$}
	(x10) circle [radius=0.1, fill=black] node[above right] {$0$}
	(x11) circle [radius=0.1, fill=black] node[above right] {$0$}
	(x12) circle [radius=0.1, fill=black] node[above right] {$z$};
	\draw [densely dotted] (x1) edge (x5);
	\draw [densely dotted] (x1) edge (x6);
	\draw [densely dotted] (x1) edge (x8);
	\draw [bend right=40, densely dotted] (x10) edge (x12);
	\draw [densely dotted] (x1) edge (x12);
	\path (x1) -- (x7) node [pos=0.75] {$\ddots$};
	\end{tikzpicture}
\end{center}Moreover, after rewriting 
\begin{alignat*}{3}
	 I(1; 1, \{0\}^n; z) &\= \underbrace{ I(0; 1, \{0\}^n; z)}_{{}= {}-\Li_{n+1}(z)} + \underbrace{I(1; 1, \{0\}^n; 0)}_{{}={} \zeta(n + 1)} \:&&\Mod{products} \,, 
\end{alignat*}
we obtain
\[
	D S_{n,2}(z) \= S_{n-1,2}(z) \otimes \log(z) - \log(1-z) \otimes \Li_{n+1}(z) - \sum_{j = 1}^{\left\lfloor n/2 \right\rfloor} \Li_{n + 1 - 2j}(z) \otimes \zeta(2j + 1)\,.
\]

So the clean version of \( S_{n,2}(z) \) is given by
\begin{equation}\label{clean:sn2}
	\begin{aligned}
	S_{n,2}^\shuffle(z) \ceq & \Big( {\id} - \frac{1}{n+2} \mu  ({\id} \otimes R_\bullet) D \Big) S_{n,2}(z)  \\
	\= & S_{n,2}(z) - \frac{1}{n+2} S_{n-1,2}(z) \log(z) + \frac{n+1}{n+2} \log(1-z) \Li_{n+1}(z) \\
		& - \frac{1}{n+2} \log(1-z) \log(z) \Li_{n}(z)  + \sum_{j=1}^{\lfloor n/2 \rfloor}\frac{2j+1}{n+2} \zeta(2j+1) \Li_{n+1-2j}(z) \, .
\end{aligned}
\end{equation}

In \cite{Ch-Du-Ga}, it was noted already that the clean version of \( \Li_n \) is given by
\[
	\Li_n^\shuffle(z) \ceq \Li_n(z) - \frac{1}{n} \log(z) \Li_{n-1}(z) \, .
\]

\subsection{\texorpdfstring{Single-valued \( S_{n,2} \) Nielsen polylogarithms }{Single-valued S\textunderscore{}\{n,2\} Nielsen polylogarithms }}

Applying Brown's single-valued map to \( S_{n,2} \) produces the following function
\begin{align*}
		\sv S_{n,2}(z) \= &  \Big( S_{n,2}(z)+ (-1)^{n+1} S_{n,2}(\bar{z}) \Big)
	-\log(1-\bar{z}) \big(\Li_{n+1}(z) + (-1)^n\Li_{n+1}(\bar{z}) \big) \\
	& {} -\sum _{j=0}^{n-1} \frac{(-1)^j}{(n-j)!} \log ^{n-j}\big(\abs{z}^2\big) \Big( S_{j,2}(\bar{z}) + \log (1-\bar{z}) \Li_{j+1}(\bar{z}) \Big) \\
	& {} +\sum_{\substack{k=1 \\ \text{\( k \) odd}}}^{n-1} \sum_{j=1}^{n-k} \frac{2
		(-1)^j  \zeta (k+2)}{(n-j-k)!} \Li_j(\bar{z}) \log ^{n-j-k}\big(\abs{z}^2\big) \,.
\end{align*}
Here we again use the convention \( S_{0,p}(z) = \frac{(-1)^p}{p!}\log^p(1-z) \), via \eqref{eq:nielsenasLi}.

Computed already in \cite{Br} is the following single-valued version of \( \Li_n \), obtained from the single-valued map \( \sv \):
\[
	\sv \Li_n(z) \= \Big( \Li_{n}(z) - (-1)^n \Li_{n}(\bar{z}) \Big) - 
	\sum_{j=1}^{n-1} \frac{(-1)^j}{(n - j)!} \Li_j(\bar{z}) \log^{n - 
	j}\big(\abs{z}^2\big) \, ,
\]
although this does not yet satisfy clean functional equations.  The single-valued version of \( \Li_n^\shuffle(z) \), namely
	\begin{align*}
\LiCS_n(z) \ceq & \Big( \Li_n(z) - (-1)^n \Li_n(\bar{z}) \Big) - \frac{1}{n} \Li_{n-1}(z) \log\big(\abs{z}^2\big) \\
		& {} - \sum_{j=1}^{n-1} \frac{j (-1)^j}{n (n-j)!} \Li_{j}(\bar{z}) \log^{n-j}\big(\abs{z}^2\big) \, ,
\end{align*}
 does have this property.

\begin{Rem}
	This single-valued polylogarithm is not simply Zagier's single-valued version (denoted $P_n(z)$ in \cite{Za1}) \[
	 \LiZS_n(z) \ceq \Re_n \Bigg( \sum_{j=0}^{n-1} \frac{2^j B_j}{j!} \log^j\abs{z} \, \Li_{n-j}(z) \Bigg) \, ,
	 \] 
	where \( \Re_n = \Re \) for \( n \) odd, \( \Re_n = \Im \) for \( n \) even and \( B_j \) is the \( j \)-th Bernoulli number.  It is shown in \cite{Ch-Du-Ga} how Zagier's single-valued version \( \LiZS \) and the clean single-valued version \( \LiCS \) are related.
\end{Rem}

Applying the single-valued map to the expression for the clean Nielsen polylogarithm \( S_{n,2}^\shuffle(z) \) in \eqref{clean:sn2}  gives the following clean single-valued Nielsen polylogarithm
\begin{align*}
\SCS_{n,2}(z) \ceq & \Big(S_{n,2}(z)-(-1)^{n+2} S_{n,2}(\bar{z}) \Big)
 -\frac{1}{n+2}\log \big(\abs{z}^2\big) \Big( S_{n-1,2}(z) + \log (1-z) \Li_n(z) \Big) \\[1ex]
& {} + \frac{1}{n+2} \Big((n+1) \log(1-z) - \log(1-\bar{z})\Big) \Big( \Li_{n+1}(z) - (-1)^{n+1} \Li_{n+1}(\bar{z})  \Big) \\[1ex]
 & {} +\sum_{j=1}^{n} \frac{(-1)^j }{n+2} \Big\{(j+1) S_{j-1,2}(\bar{z}) - \Big(j \log (1-z)- \log(1-\bar{z})\Big) \Li_j(\bar{z}) \Big\} \frac{\log ^{n-j+1}\big(\abs{z}^2\big)}{(n-j+1)!}  \\*[1ex]
& {} + \sum_{\substack{k=1 \\ \text{$k$ odd}}}^{n-1}\frac{2 \zeta (k+2)}{n+2} \bigg\{ (k+2) \Li_{n-k}(z) + \sum _{j=1}^{n-k} j (-1)^j \frac{\log ^{n-j-k}\big(\abs{z}^2\big)}{ (n-j-k)!} \Li_j(\bar{z}) \bigg\} \,.
\end{align*}
In particular, the main term is \( \Re_{n+2} S_{n,2}(z) \), just as \( \Re_{n} \Li_{n}(z) \) is the main term for \( \LiCS_n \) and \(  \LiZS_n \). \medskip

Combined with Lemma \ref{lem:snpmodsymb}, we see that \( \SCS_{n,p} \) satisfies mod-products symbol level identities up to an integration constant, i.e. if
\[
	\Symbsh \Big( \sum\nolimits_i \alpha_i S_{n_i,p_i}(x_i) \Big) = 0 \, ,
\] then
\[
	 \sum\nolimits_i \alpha_i \SCS_{n_i,p_i}(x_i) = \mathrm{constant} \, .
\]
In particular, we obtain the following clean-single-valued versions of the inversion and reflection results in Propositions \ref{prop:nielsenreflection} and \ref{prop:nielseninverse}
\begin{align*}
	\SCS_{n,p}(z) &\= -\SCS_{p,n}(1-z) + \frac{1}{p+n} \binom{p+n}{n} \zeta^{\sv}(n + p) \,, \\
	\SCS_{n,p}\bigg(\frac{1}{z}\bigg) &\= (-1)^n \sum_{k=0}^{p-1} (-1)^k \binom{n + k - 1}{k} \SCS_{n+k,p-k}(z) \\
	& \quad\quad + \frac{ \zeta^{\sv}(n+p)}{(n+p)^2} \bigg( (n+p) + p (-1)^p \binom{n+p}{n} \bigg) \, ,
\end{align*}
where \( \zeta^{\sv}(n) \) is the single-valued MZV given by
\[
	\zeta^{\sv}(n) \= \begin{cases}
		2\zeta(n) & \text{\( n \) odd\,,} \\
		0 & \text{\( n \) even\,.}
	\end{cases}
\]

\section{\texorpdfstring{The algebraic \( \Li_2 \), \( \Li_3 \) and \( \Li_4 \) functional equations}{The algebraic Li\textunderscore{}2, Li\textunderscore{}3 and Li\textunderscore{}4 functional equations}}\label{sec:algfe}

We recall the following infinite family of functional equations given in \cite{Ga}, for \( \Li_2 \), \( \Li_3 \) and \( \Li_4 \).  We will use them in later sections, particularly Sections \ref{sec:s32alg}, \ref{sec:s42alg}, \ref{sec:s52alg} and \ref{sec:s53alg}, to provide some additional evidence for the behaviour we expect of Nielsen polylogarithms modulo the classical polylog \( \Li_n \). \medskip

Let \( a,b,c\in \ZZ\sm \{0\}\) be such that $a+b+c=0$, and let \( \{ p_i(t) \}_{i=1}^{r} \) be the roots (counted with multiplicity) of \( x^a(1-x)^b = t \). Furthermore, assume $a>0$
for convenience. Then with the earlier notation that \( \{z\}_n \) means the image of the motivic \( \Li_n(z) \) modulo products, we have
\begin{align*}
&\sum_{i=1}^{r} \{p_i(t)\}_2 \= 0 \, , \\
&\sum_{i = 1}^{r} -\frac{1}{a} \{1-p_i(t)\}_3 + \frac{1}{b} \{p_i(t)\}_3 \= 0 \, , \\
&\sum_{i = 1}^{r} -\frac{1}{a} \{ 1-p_i(t) \}_4 + \frac{1}{b} \{p_i(t)\}_4 + \frac{1}{c} \{ 1 - p_i(t)^{-1} \}_4 \= 0 \, .
\end{align*}

As in \cite{Ga}, we observe the following facts about \( p_i \):
\begin{align*}
 	& \prod_{i=1}^{r} p_i(t) \= \begin{cases} 
 		\pm t & \text{if $ a + b > 0 $\,,} \\
 		\pm 1 & \text{if $ a + b < 0 $\,,}
 	\end{cases} \\
 	&1 - p_i(t) \= \frac{t^{1/b}}{p_i^{a/b}(t)} \,, \text{ up to a \( b \)-th root of unity\,.}
\end{align*}

Note that the case \( (a,b,c) = (1,2,-3) \), or any permutation thereof, can be rationally parametrised over \( \QQ \).  Namely the solutions to 
\[
	x(1-x)^2 \= \frac{(1-t)^2 t^2}{(1- t + t^2)^3}
\] are given by
\[
	p_1(t) \= \frac{1}{1-t+t^2}, \quad p_2(t) \= \frac{t^2}{1-t + t^2}, \quad p_3(t)  \= \frac{(1-t)^2}{1 - t + t^2} \, .
\]
The case \( (a,b,c) = (1,3,-4) \), or any permutation thereof, can also be rationally parametrised but this time only over \( \QQ(i) \).  In fact, for some variable $t$ let
\begin{alignat*}{2}
	U_1 &\= -1 -(1-2 i) t + i t^2 \,,  \quad & U_2 &\= 1+t+\phantom{1}t^2 \,, \\
	U_3 &\= -i -(1+2 i) t - {}\phantom{1}t^2 \,,  & U_4 &\= i + t -i t^2 \,, \\[1ex]
	V &\= \mathrlap{-(U_1 + U_2)(U_1 + U_3)(U_2 + U_3)\,.}
\end{alignat*}
Then the roots of
\[
x(1-x)^3 \=  \prod_{j=1}^4 (U_j^3/V) \, ,
\]
are given by
\[
	p_j(t) \= U_j^3/V \, ,
\]
for \( j = 1, \ldots, 4 \).

\section{Nielsen polylogarithms in weight 5}

In this section we prove one of our main results, stating that ``$S_{3,2}$ evaluated on functional equations of $\Li_2$ is expressible in terms of $\Li_5$". We first corroborate this for the simpler two term relations in Propositions \ref{prop:s32inv} and \ref{prop:s32twoterm} as well as for a family of algebraic functional equations (which are not known to be consequences of the five-term relation) in Proposition \ref{prop:s32alg}, all of which have been already proved in \cite{Ch}, before turning to the basic five-term relation itself (Theorem \ref{thm:s32fiveterm}) and subsequent specialisations like distribution relations as well as ladders and special values. As a further corollary we recover a functional equation for $\Li_5$ recently obtained in \cite{R}.

\smallskip
\paragraph{\em Preconsideration:} Following Section \ref{sec:symbmot}, the 2-part of the motivic cobracket of \( S_{3,2}(z) \) is computed to be
\[
	\delta S_{3,2}(z) = \{z\}_2 \wedge \{1\}_3 \, . 
\]
Since \( \{1\}_3 \neq 0 \), this does not vanish in general, and we cannot reduce \( S_{3,2} \) to \( \Li_5 \) on the motivic level, hence
we should not expect this on a function level, either.

Combinations \( \sum_i \alpha_i [x_i] \) such that \( \sum_i \alpha_i \{x_i\}_2 = 0 \), i.e. functional equations for \( \Li_2 \), will automatically kill \( \delta \sum_i \alpha_i S_{3,2}(x_i) \).  Hence, we expect the Nielsen polylogarithm \( S_{3,2} \) behaves like \( \Li_2 \), modulo \( \Li_5 \)'s and products.

\subsection{Two-term identities}

We can give relatively simple analytic identities for \( S_{3,2} \) under the basic two term identities \( \{z\}_2 + \{z^{-1}\}_2 = 0 \) and \( \{z\}_2 + \{1-z\}_2 = 0 \) for \( \Li_2 \).  These identities are already contained within the reflection and inversion results, and so can be shown without the need to invoke the clean single-valued functions.

\begin{Prop}\label{prop:s32inv}
	For all \( z \in \mathbb{C} \sm [0, \infty) \), the following identity holds
	\begin{align*}
		S_{3,2}(z) + S_{3,2}\Big(\frac{1}{z}\Big) \= & 3 \Li_5(z)-\Li_4(z) \log (-z) -\frac{1}{5!} \log ^5(-z) \\
		& {} +\frac{1}{2!} \zeta (3) \log ^2(-z) + \frac{7}{4} \zeta (4) \log (-z) + \Big( \zeta (5) + \zeta (2) \zeta (3) \Big) \,.
	\end{align*}
	
	\begin{proof}
		This is just the case \( S_{3,2} \) of Proposition \ref{prop:nielseninverse}.
	\end{proof}
\end{Prop}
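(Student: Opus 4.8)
The plan is to derive this identity purely as the specialisation of the general inversion formula in Proposition~\ref{prop:nielseninverse} to the case $n = 3$, $p = 2$; the author's one-line proof indicates that no new idea is needed, only careful bookkeeping. First I would expand the double sum $(-1)^n\sum_{k=0}^{n}(-1)^k\sum_{m=0}^{k}\tfrac{\log^m(-z^{-1})}{m!}\binom{n+k-m-1}{k-m}S_{n+k-m,\,p-k}(z)$ with $n=3$, $p=2$, noting that the outer sum effectively terminates at $k=2$, since $S_{a,b}$ with $b<0$ does not occur.

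The $k=0$ term contributes exactly $S_{3,2}(z)$, which I move to the left-hand side to produce the desired combination $S_{3,2}(z) + S_{3,2}(1/z)$. The $k=1$ term involves $S_{4,1}$ and $S_{3,1}$, which by the relation $S_{m-1,1} = \Li_m$ are the classical polylogarithms $\Li_5$ and $\Li_4$; after applying the branch identity $\log(-z^{-1}) = -\log(-z)$ and the overall factor $(-1)^n = -1$, this produces precisely $3\Li_5(z) - \Li_4(z)\log(-z)$. The $k=2$ term involves only the degenerate $S_{a,0}(z)$, which by the iterated-integral regularisation are pure powers $\tfrac{1}{a!}\log^a(z)$; these, together with the leading contribution $\tfrac{1}{5!}\log^5(-z^{-1})$ from the constant part of Proposition~\ref{prop:nielseninverse}, collapse after the branch substitution into the single term $-\tfrac{1}{5!}\log^5(-z)$.

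The remaining zeta-value contributions come from the constant part $(-1)^p\sum_{j=0}^{n-1}\tfrac{\log^j(-z^{-1})}{j!}C_{n-j,p}$, where the $C_{a,p}$ are the explicit homogeneous polynomials in $\pi^2$ and the multiple zeta values $S_{a,b}(1)=\zeta(\{1\}^{b-1},a+1)$ supplied by Proposition~\ref{prop:nielseninverse}. Evaluating these---using for instance $S_{3,2}(1)=\zeta(1,4)=2\zeta(5)-\zeta(2)\zeta(3)$ and the standard values of $\zeta(2),\zeta(3),\zeta(4)$---and collecting powers of $\log(-z)$ yields $\tfrac{1}{2!}\zeta(3)\log^2(-z)+\tfrac{7}{4}\zeta(4)\log(-z)+(\zeta(5)+\zeta(2)\zeta(3))$. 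The only genuine obstacle is arithmetic rather than conceptual: one must track $\log(-z^{-1})=-\log(-z)$ consistently through every term and, crucially, account for the $\log(-1)$ contributions that arise when converting the $\log(z)$ powers coming from the $S_{a,0}$ terms into powers of $\log(-z)$, since it is exactly these contributions (absorbed into the $\pi^2$-part of the $C_{a,p}$) that generate the $\tfrac{7}{4}\zeta(4)$ coefficient. As an independent check one may differentiate both sides using $\big(z\tfrac{d}{dz}\big)S_{n,p}(z)=S_{n-1,p}(z)$, verify that the resulting weight-$4$ identity holds, and fix the integration constant by letting $z\to\infty$ or by evaluating at a convenient point.
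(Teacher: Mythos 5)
Your strategy is the paper's own: specialise Proposition \ref{prop:nielseninverse} to \((n,p)=(3,2)\), and your \(k=0\) and \(k=1\) terms are handled correctly (they give \(-S_{3,2}(z)\), which moves to the left-hand side, and \(3\Li_5(z)-\Li_4(z)\log(-z)\)). The genuine gap is in your treatment of the remaining terms. The double sum in Proposition \ref{prop:nielseninverse} must be read as running over \(0\le k\le p-1\) only, so that every summand is an honest Nielsen polylogarithm; all pure \(\log\)-powers are already contained in the explicit second line with the constants \(C_{n-j,p}\). (The printed upper limit \(n\) is misleading: compare the clean single-valued inversion stated later in the paper, \(\SCS_{n,p}(1/z)=(-1)^n\sum_{k=0}^{p-1}(-1)^k\binom{n+k-1}{k}\SCS_{n+k,p-k}(z)+\cdots\), and the paper's weight-4 example, where \(S_{1,3}(z^{-1})\) picks up the term \(-S_{3,1}(z)\) from \(k=2=p-1\) even though \(n=1\).) With this reading, for \((n,p)=(3,2)\) only \(k=0,1\) occur, and the entire second line equals \(-\tfrac{1}{5!}\log^5(-z)+\tfrac{1}{2!}C_{1,2}\log^2(-z)-C_{2,2}\log(-z)+C_{3,2}\); inserting K\"olbig's constants \(C_{1,2}=\zeta(3)\), \(C_{2,2}=-\tfrac74\zeta(4)\), \(C_{3,2}=\zeta(5)+\zeta(2)\zeta(3)\) finishes the proof.

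By contrast, you keep a \(k=2\) layer of degenerate terms \(S_{a,0}(z)=\tfrac{1}{a!}\log^a(z)\) and assert that, together with \(\tfrac{1}{5!}\log^5(-z^{-1})\), they ``collapse'' to \(-\tfrac{1}{5!}\log^5(-z)\). They do not. Writing \(L=\log(-z)\), so that \(\log(-z^{-1})=-L\) and \(\log z=L\pm i\pi\), your \(k=2\) contribution is
\[
-\Big(\tfrac{1}{20}\log^5 z-\tfrac{1}{8}\,L\log^4 z+\tfrac{1}{12}\,L^2\log^3 z\Big)
\= -\tfrac{1}{120}L^5\pm\tfrac{i\pi^3}{12}L^2-\tfrac{\pi^4}{8}L\mp\tfrac{i\pi^5}{20}\,,
\]
so adding \(\tfrac{1}{5!}\log^5(-z^{-1})=-\tfrac{1}{120}L^5\) yields \(-\tfrac{1}{60}L^5\), twice the coefficient in the statement, plus branch-dependent imaginary terms. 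No choice of the constants \(C_{n-j,2}\) (fixed \emph{real} polynomials in \(\pi^2\) and zeta values) can repair either defect; in particular your suggestion that the \(\tfrac{7}{4}\zeta(4)\) coefficient is generated by \(\log(-1)\) corrections to the \(S_{a,0}\) terms cannot be right --- that coefficient is simply \(-C_{2,2}\). Your closing remark (differentiate using \(z\tfrac{d}{dz}S_{n,p}=S_{n-1,p}\) and fix the integration constant) is a valid independent route, and is how the paper proves Proposition \ref{prop:s32twoterm}, but it is not carried out here, and the main derivation as written does not go through.
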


\begin{Prop}\label{prop:s32twoterm}
	For all \( z \in \CC \sm (-\infty, 0] \cup [1, \infty) \), the following identity holds
	\begin{align*}
	 S_{3,2}(1-z)+S_{3,2}(z) \= & \Li_5(1-z)+\Li_5(1 - z^{-1} )+\Li_5(z) -\Li_4(1-z) \log (z)-\Li_4(z) \log (1-z) \\
	  & -\frac{1}{5!} \log ^5(z)+\frac{1}{4!} \log ^4(z) \log (1-z)-\frac{1}{3!\,2!}  \log ^3(z) \log ^2(1-z) \\
	  & - \frac{1}{3!} \zeta (2) \log ^3(z) +\frac{1}{2!} \zeta (2) \log ^2(z)  \log (1-z) + \zeta (3)  \log (z) \log (1-z) \\ 
	  & + \zeta (4) \log (1-z) - \frac{3}{4} \zeta(4) \log (z) + \Big( \zeta (5) - \zeta (2) \zeta (3) \Big) \,.
	\end{align*}
	
	\begin{proof}
		Use differentiation to reduce to a weight 4 identity, which can also be verified.  The constant of integration is fixed by evaluating as \( z \to 1 \), to obtain \( S_{3,2}(1) - \Li_5(1) = -\zeta(2)\zeta(3) + \zeta(5)\).\medskip
		
		Because of the argument in Section \ref{sec:spanningset}, one can verify that such an identity exists, and that it follows from the reflection and inversion just by computing the mod-products symbol.  From the recursion \eqref{snprecursion} for \( \Symbsh S_{n,p}(z) \), and the reduction of weight 4 Nielsen polylogs to \( \Li_4 \), we have
		\begin{align*}
		\Symbsh S_{3,2}(z) \= & 
		\Symbsh S_{2,2}(z) \otimes z \ 
		 + \  \Symbsh S_{3,1}(z) \otimes (1-z) \\
		\= & \Symbsh\Big( -\Li_4(1-z) + \Li_4(z) - \Li_4(1 - z^{-1}) \Big) \otimes z \\
		 & + \Symbsh \Li_4(z) \otimes (1-z) \, .
		\end{align*}
		We also have \( \Symbsh \Li_5(z) = \Symbsh \Li_4(z) \otimes z \).  Recall too that \( \Symbsh \Li_4(z) = -\Symbsh \Li_4(z^{-1}) \), and that our tensor symbols are written multiplicatively in each slot.  We will drop the notation \( \Symbsh \) from the tensors, for simplicity.  We compute directly that the mod-products symbol of the left hand side is
		\begin{align*}
			& \Symbsh\Big( S_{3,2}(1-z) + S_{3,2}(z) \Big)  \\
			& \= \begin{aligned}[t] & \Big( -\Li_4(1-z) + \Li_4(z) - \Li_4(1-z^{-1}) \Big) \otimes z + \Li_4(z) \otimes (1-z) \\
			& {} + \Big( -\Li_4(z) + \Li_4(1-z) - \Li_4\Big(\frac{z}{z-1}\Big) \Big) \otimes (1-z) + \Li_4(1-z) \otimes z \end{aligned} \\[1ex]
			 & \= \Big( \Li_4(z) - \Li_4(1-z^{-1}) \Big) \otimes z + \Big( \Li_4(1-z) - \Li_4\Big(\frac{z}{z-1}\Big) \Big) \otimes (1-z) \\[1ex]
			& \= \Li_4(1-z) \otimes (1-z) + \Li_4\Big(\frac{z-1}{z}\Big) \otimes \frac{z-1}{z} +  \Li_4(z) \otimes z \, .
		\end{align*}
		This is already the mod-products symbol of the right hand side, i.e. it equals:
		\[
			\Symbsh \Big( \Li_5(1-z)+\Li_5(1 - z^{-1} )+\Li_5(z) \Big) \ .
		\] 
		The remaining terms on the right hand side do not contribute, as they are already non-trivial products.\medskip
		
		Alternatively, the symbol calculus above translates to the following straightforward to check equality of polynomial invariants:
		\begin{align*}
			S_{3,2}(z) + S_{3,2}(1-z) &\:\mapsto\: 3YX^2 - 3XY^2 \,, \\
			\Li_5(z) + \Li_5(1-z) + \Li_5(1-z^{-1}) &\:\mapsto\: X^3 - Y^3 + (Y-X)^3 \,. \qedhere
		\end{align*}
	\end{proof}
\end{Prop}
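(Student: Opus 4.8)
The plan is to determine this weight $5$ identity in two layers: first fix its polylogarithmic content (the three $\Li_5$ terms) on the level of the mod-products symbol, and then recover all the product terms and the final constant by differentiating down in weight. I would deliberately avoid applying the reflection formula of Proposition \ref{prop:nielsenreflection} directly, since for $(n,p)=(3,2)$ it re-expresses $S_{3,2}(1-z)$ through $S_{2,3}(z)$, which is again a genuine weight $5$ Nielsen polylogarithm (its cobracket does not vanish) rather than a classical $\Li_5$, so the reflection alone does not close the computation.

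\emph{Symbol.} First I would check that the two sides agree modulo products, i.e.
\[
	S_{3,2}(1-z) + S_{3,2}(z) \modsh \Li_5(1-z) + \Li_5(1-z^{-1}) + \Li_5(z) \,.
\]
By Lemma \ref{lem:snpmodsymb} one has $\Symbsh \Li_5(w) = -(1-w)\wedge w \otimes w^{\otimes 3}$ for each argument $w$, and the symbols of $S_{3,2}(z)$ and $S_{3,2}(1-z)$ are read off the same way. Rather than expand these tensors by hand, I would pass to the $\mathfrak{S}_3$-representation dictionary set up just before Theorem \ref{thm:nielsendepth}: there $S_{3,2}(z)\mapsto 3X^2Y$ and $S_{3,2}(1-z)\mapsto -3XY^2$, while $\Li_5(z),\Li_5(1-z),\Li_5(1-z^{-1})$ map to $X^3,-Y^3,(Y-X)^3$. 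The symbol identity then collapses to the polynomial identity $3X^2Y-3XY^2 = X^3-Y^3+(Y-X)^3$, which is immediate. This both confirms that an identity of the stated shape exists and shows its only polylogarithmic part is the displayed combination of $\Li_5$'s.

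\emph{Products by differentiation.} The symbol match means that the difference between the left side and the full right side has vanishing symbol, hence is a sum of products of lower-weight functions plus a constant, which the symbol cannot detect. To force these to vanish I would differentiate. Using $\frac{d}{dz}S_{n,p}(z)=\frac1z S_{n-1,p}(z)$ together with the chain rule, the left side has derivative $\frac1z S_{2,2}(z)-\frac{1}{1-z}S_{2,2}(1-z)$, and the right side differentiates to a weight $4$ expression in $\Li_4$'s and products. Reducing $S_{2,2}(z)$ and $S_{2,2}(1-z)$ to $\Li_4$ via Proposition \ref{prop:s22reduction} turns this into a relation purely among $\Li_4$'s (at $z$, $1-z$, $z/(z-1)$, $\ldots$) and products of weight $\le 3$ functions, whose symbol matches automatically since differentiation preserves the symbol identity; if necessary it can be reduced by one further differentiation to weight $3$, where every term is expressible through $\Li_3$ and the identity is elementary. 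Equality of derivatives shows the two sides of the original relation differ by a constant.

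\emph{Constant, and the main difficulty.} Finally I would fix the constant by sending $z\to 1$: the left side tends to $S_{3,2}(0)+S_{3,2}(1)=S_{3,2}(1)$, and with $S_{3,2}(1)=\zeta(1,4)=2\zeta(5)-\zeta(2)\zeta(3)$ and $\Li_5(1)=\zeta(5)$ this forces the additive constant $\zeta(5)-\zeta(2)\zeta(3)$, as stated. The main obstacle is not conceptual but computational: once differentiated and $S_{2,2}$-reduced, one must track a sizeable collection of $\Li_4$ and product terms and verify their exact cancellation, and it is precisely this bookkeeping step that pins down all the explicit rational coefficients and $\zeta(k)$-factors appearing in the statement.
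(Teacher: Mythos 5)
Your proposal is correct and follows essentially the same route as the paper: the paper likewise fixes the $\Li_5$ content via the mod-products symbol (including the same polynomial-invariant shortcut $3X^2Y-3XY^2 = X^3-Y^3+(Y-X)^3$), verifies the full identity by differentiating down to weight $4$, and pins the constant by letting $z\to 1$ using $S_{3,2}(1)=2\zeta(5)-\zeta(2)\zeta(3)$.
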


	By setting \( z = -1 \) in the first identity, and \( z = \frac{1}{2} \) in the second, we recover the following evaluations, contained in Table 2 and Equation 9.9 \cite{Ko2}.
	\begin{align}
\label{eqn:s32m1} \begin{split} S_{3,2}(-1) \= & 
	- \frac{29}{32} \zeta(5) 
	+ \frac{1}{2} \zeta(2) \zeta(3) \,,
 \end{split} \\
\label{eqn:s32half}	\begin{split} S_{3,2}\Big(\frac{1}{2}\Big) \= & 
\Li_5\Big(\frac{1}{2}\Big)
+\Li_4\Big(\frac{1}{2}\Big) \log (2)
+\frac{1}{2} \Big( \frac{1}{16}\zeta (5) -\zeta(2) \zeta (3) \Big)
-\frac{1}{8} \zeta(4) \log (2) 
\\& 
+\frac{1}{2!} \zeta (3) \log ^2(2) 
-\frac{1}{3!} \zeta(2) \log ^3(2) 
+\frac{3}{5!} \log^5(2) \,.
\end{split}
	\end{align}
	
	The existence of these reductions corresponds to the fact that \( \{\frac{1}{2}\}_2 = \{-1\}_2 = 0 \), so that the 2-part of the motivic cobrackets of \( S_{3,2}(\frac{1}{2}) \) and of \( S_{3,2}(-1) \) vanish.

	\subsection{\texorpdfstring{Algebraic \( \Li_2 \) functional equation}{Algerbaic Li\textunderscore{}2 functional equation}} \label{sec:s32alg}
	
	Before dealing with the full five-term identity, we instead consider the simplest case of the algebraic \( \Li_2 \) functional equation from Section \ref{sec:algfe}.  In more general cases, we have greater success reducing these algebraic functional equations, and so this is a good place to introduce them.  This identity was already observed in \cite{Ch}, where it was used to obtain a new functional equation for \( \Li_5 \). Note that the special case \( a=b=1 \) is essentially Proposition \ref{prop:s32twoterm}.
	
	\begin{Prop}[Proposition 7.4.19 in \cite{Ch}]\label{prop:s32alg}
		Let \(a,b,c\in \ZZ\sm\{0\}\), with \( a+b+c=0 \), and let \( \{p_i(t)\}_{i=1}^r \)  be the roots of \( x^a(1-x)^b = t \).  For convenience take \( a > 0 \).  Then the following reduction holds on the level of the mod-products symbol
		\[
			\sum_{i = 1}^{r} S_{3,2}(p_i(t)) \modsh \sum_{i=1}^{r} \bigg\{ \frac{b-a}{b} \Li_5(p_i(t)) + \frac{b}{a} \Li_5(1 - p_i(t)) + \frac{b}{a+b} \Li_5(1 - p_i(t)^{-1}) \bigg\}\,.
		\]
	\end{Prop}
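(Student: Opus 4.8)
The plan is to verify the claimed identity directly on the mod-products symbol, using Lemma \ref{lem:snpmodsymb} together with the multiplicative relations satisfied by the roots $p_i(t)$ recorded in Section \ref{sec:algfe}. Throughout I suppress $\log$ and write symbol entries additively, setting $T \ceq \log t$ and $P_i \ceq \log p_i(t)$; since we work modulo $2$-torsion I ignore signs and roots of unity in the tensor slots. The crucial input is the relation $1 - p_i = t^{1/b}/p_i^{a/b}$ up to a $b$-th root of unity, which on the symbol becomes $1 - p_i \equiv \frac{1}{b}T - \frac{a}{b}P_i$. First I would substitute this into every tensor slot appearing below, and abbreviate the right-hand summand by $R_i \ceq \frac{b-a}{b}\Li_5(p_i) + \frac{b}{a}\Li_5(1-p_i) + \frac{b}{a+b}\Li_5(1-p_i^{-1})$.

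By Lemma \ref{lem:snpmodsymb}, and using $\Li_5 = S_{4,1}$, each weight-$5$ term has the shape $-(1-w)\wedge w \otimes(\cdots)$ with a degree-$3$ tail, where $w \in \{p_i, 1-p_i, 1-p_i^{-1}\}$. I would first record the leading wedge for all these arguments: after the substitution each collapses to a multiple of $T \wedge P_i$. For instance $(1-p_i)\wedge p_i = \frac{1}{b}\,T\wedge P_i$ since $P_i \wedge P_i = 0$, while the reflected and inverted arguments are handled with $\log(1-p_i^{-1}) \equiv \big(\frac1b T - \frac ab P_i\big) - P_i$ and produce the same wedge up to sign. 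Consequently every term on both sides takes the form $(T \wedge P_i)\otimes Q$ with $Q$ a symmetric degree-$3$ tensor in the two letters $T, P_i$; identifying such symmetric cubes with binary cubic forms in variables $\mathsf T \leftrightarrow T$, $\mathsf P \leftrightarrow P_i$ (so that $T \shuffle P_i^{\otimes 2} \leftrightarrow 3\,\mathsf T\mathsf P^2$, and so on) reduces the entire statement to an identity of binary cubics.

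Expanding, the left-hand tail becomes $-\frac{3}{b^2}\mathsf T\mathsf P^2 + \frac{3a}{b^2}\mathsf P^3$, while the three $\Li_5$ contributions in $R_i$, after inserting $\mathsf L = \frac1b\mathsf T - \frac ab\mathsf P$ into $-\frac{b-a}{b^2}\mathsf P^3$, $\frac{1}{a}\mathsf L^3$ and $-\frac{1}{a+b}(\mathsf L - \mathsf P)^3$, collect to $\frac{1}{ab^2(a+b)}\mathsf T^3 - \frac{3}{b^2}\mathsf T\mathsf P^2 + \frac{3a}{b^2}\mathsf P^3$ (the $\mathsf T^2\mathsf P$ coefficient cancels identically). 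Hence the two sides agree term-by-term in $i$ apart from a single residual piece, namely
\[
  \Symbsh(S_{3,2}(p_i)) - \Symbsh(R_i) \= -\frac{1}{ab^2(a+b)}\,(T \wedge P_i)\otimes T^{\otimes 3}\,.
\]

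The final step is to sum over $i$ and exploit the product formula from Section \ref{sec:algfe}. Summation moves $P_i$ into $\sum_i P_i = \log\prod_i p_i$, which equals $T$ when $a+b>0$ and $0$ when $a+b<0$ (modulo torsion), so that in either case $T \wedge \sum_i P_i = 0$. Thus the residual terms cancel \emph{after} summation, giving $\sum_i \Symbsh(S_{3,2}(p_i)) = \sum_i \Symbsh(R_i)$, which is the assertion. The main obstacle is exactly this last point: the identity is genuinely false term-by-term, and the stray $\mathsf T^3$ contribution vanishes only globally through $\prod_i p_i = \pm t^{\pm1}$. The computational care therefore lies in faithfully carrying the substitution $1-p_i \equiv \frac1b T - \frac ab P_i$ and the symmetric-cube bookkeeping through all three reflected and inverted $\Li_5$ arguments before the cancellation becomes visible.
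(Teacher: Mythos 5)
Your computation is correct, and I checked the key coefficients: the per-root tails do come out to $-\tfrac{3}{b^2}\mathsf T\mathsf P^2+\tfrac{3a}{b^2}\mathsf P^3$ on the left and $\tfrac{1}{ab^2(a+b)}\mathsf T^3-\tfrac{3}{b^2}\mathsf T\mathsf P^2+\tfrac{3a}{b^2}\mathsf P^3$ on the right, so the residue is exactly $-\tfrac{1}{ab^2(a+b)}(T\wedge P_i)\otimes T^{\otimes 3}$, which dies after summation via $\prod_i p_i=\pm t^{\varepsilon}$. However, your route is genuinely different from the paper's. The paper does not expand the full weight-$5$ symbol: it peels off only the \emph{last} tensor slot using the recursion $\Symbsh S_{3,2}(z)=\Symbsh S_{2,2}(z)\otimes z+\Symbsh \Li_4(z)\otimes(1-z)$ and $\Symbsh\Li_5(w)=\Symbsh\Li_4(w)\otimes w$, substitutes $\otimes(1-p_i)=\tfrac1b\otimes t-\tfrac ab\otimes p_i$ only in that final slot, observes that every term ending in $\otimes\, p_i$ cancels term-by-term, and identifies the surviving $\otimes\, t$ piece with the algebraic $\Li_4$ functional equation of Section \ref{sec:algfe}, which it imports from \cite{Ga} as known. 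What each approach buys: the paper's argument is shorter, and the same one-step descent generalises uniformly to Propositions \ref{prop:s42alg}, \ref{prop:s52alg} and \ref{prop:s53alg}, where the bottom of the recursion is again a previously established lower-weight identity; your argument is self-contained in that it never invokes the $\Li_4$ (or $\Li_2$, $\Li_3$) algebraic equations, bottoming out instead in the elementary norm relation for the $p_i$, and it isolates precisely where globality is needed — only to annihilate the single $T\wedge P_i\otimes T^{\otimes 3}$ residue — at the cost of carrying the full symmetric-cube bookkeeping through all three arguments.
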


	\begin{Cor}
		We have the clean single-valued identity
		\begin{align*}
		\sum_{i = 1}^{r} \SCS_{3,2}(p_i(t))  - \sum_{i=1}^{r} & \bigg\{ \frac{b-a}{b} \LiCS_5(p_i(t)) + \frac{b}{a} \LiCS_5(1 - p_i(t)) + \frac{b}{a+b} \LiCS_5(1 - p_i(t)^{-1}) \bigg\} \\[1ex]
		& \quad \= \begin{cases}
			2a \zeta(5) & \text{if $b > 0$\,,} \\
			-2b \zeta(5) & \text{if $-a < b < 0$\,,} \\
			-2(a+b) \zeta(5) & \text{if $ b < -a$\,.} \\
		\end{cases} 
		\end{align*}
		
		\begin{proof}
			Consider the limit \( t \to 0 \) and use \( \LiCS_5(0) = \LiCS_5(\infty) = 0 \), \( \LiCS_5(1) = 2 \zeta(5) \) and \( \SCS_{3,2}(0) = 0 \), \( \SCS_{3,2}(1) = \SCS_{3,2}(\infty) = 2\zeta(5) \).
			
			If \( b > 0 \), we obtain roots \( p_i = 0 \) with multiplicity \( a \) and \( p_i = 1 \) with multiplicity \( b \), giving the constant \( 2a\zeta(5) \).  If \( -a < b < 0 \), we obtain roots \( p_i = 0 \) with multiplicity \( a \), giving the constant \( -2b\zeta(5) \).  Finally if \( b < -a \), we obtain roots \( p_i = 0 \) with multiplicity \( a \) and roots \( p_i = \infty \) with multiplicity \( -b-a \), giving the constant \( -2(a+b)\zeta(5) \).
		\end{proof}
	\end{Cor}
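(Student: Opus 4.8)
The plan is to combine the mod-products symbol identity of Proposition~\ref{prop:s32alg} with the clean single-valued lifting recalled earlier in this section. Since $\SCS_{3,2}$ and $\LiCS_5$ are exactly the clean single-valued versions of $S_{3,2}$ and $\Li_5 = S_{4,1}$, and Proposition~\ref{prop:s32alg} asserts that the mod-products symbol of
\[
\sum_{i} S_{3,2}(p_i(t)) - \sum_{i}\Big\{ \tfrac{b-a}{b}\Li_5(p_i(t)) + \tfrac{b}{a}\Li_5(1-p_i(t)) + \tfrac{b}{a+b}\Li_5(1-p_i(t)^{-1})\Big\}
\]
vanishes identically in $t$, the lifting result guarantees that the clean single-valued combination appearing on the left of the Corollary is locally constant in $t$. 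It therefore suffices to evaluate this constant, and the natural place to do so is the degenerate limit $t \to 0^+$, where the roots $p_i(t)$ of $x^a(1-x)^b = t$ collapse onto $\{0,1,\infty\}$.

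First I would carry out the root count and degeneration analysis in the three sign regimes. For $b>0$ the defining equation is a degree-$(a+b)$ polynomial, and as $t\to0$ exactly $a$ roots tend to $0$ and $b$ roots tend to $1$. For $-a<b<0$ one rewrites it as $x^a = t(1-x)^{-b}$, a polynomial of degree $a$ (as $a>-b$) all of whose roots tend to $0$. For $b<-a$ the degree is $-b$, but the top coefficient is proportional to $t$ and so degenerates as $t\to0$: thus $a$ roots tend to $0$ while the remaining $-b-a$ roots escape to $\infty$.

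Next I would feed in the boundary behaviour of each summand. Under $p\mapsto(1-p,\,1-p^{-1})$ the triple of arguments tends to $(1,\infty)$, $(0,0)$ and $(\infty,1)$ as $p\to 0,1,\infty$ respectively, so by continuity and the values $\LiCS_5(0)=\LiCS_5(\infty)=0$, $\LiCS_5(1)=2\zeta(5)$, together with $\SCS_{3,2}(0)=0$, $\SCS_{3,2}(\infty)=2\zeta(5)$ and $\SCS_{3,2}(1)=4\zeta(5)$, the single summand $\SCS_{3,2}(p_i) - \tfrac{b-a}{b}\LiCS_5(p_i) - \tfrac{b}{a}\LiCS_5(1-p_i) - \tfrac{b}{a+b}\LiCS_5(1-p_i^{-1})$ tends to $F(0)=-\tfrac{2b}{a}\zeta(5)$, $F(1)=\tfrac{2(a+b)}{b}\zeta(5)$ or $F(\infty)=\tfrac{2a}{a+b}\zeta(5)$ according as $p_i\to 0,1,\infty$. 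Multiplying by the multiplicities from the previous step gives $aF(0)+bF(1)=2a\zeta(5)$ when $b>0$, $aF(0)=-2b\zeta(5)$ when $-a<b<0$, and $aF(0)+(-b-a)F(\infty)=-2(a+b)\zeta(5)$ when $b<-a$, which is the claimed list of constants.

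The step I expect to be the main obstacle is the careful determination of the boundary values, above all $\SCS_{3,2}(1)$. The explicit formula for $\SCS_{n,2}$ contains terms carrying $\log(1-z)$ and powers of $\log\abs{z}^2$ that individually diverge as $z\to1$ or $z\to0$, so one must verify that these apparent singularities cancel before a genuine value can be substituted: as $z\to1$ the offending factor $\Li_4(z)-\Li_4(\bar z)$ vanishes on the real axis and the $\log^{k}\abs{z}^2$ factors suppress the $\Li_1$ pole, and one should check path-independence of the limit. Evaluating what survives, $\SCS_{3,2}(1)$ receives $2S_{3,2}(1)=2\big(2\zeta(5)-\zeta(2)\zeta(3)\big)$ from its main term and $+2\zeta(2)\zeta(3)$ from the single-valued zeta terms, so the products cancel and $\SCS_{3,2}(1)=4\zeta(5)$; the value $\SCS_{3,2}(\infty)=2\zeta(5)$ then follows from the clean single-valued inversion relation together with $\SCS_{3,2}(0)=0$. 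A secondary, routine point is to confirm that the clean single-valued combination is genuinely constant across the connected components of the locus traversed as $t$ runs over $(0,\infty)$, so that the single limit $t\to0^+$ computes the constant in each regime.
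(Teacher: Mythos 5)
Your argument is essentially the paper's: Proposition \ref{prop:s32alg} gives the vanishing of the mod-products symbol, the clean single-valued lifting makes the combination constant in \( t \), and the constant is evaluated in the limit \( t \to 0 \) with exactly the same root-multiplicity count in the three sign regimes (\(a\) roots to \(0\) and \(b\) roots to \(1\) for \(b>0\); \(a\) roots to \(0\) for \(-a<b<0\); \(a\) roots to \(0\) and \(-b-a\) roots to \(\infty\) for \(b<-a\)). There is, however, one genuine discrepancy, and it is in your favour: the paper's proof asserts \( \SCS_{3,2}(1) = 2\zeta(5) \), whereas you compute \( \SCS_{3,2}(1) = 4\zeta(5) \). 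Your value is the correct one. It follows from the paper's own clean single-valued reflection formula \( \SCS_{n,p}(1) = \tfrac{1}{n+p}\binom{n+p}{n}\zeta^{\sv}(n+p) \), which at \( (n,p) = (3,2) \) gives \( \tfrac{1}{5}\binom{5}{3}\cdot 2\zeta(5) = 4\zeta(5) \), and it matches your direct limit computation, in which the \( -2\zeta(2)\zeta(3) \) coming from \( 2 S_{3,2}(1) \) cancels against the \( \zeta(3)\Li_2 \)-type terms of the explicit \( \SCS_{n,2} \) formula. Moreover, the paper's stated value is internally inconsistent with its own conclusion: with \( \SCS_{3,2}(1) = 2\zeta(5) \) the per-root contribution at \( p_i \to 1 \) would be \( \tfrac{2a}{b}\zeta(5) \), so the \( b>0 \) case would yield \( a\cdot\bigl(-\tfrac{2b}{a}\bigr)\zeta(5) + b\cdot\tfrac{2a}{b}\zeta(5) = 2(a-b)\zeta(5) \), not the claimed \( 2a\zeta(5) \); with your value \( F(1) = \tfrac{2(a+b)}{b}\zeta(5) \), all three constants come out exactly as stated (the other two regimes only involve \( F(0) \) and \( F(\infty) \), on which you and the paper agree). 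Your care with the apparent divergences of the explicit \( \SCS_{n,2} \) formula as \( z \to 1 \), and the remark that constancy on the connected \( t \)-locus is needed before evaluating at \( t \to 0^+ \), are both sound. In short: same method, but your proof repairs an erroneous boundary value in the paper's own argument.
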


	\begin{proof}[Proof of Proposition]
		For simplicity, we again drop the notation \( \Symbsh \) from tensors for these calculations.  We also write \( p_i  = p_i(t) \) for simplicity.  Then \( 1-p_i = t^{1/b} p_i^{-a/b} \) from our observation in Section \ref{sec:algfe}.  So the mod-products symbol of the left hand side is
		\[
			\sum_{i=1}^{r} \Big\{ \Big( - \Li_4(1-p_i) + \Li_4(p_i) - \Li_4(1-p_i^{-1}) \Big) \otimes p_i + \frac{1}{b} \Li_4(p_i) \otimes t - \frac{a}{b} \Li_4(p_i) \otimes p_i \Big\}\,.
		\]
		The mod-products symbol of the right hand side is
		\begin{align*}
			\sum_{i=1}^{r} \bigg\{ 
				& \frac{b-a}{b} \Li_4(p_i) \otimes p_i + \frac{b}{a} \bigg( \frac{1}{b} \Li_4(1-p_i) \otimes t - \frac{a}{b} \Li_4(1-p_i) \otimes p_i \bigg) \\
				& + \frac{b}{a+b} \bigg( \frac{1}{b} \Li_4(1-p_i^{-1}) \otimes t - \frac{a}{b} \Li_4(1-p_i^{-1}) \otimes p_i - \Li_4(1-p_i^{-1}) \otimes p_i \bigg)
				\bigg\}\,.
		\end{align*}
		
		In the difference of the left hand side and right hand side, all terms ending \( \otimes p_i \) cancel.  We are left with
		\[
			\sum_{i=1}^{r} \bigg\{ \frac{1}{b} \Li_4(p_i) - \frac{1}{a} \Li_4(1-p_i) - \frac{1}{a+b} \Li_4(1-p_i^{-1}) \bigg\} \otimes t = 0 \, ,
		\]
		since the expression in brackets is already the algebraic \( \Li_4 \) functional equation.		
	\end{proof}

	\subsection{Five-term identity}
	Our main result is that \( S_{3,2} \), evaluated on the five-term relation, can be reduced to explicit \( \Li_5 \) terms.  On account of the known two-term inversion and reflection identities for \( S_{3,2} \) in Propositions \ref{prop:s32inv} and \ref{prop:s32twoterm} above, we can without loss of generality fully antisymmetrise the five-term relation over \( \mathfrak{S}_5 \). Here and below we use the notation
    \[f\big(\sum\nolimits_{j}\nu_j[x_j]\big) \coloneqq \sum\nolimits_{j}\nu_jf(x_j)\,,\]
    i.e. we extend functions to formal linear 
    combinations $\sum_{j}\nu_j[x_j]$ by linearity.
	
\begin{Thm}[\( S_{3,2} \) of the five-term relation] \label{thm:s32fiveterm} 
	For indeterminates \( x_1,\ldots,x_5 \), we have the following 
    identity between the mod-products symbols of 
	$S_{3,2}$ and $\Li_5$ in \( \bigotimes_{i=1}^5 \QQ(x_1,\ldots,x_5)^{\times} \)
	\begin{equation} \label{eq:fivetermsym}
	\Alt_{5} \bigg(11 S_{3,2}(\CR(x_1, x_2, x_3, x_4))
	+\Li_5\Big(15[\R_1(x_1,\ldots,x_5)] 
	- 9[\R_2(x_1,\ldots,x_5)] 
	+ [\R_3(x_1,\ldots,x_5)]\Big) \bigg) \modsh 0 \, .
 	\end{equation}
	Here 
	\begin{align*}
		\CR(x_1,x_2,x_3,x_4) &\ceq \frac{(x_1-x_3)(x_2-x_4)}{(x_1-x_4)(x_2-x_3)}
	\intertext{is the classical cross-ratio, and \( \R_1, \R_2, \R_3 \) are the following `higher' ratios}
	\R_1(x_1,\ldots,x_5) &\ceq  
	-\frac{(x_{1}-x_{2}) (x_{1}-x_{4}) (x_{3}-x_{5})}
	{(x_{1}-x_{3}) (x_{1}-x_{5}) (x_{2}-x_{4})} 
	\,, \\
	\R_2(x_1,\ldots,x_5) &\ceq  
	-\frac{(x_{1}-x_{2})^2(x_{3}-x_{4}) (x_{3}-x_{5})}
	{(x_{1}-x_{3})  (x_{1}-x_{4}) (x_{2}-x_{3}) (x_{2}-x_{5})} 
	\,, \\
	\R_3(x_1,\ldots,x_5) &\ceq  
	-\frac{(x_{1}-x_{2})^3 (x_{1}-x_{5}) (x_{3}-x_{4})^2 (x_{3}-x_{5})}
	{(x_{1}-x_{3})^3 (x_{1}-x_{4}) (x_{2}-x_{4}) (x_{2}-x_{5})^2} 
	\,.
	\end{align*}
\end{Thm}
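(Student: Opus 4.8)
The plan is to establish \eqref{eq:fivetermsym} directly on the level of mod-products symbols, working in the finite-dimensional subspace of $\bigotimes_{i=1}^{5}\QQ(x_1,\ldots,x_5)^{\times}$ spanned by the finitely many irreducible polynomials that actually occur. Since $\Symbsh$ is linear, kills products, and respects functional equations, it suffices to compute the two antisymmetrised symbols and check that they agree. By Lemma \ref{lem:snpmodsymb} the two weight-$5$ building blocks are
\[
\Symbsh S_{3,2}(z) = \Symbsh\Li_2(z)\otimes\big((1-z)\shuffle (z\otimes z)\big),\qquad \Symbsh\Li_5(z) = \Symbsh\Li_2(z)\otimes z\otimes z\otimes z,
\]
with $\Symbsh\Li_2(z) = -(1-z)\wedge z$. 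Thus every term on both sides carries the common dilogarithm symbol $\Symbsh\Li_2$ in its first two slots, and the computation splits into the behaviour of this $\Lambda^2$-factor and of the cubic tail.

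Writing $(ij) \coloneqq x_i - x_j$, I would first substitute the cross-ratio: a short calculation gives $1-\CR = (12)(34)/\big((14)(23)\big)$ up to sign, so that $\CR$, $1-\CR$, and hence the entire symbol $\Symbsh S_{3,2}(\CR)$, are expressed purely in the linear letters $(ij)$. By contrast, although each $\R_k$ itself factors into the $(ij)$, the factor $1-\R_k$ introduces genuinely new irreducible letters (for instance the numerator of $1-\R_1$ is an irreducible cubic in $x_1,\ldots,x_5$). Crucially, such higher-degree letters can only enter through the $(1-z)$-slot of $\Symbsh\Li_5$, i.e.\ in one of the first two tensor positions; and since $\Alt_5 = \sum_{\sigma\in\mathfrak{S}_5}\operatorname{sgn}(\sigma)\,\sigma$ acts by relabelling variables \emph{within} letters and so preserves the tensor-slot structure, these non-linear letters stay confined to slots one and two throughout. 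The heart of the proof is therefore that, after antisymmetrisation, all contributions carrying a non-linear letter cancel among the three $\Li_5$ families with the prescribed weights $15,-9,1$, leaving an antisymmetric tensor in the $(ij)$ alone.

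The decisive computation is that of $\Alt_5\,\Symbsh S_{3,2}(\CR)$. Using the shuffle formula for the tail and expanding each $(ij)$, this is an explicit antisymmetric element of $V^{\otimes 5}$, $V = \bigoplus_{i<j}\QQ\,(ij)$, which I would put into a normal form by choosing coset representatives for $\mathfrak{S}_5$ and exploiting the antisymmetry of the $\Li_2$-factor; the classical five-term relation \eqref{eq:fiveterm} together with the cobracket computation $\delta S_{3,2}(z) = \{z\}_2 \wedge \{1\}_3$ preceding the theorem guarantees that $\delta\,\Alt_5 S_{3,2}(\CR)$ vanishes in its $2$-part, so that the antisymmetrised symbol genuinely lies in the span of the $\Li_5$ symbols. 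One then computes $\Alt_5\,\Symbsh\Li_5(\R_k)$ for $k=1,2,3$ in the same letters, checks that $15\,\Alt_5\Symbsh\Li_5(\R_1) - 9\,\Alt_5\Symbsh\Li_5(\R_2) + \Alt_5\Symbsh\Li_5(\R_3)$ has vanishing coefficient on every tensor monomial containing a non-linear letter, and finally matches the surviving linear-letter part against $11\,\Alt_5\Symbsh S_{3,2}(\CR)$. As only finitely many irreducible polynomials appear, this is a finite, if sizeable, linear-algebra verification.

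I expect the main obstacle to be twofold. First, organising and carrying out the $\mathfrak{S}_5$-antisymmetrisation of a weight-$5$ symbol is computationally heavy and realistically needs computer algebra, or very careful bookkeeping of tensor monomials grouped by their irreducible letters. Second, and more conceptually, one must verify the cancellation of the higher-degree irreducible factors arising from $1-\R_1$, $1-\R_2$ and $1-\R_3$: it is precisely this cancellation that pins down the specific ratios $\R_1,\R_2,\R_3$ and the coefficient vector $(15,-9,1)$ together with the normalisation $11$, so that \emph{finding} these data — e.g.\ by first computing $\Alt_5\Symbsh S_{3,2}(\CR)$ and then solving for a $\Li_5$-preimage within an ansatz of higher ratios — is the genuinely delicate step. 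The two-term relations of Propositions \ref{prop:s32inv} and \ref{prop:s32twoterm}, and the representation-theoretic analysis of Section \ref{sec:spanningset}, ensure that passing to the fully antisymmetric projection loses no information, so that a successful symbol match proves the full statement $\modsh 0$.
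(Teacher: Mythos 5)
Your strategy coincides in outline with the paper's: both verify \eqref{eq:fivetermsym} directly on mod-products symbols, and both hinge on the observation that the only non-linear letters are the irreducible factors of the numerators of \(1-\R_j\), which can occur only in the first two tensor slots and must cancel under \(\Alt_5\) with the weights \((15,-9,1)\). However, your proposal defers precisely the two computations that constitute the paper's proof, and it omits the structural fact that makes the first of them possible: writing \(\pi_j=\operatorname{numerator}(1-\R_j)\) for \(j=1,2\), one has \(\operatorname{numerator}(1-\R_3)=\pi_1\pi_2\); without this factorisation the non-linear letters from \(1-\R_3\) could not cancel against those from \(1-\R_1\) and \(1-\R_2\) at all. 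The paper then exploits that each \(\pi_j\) is stabilised up to sign by an explicit subgroup \(G_j\subset\mathfrak{S}_5\) (with \(G_1\) of order \(8\) and \(G_2\) of order \(3\)), so the coefficient of \(\pi_j\) in the antisymmetrised symbol is governed by \(\Alt_{G_j}\) of the tails, and the required vanishing \(\Alt_{G_1}(15\,\R_1^{\otimes4}+\R_3^{\otimes4})=0\), \(\Alt_{G_2}(-9\,\R_2^{\otimes4}+\R_3^{\otimes4})=0\) reduces to two elementary quartic polynomial identities. For the remaining cross-ratio part, rather than a raw linear-algebra match the paper works in \(\bigwedge^2 V\otimes\Sym^3 V\) for the \(5\)-dimensional \(\mathfrak{S}_5\)-representation \(V\) spanned by the cross-ratios \(u_j\), splits \(\bigwedge^2 V=V_4\oplus V_6\), shows the \(V_6\)-component of the \(\Li_5\) part vanishes, and matches the \(V_4\)-component against \(11\,\Alt_5 S_{3,2}(\CR)\) after symmetrising over \(\mathfrak{S}_4\). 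A computer verification along your lines would also succeed, but as written your text is a plan for that computation rather than a proof.

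One logical caution: you invoke the vanishing of the \(2\)-part of \(\delta\,\Alt_5 S_{3,2}(\CR)\) to ``guarantee'' that the antisymmetrised symbol lies in the span of \(\Li_5\) symbols. That implication is not a theorem; it is the content of Goncharov's depth conjectures, and even the route via Brown's representation-theoretic results requires an integrability argument (see the Remark following the theorem). Fortunately you do not need it: the statement supplies the explicit \(\Li_5\) combination, so only the direct verification matters, and the existence claim should be dropped from the argument rather than relied upon.
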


\begin{Cor}	\label{cor:s32fiveterm}
	For \( x_1,\ldots,x_5 \in \mathbb{P}^1(\CC) \) we have the following
    identity for the clean single-valued functions
	\begin{equation*} 
	\Alt_{5} \bigg(11 \SCS_{3,2}(\CR(x_1, x_2, x_3, x_4))
	+\LiCS_5\Big(15[\R_1(x_1,\ldots,x_5)] 
	- 9[\R_2(x_1,\ldots,x_5)] 
	+ [\R_3(x_1,\ldots,x_5)]\Big) \bigg) = 0 \, .
	\end{equation*}
	
	\begin{proof}
		By the antisymmetry, the constant in the clean single-valued identity must be 0.
	\end{proof}
\end{Cor}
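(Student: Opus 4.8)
The plan is to deduce the corollary directly from Theorem~\ref{thm:s32fiveterm} together with the lifting property of clean single-valued functions recorded just after the definition of $\SCS_{n,2}$ (which itself rests on the main theorem of \cite{Ch-Du-Ga}). The only genuinely new ingredient beyond invoking these two results is the observation that full antisymmetrisation forces the otherwise-undetermined integration constant to vanish.

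First I would set $G(x_1,\ldots,x_5) \ceq 11\,\SCS_{3,2}(\CR(x_1,x_2,x_3,x_4)) + \LiCS_5\big(15[\R_1] - 9[\R_2] + [\R_3]\big)$, so that the left-hand side of the corollary is exactly $\Alt_5 G$. By Theorem~\ref{thm:s32fiveterm} the mod-products symbol $\Symbsh(\Alt_5 G)$ vanishes. I would then check that the lifting property still applies here, even though the arguments $\CR$ and $\R_j$ are rational functions of $x_1,\ldots,x_5$ rather than single variables: both $\Symbsh$ and Brown's single-valued map are functorial under pullback along rational maps, so $\SCS_{3,2}(\CR(\,\cdot\,))$ and $\LiCS_5(\R_j(\,\cdot\,))$ are again clean single-valued functions of $x_1,\ldots,x_5$. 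Consequently the vanishing of $\Symbsh(\Alt_5 G)$ forces $\Alt_5 G$ to equal a constant $C$ (a combination of single-valued zeta values, independent of the $x_i$).

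The key step is then to evaluate $C$. By construction the operator $\Alt_5$ produces a function that is genuinely antisymmetric under the $\mathfrak{S}_5$-action on $(x_1,\ldots,x_5)$, that is, $(\Alt_5 G)\circ\sigma = \operatorname{sgn}(\sigma)\,(\Alt_5 G)$ for every $\sigma\in\mathfrak{S}_5$. Applying this to any transposition $\tau$ and using $\Alt_5 G = C$ gives $C = \operatorname{sgn}(\tau)\,C = -C$, whence $2C=0$ and so $C=0$. This is precisely the one-line argument: the symbol-level identity of Theorem~\ref{thm:s32fiveterm} determines the clean single-valued combination only up to a constant, and it is the antisymmetry alone that kills that constant.

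The main (and only mild) obstacle I anticipate is the careful handling of the domain together with the functoriality claim: one must ensure that the clean single-valued functions are well-defined and real-analytic away from the loci where some $x_i=x_j$ or where the ratios hit $0,1,\infty$, and that the ``constant'' conclusion of the lifting theorem is unambiguous across this domain. Since the single-valued polylogarithms extend continuously across these divisors, taking the standard boundary values with $\LiCS_5(0)=\LiCS_5(\infty)=0$ and $\SCS_{3,2}(0)=0$ as already used in the corollary to Proposition~\ref{prop:s32alg}, the relevant complement in $\mathbb{P}^1(\CC)^5$ is connected enough that $C$ is well-defined, and the antisymmetry argument then applies verbatim to give $C=0$.
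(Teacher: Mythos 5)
Your proposal is correct and takes essentially the same route as the paper: the paper's proof likewise combines Theorem~\ref{thm:s32fiveterm} with the lifting property of the clean single-valued functions to conclude that the alternated combination equals a constant, and then kills that constant by the antisymmetry --- precisely your observation that \( C = \operatorname{sgn}(\tau)\,C = -C \) forces \( C = 0 \). Your additional remarks on functoriality under pullback and on the domain are sensible elaborations of what the paper's one-line proof leaves implicit, but they introduce no new idea.
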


\begin{Rem}
	With some work, one can in fact give an analytic version of Corollary \ref{cor:s32fiveterm}, which holds for the complex analytic \( S_{3,2} \) and \( \Li_5 \) functions, not just their single-valued versions.
\end{Rem}

\begin{proof}[Proof of Theorem]
	Let us define the polynomials $\pi_1$ and $\pi_2$ by
	$\pi_j = \operatorname{numerator}(1-\R_j)$. One can easily check
	that $G_j\subset\mathfrak{S}_5$ fixes $\pi_j$ (up to sign), where
	\[G_1 = \langle (23), (2435)\rangle\,,
	\quad\quad\quad
	G_2 = \langle(123)\rangle\,.\]
	We have $|G_1|=8$, and $|G_2|=3$.
	It is also easy to check that $\operatorname{numerator}(1-\R_3) = \pi_1\pi_2$.
	We claim that
	\begin{align} \label{eq:pi1pi2vanish}
	\begin{split}
	\Alt_{G_1} 
	\Big(
	\phantom{{}-9} \mathllap{15} % formatting
	 \,\R_1^{\otimes 4} + \R_3^{\otimes 4}\Big) = 0\,,\\
	\Alt_{G_2} 
	\Big(-9\,\R_2^{\otimes 4} + \R_3^{\otimes 4}\Big) = 0 \,.
	\end{split}
	\end{align}
	To see this, let us denote $\Ss_1=\sigma_{(45)}(\R_1)$,
	$\Ss_2=\sigma_{(123)}(\R_2)$, 
	where $\sigma_{g}$ denotes the action of $g\in\mathfrak{S}_5$
	on $\QQ(x_1,\dots,x_5)$.
	Note the identities
	\[\R_3 = \R_1 \cdot\, \Ss_1^2 = \R_2^2 \cdot\, \Ss_2\,.\]
	The group $G_1$ acts in the following way on 
	$\R_1$ and $\Ss_1$:
	\begin{alignat*}{4}
	\sigma_{e}(\R_1) &{}= \R_1\,,\quad
	&\sigma_{(23)(45)}(\R_1) &{}= \R_1^{-1}\,,\quad
	&\sigma_{(24)(35)}(\R_1) &{}= \R_1\,,\quad
	&\sigma_{(25)(34)}(\R_1) &{}= \R_1^{-1}\,,\\
	\sigma_{e}(\Ss_1) &{}= \Ss_1\,,\quad
	&\sigma_{(23)(45)}(\Ss_1) &{}= \Ss_1^{-1}\,,\quad
	&\sigma_{(24)(35)}(\Ss_1) &{}= \Ss_1^{-1}\,,\quad
	&\sigma_{(25)(34)}(\Ss_1) &{}= \Ss_1\,,\\
	\sigma_{(23)}(\R_1) &{}= \Ss_1^{-1}\,,\quad
	&\sigma_{(45)}(\R_1) &{}= \Ss_1\,,\quad
	&\sigma_{(2435)}(\R_1) &{}= \Ss_1^{-1}\,,\quad
	&\sigma_{(5342)}(\R_1) &{}= \Ss_1\,,\\
	\sigma_{(23)}(\Ss_1) &{}= \R_1^{-1}\,,\quad
	&\sigma_{(45)}(\Ss_1) &{}= \R_1\,,\quad
	&\sigma_{(2435)}(\Ss_1) &{}= \R_1\,,\quad
	&\sigma_{(5342)}(\Ss_1) &{}= \R_1^{-1}\,.
	\end{alignat*}
	(That is, the representation of $G_1$ on the multiplicative
	group generated by $\R_1,\Ss_1$ is isomorphic to 
	the standard $2$-dimensional representation of the dihedral group $D_4$.)
	Thus the first identity in~\eqref{eq:pi1pi2vanish} holds by
	\begin{align*}
	\Alt_{G_1} 
	\Big(15 \R_1^{\otimes 4} 
	{}+ \R_3^{\otimes 4}\Big) 
	\= 60 \R_1^{\otimes 4} - 60 \Ss_1^{\otimes 4}
	{}+2(\R_1\Ss_1^2)^{\otimes 4}
	{}+2\Big(\frac{\R_1}{\Ss_1^{2}}\Big)^{\otimes 4}
	{}-2(\Ss_1\R_1^2)^{\otimes 4}
	{}-2\Big(\frac{\Ss_1}{\R_1^{2}}\Big)^{\otimes 4} \= 0
	\,,
	\end{align*}
	where the vanishing is equivalent to 
	the following easily checked polynomial identity
	\[2(X+2Y)^4+2(X-2Y)^4-2(2X+Y)^4-2(2X-Y)^4 \= 60(Y^4-X^4)\,.\]
	Similarly, for $G_2$ we have
	\begin{alignat*}{3}
	\sigma_{e}(\R_2) &{}= \R_2\,,\quad
	&\sigma_{(123)}(\R_2) &{}= \Ss_2\,,\quad
	&\sigma_{(321)}(\R_2) &{}= (\R_2\Ss_2)^{-1} \,,\\
	\sigma_{e}(\Ss_2) &{}= \Ss_2\,,\quad
	&\sigma_{(123)}(\Ss_2) &{}= (\R_2\Ss_2)^{-1} \,,\quad
	&\sigma_{(321)}(\Ss_2) &{}= \R_2\,.
	\end{alignat*}
	Thus
	\begin{align*}
	\Alt_{G_2} 
	\Big(-9 \R_2^{\otimes 4} 
	{}+ \R_3^{\otimes 4}\Big)
	\= -9 \R_2^{\otimes 4}
	{}-9\Ss_2^{\otimes 4}
	{}-9(\R_2^{-1}\Ss_2^{-1})^{\otimes 4}
	{}+(\R_2\Ss_2^2)^{\otimes 4}
	{}+(\R_2^{2}\Ss_2)^{\otimes 4}
	{}+\Big(\frac{\R_2}{\Ss_2}\Big)^{\otimes 4} \= 0
	\,,
	\end{align*}
	since
	\[(X-Y)^4+(2X+Y)^4+(X+2Y)^4 \= 9(X^4+Y^4+(X+Y)^4)\,.\]
	Since $G_j \subset \Aut(\pi_j)$,
	$G_1\subset \Aut(x_{12}x_{13}x_{14}x_{15}x_{23}x_{45})$,
	and $G_2\subset \Aut(x_{12}x_{13}x_{23}x_{45})$, 
	where $x_{ij} \ceq x_i-x_j$, we obtain that
	\begin{align*}
	\Alt_{G_1} \bigg(
	\frac{\pi_1^2}{x_{12}x_{13}x_{14}x_{15}x_{23}x_{45}}\otimes \Big(
	\phantom{{}-9} \mathllap{15} % formatting
	\,\R_1^{\otimes 4} 
	+ \R_3^{\otimes 4}\Big)\bigg) \= 0\,,\\
	\Alt_{G_2} \bigg(
	\frac{\pi_2}{x_{12}x_{13}x_{23}x_{45}}\otimes \Big(-9\,\R_2^{\otimes 4}
	+\R_3^{\otimes 4}\Big)\bigg) \= 0 \,.
	\end{align*}
	Since  $\Symbsh\big( \Li_5(z) \big) \= -(1-z)\wedge z\otimes z^{\otimes 3}$, 
	where again $a\wedge b \= a\otimes b - b\otimes a$, we see 
	from these two identities that the mod-products symbol of the $\Li_5$ 
	part of~\eqref{eq:fivetermsym} is equal to 
	\begin{gather*}
	\Alt_5\Big(-\frac{15}{2}\,\frac{x_{12}x_{13}x_{14}x_{15}x_{23}x_{45}}
	{x_{12}^2x_{14}^2x_{35}^2}\wedge \R_1
	\otimes \R_1^{\otimes 3}
	\;{}+9\,\frac{x_{12}x_{13}x_{23}x_{45}}{x_{13}x_{14}x_{23}x_{25}}\wedge \R_2
	\otimes \R_2^{\otimes 3} \\[1ex]
	{}-\frac{1}{2}\,\frac{x_{12}^3x_{13}^3x_{23}^3x_{45}^3x_{14}x_{15}}{x_{13}^6x_{14}^{2}x_{24}^{2}x_{25}^{4}}
	\wedge \R_3 \otimes \R_3^{\otimes 3}
	\Big)\,.
	\end{gather*}
	Since we are working modulo $2$-torsion, and 
	since $\sigma_{(35)}(\R_1) = -\R_1$, we get that 
	$\Alt_5\big( a\wedge \R_1\otimes \R_1^{\otimes 3} \big) = 0$ whenever
	$\sigma_{(35)}(a) = \pm a$, and hence
	\[\Alt_5\Big( \frac{x_{12}x_{13}x_{14}x_{15}x_{23}x_{45}}
	{x_{12}^2x_{14}^2x_{35}^2}\wedge \R_1
	\otimes \R_1^{\otimes 3}\Big) \= 
	\Alt_5\Big(\frac{x_{23}x_{45}}{x_{24}x_{35}}
	\wedge \R_1 \otimes \R_1^{\otimes 3} \Big)
	\= \Alt_5 \Big( [2534] \wedge \R_1 \otimes \R_1^{\otimes 3} \Big) \,,\]
	where we denote by $[ijkl]$ the cross-ratio 
	$\frac{x_{ik}x_{jl}}{x_{il}x_{jk}}$ (note that 
    after canceling all $\sigma_{(35)}$-invariant terms we
    have added the denominator $x_{24}x_{35}$, which
    is also $\sigma_{(35)}$-invariant, up to sign).
	For the $\R_3$ term we compute
	\[\frac{x_{12}^3x_{23}^3x_{45}^3x_{15}}
	{x_{13}^3x_{14}x_{24}^{2}x_{25}^{4}}
	\wedge \R_3 
	\= \frac{x_{23}^3x_{45}^3}{x_{24}x_{35}x_{25}^{2}x_{34}^{2}}
	\wedge \R_3 
	\= [2435]^2[2534] \wedge \R_3\,.\]
	From these observations we see that the mod-products symbol
	of the $\Li_5$ part of~\eqref{eq:fivetermsym} is equal to
	\begin{align} \label{eq:li5part}
	\Alt_5\Big(
	-\frac{15}{2}\,[2534]\wedge \R_1\otimes \R_1^{\otimes 3}
	\;{}+9\,[1524]\wedge \R_2 \otimes \R_2^{\otimes 3}
	{}-\frac{1}{2}\,[2435]^2[2534] 
	\wedge \R_3 \otimes \R_3^{\otimes 3}
	\Big)\,.
	\end{align}
	
	Next, we introduce the variables $u_j \ceq [j,j+1,j+2,j+3]$, 
	where as before $[ijkl] \ceq \CR(x_i,x_j,x_k,x_l)$ and all indices 
	are written	modulo~$5$.
	The action of $\mathfrak{S}_5$ on $u_j$ gives rise to an
	irreducible $5$-dimensional representation~$V$ (written multiplicatively),
	in which $\sigma_{(12345)}(u_j)=u_{j+1}$ 
	and $\sigma_{(12)}$ acts by
	\[(u_1,u_2,u_3,u_4,u_5)\;\mapsto\; \Big(u_1^{-1},u_2u_4,u_1u_3,u_4^{-1},-\frac{u_5}{u_1u_4}\Big)\,.\]
	 Since $\Symbsh\big(S_{3,2}(z)\big)= -\big((1-z)\wedge z\big)\otimes \big((1-z)\shuffle (z\otimes z)\big)$, we have under the mod-products symbol
	\begin{equation} \label{eq:s32part}
	\Alt_{5} S_{3,2}([1234])
	\= 
	\Alt_{5}\Big( -(u_1\wedge u_2u_5)\otimes (\frac{u_1}{u_2u_5}\shuffle (u_1\otimes u_1)) \Big) \quad {} \in \bigwedge\nolimits^2V\otimes \Sym^3(V)\, .
	\end{equation}
	Therefore,~\eqref{eq:fivetermsym} is an identity in
	the skew-symmetric part of an $\mathfrak{S}_5$-module 
	$\bigwedge\nolimits^2V\otimes \Sym^3(V)$. The 
	$10$-dimensional representation $\bigwedge\nolimits^2V$ decomposes into a direct 
	sum $V_4\oplus V_6$ of a $4$-dimensional and a $6$-dimensional 
	irreducible representation. We can take the basis for $V_4$ to be
	$u_j\wedge u_{j-1}u_{j+1}$, $j=1,\dots,4$,
	and the basis for $V_6$ to be given by $w$ and $\sigma_{(j,j+1)}(w)$, 
	$j=1,\dots,5$, where
	\[w = \frac{1}{5}\Big(u_1\wedge u_2+u_2\wedge u_3+u_3\wedge u_4+u_4\wedge u_5+u_5\wedge u_1\Big)\,.\]
	First, we want to show that~\eqref{eq:li5part} projects trivially onto
	$V_6\otimes \Sym^3(V)$. We compute
	\begin{align*}
	\pr_{V_6}([2534]\wedge \R_1) &= \sigma_{(12)}(w)+\sigma_{(51)}(w)\,,\\
	\pr_{V_6}([1524]\wedge \R_2) &= 2w+\sigma_{(23)}(w)+\sigma_{(34)}(w)+2\sigma_{(51)}(w)\,,\\
	\pr_{V_6}([2435]^2[2534] \wedge \R_3) &= 4w-3\sigma_{(12)}(w)+6\sigma_{(23)}(w)-2\sigma_{(45)}(w)+9\sigma_{(51)}(w)\,.
	\end{align*}
	From this we see that the projection of~\eqref{eq:li5part} onto
	$V_6\otimes \Sym^3(V)$ is equal to
	\begin{align*} 
	\Alt_5 \bigg( w\otimes\Big(
	&\frac{15}{2}\sigma_{(12)}(\R_1)^{\otimes 3}
	{}+\frac{15}{2}\sigma_{(51)}(\R_1)^{\otimes 3}
	{}+18\R_2^{\otimes 3}
	{}-9\sigma_{(23)}(\R_2)^{\otimes 3}
	{}-9\sigma_{(34)}(\R_2)^{\otimes 3}\\
	&{}-18\sigma_{(51)}(\R_2)^{\otimes 3}
	{}-2\R_3^{\otimes 3}
	{}-\frac{3}{2}\sigma_{(12)}(\R_3)^{\otimes 3}
	{}+3\sigma_{(23)}(\R_3)^{\otimes 3}
	{}-\sigma_{(45)}(\R_3)^{\otimes 3}
	{}+\frac{9}{2}\sigma_{(51)}(\R_3)^{\otimes 3}
	\Big)\bigg)\,.
	\end{align*}
	Factorising in terms of $u_j$ and switching to additive 
	notation with indeterminate $U_j$ corresponding to~$u_j$, 
	we can rewrite the last expression as
	\begin{align*} 
	\Alt_5 \bigg( w\otimes\Big(
	&\frac{15}{2}(U_1-U_5)^{3}
	{}+\frac{15}{2}(U_4-U_3)^{3}
	{}+18(U_1-U_2-2U_5)^{3}
	{}-9(U_1+U_5)^{3}
	{}-9(-U_2-U_3-2U_5)^{3}\\
	&{}-18(U_1-U_2+U_3+U_5)^{3}
	{}-2(-2U_2-U_4-3U_5)^{3}
	{}-\frac{3}{2}(3U_1-2U_2+2U_4-3U_5)^{3}\\
	&{}+3(U_2-U_4+3U_5)^{3}
	{}-(-U_2+U_4-3U_5)^{3}
	{}+\frac{9}{2}(-2U_2+U_3-U_4+2U_5)^{3}
	\Big)\bigg)\,.
	\end{align*}
	Note that for any dihedral permutation $g\in D_5=\langle (12345),(12)(35)\rangle\subset \mathfrak{S}_5$
	we have $\sigma_g(w) = \chi(g) w$, where $\chi\colon D_5\to\{\pm 1\}$
	takes value $1$ on rotations and $-1$ on reflections.
	From this we see that \linebreak $\Alt_5 \big({w\otimes v}\big) = \frac{1}{10}\Alt_5\big( w\otimes (\Alt_{D_5}v)\big)$, where $\Alt_{D_5}(v) = \sum_{g\in D_5}\chi(g)\sigma_g(v)$.
	The dihedral group $D_5$ acts on $U_j$ as on the vertices of a regular 
	pentagon, and it is not hard to see 
	that $\Alt_{D_5}U_j^3=\Alt_{D_5}U_iU_jU_k=0$,
	hence the image of $\Alt_{D_5}$ on cubic polynomials is two-dimensional
	and it is spanned by $\Alt_{D_5}U_1U_{j}^2$ for $j=2,3$.
	From this we get that the projection of~\eqref{eq:li5part} 
	onto $V_6\otimes \Sym^3(V)$ is equal to
	\begin{align*} 
	192\Alt_5\Big( w\otimes\Big(-3U_1U_2^2+U_1U_3^2\Big)\Big)\,.
	\end{align*}
	On the other hand, one can easily check that
	\[w+\sigma_{(23)}(w)-\sigma_{(24)}(w)-\sigma_{(243)}(w) = 0\,,\]
	and therefore 
	\begin{align*} 
	0 &\= \Alt_5 \Big( (-w-\sigma_{(23)}(w)+\sigma_{(24)}(w)
	+\sigma_{(243)}(w))\otimes (U_1-U_2)^3 \Big) \\
	&\= \Alt_5 \Big( w \otimes \Big(-(U_1-U_2)^3+(U_1-U_5)^3
	-(-U_1+U_2-U_3+U_5)^3+(-U_1-U_3-U_5)^3\Big) \Big) \\
	&\= 6\Alt_5\Big( w\otimes \big(-3U_1U_2^2+U_1U_3^2\big)\Big)\,.
	\end{align*}
	
	This shows that~\eqref{eq:li5part} is an element of $V_4\otimes \Sym^3(V)$.
	Next, we compute the projections onto $V_4$:
	\begin{align*}
	\pr_{V_4}([2534]\wedge \R_1) &\= 
	\frac{2}{5}u_1\wedge u_5u_2
	+\frac{1}{5}u_2\wedge u_1u_3
	+\frac{2}{5}u_4\wedge u_3u_5\,,\\
	\pr_{V_4}([1524]\wedge \R_2) &\= 
	\frac{2}{5}u_2\wedge u_1u_3
	+\frac{2}{5}u_3\wedge u_2u_4
	+\frac{1}{5}u_4\wedge u_3u_5\,,\\
	\pr_{V_4}([2435]^2[2534] \wedge \R_3) &\= 
	\frac{26}{5}u_1\wedge u_5u_2
	+\frac{13}{5}u_2\wedge u_1u_3
	+\frac{16}{5}u_3\wedge u_2u_4
	+2 u_4\wedge u_3u_5\,.
	\end{align*}
	Then~\eqref{eq:li5part} is equal to
	\begin{align*}
	\frac{1}{5}\Alt_5 \bigg( (u_1\wedge u_2u_5)\otimes\Big(
	&{}-15(U_4-U_5)^3 -\frac{15}{2}(U_3-U_4)^3 - 15(U_1-U_2)^3
	{}+18(U_5-U_1-2U_4)^3 \\
	&{}+18(U_4-U_5-2U_3)^3 + 9(U_3-U_4-2U_2)^3
	{}-13(-2U_2-U_4-3U_5)^3 \\
	&{}-\frac{13}{2}(-2U_1-U_3-3U_4)^3
	{}-8(-2U_5-U_2-3U_3)^3-5(-2U_4-U_1-3U_2)^3     
	\Big) \bigg) \,.
	\end{align*}
	Let us denote the parenthesised polynomial by $P(U_1,\dots,U_5)$.
	Then, combining this identity with~\eqref{eq:s32part} we get
	that the mod-products symbol of the left-hand-side of~\eqref{eq:fivetermsym} 
	is equal to
	\begin{equation} \label{eq:5termfinalstep}
	\Alt_5 \bigg( (u_1\wedge u_2u_5)\otimes\Big(
	\frac{1}{5}P(U_1,U_2,U_3,U_4,U_5)-33(U_1-U_2-U_5)U_1^2
	\Big) \bigg) \,.
	\end{equation}
	The term $(u_1\wedge u_2u_5)$ is skew-symmetric under 
	the subgroup $\mathfrak{S}_4\subset \mathfrak{S}_5$ 
	that permutes $x_1,\dots,x_4$, therefore 
	\[
	\Alt_5 \Big( (u_1\wedge u_2u_5)\otimes v \Big) \= 
	\Alt_5 \Big( (u_1\wedge u_2u_5)\otimes 
	\frac{1}{24}(\Sym_{\mathfrak{S}_4}v) \Big) \,.
	\]
	In view of this we compute
	\begin{align*}
	& \frac{1}{24}\Sym_{\mathfrak{S}_4}
	\Big(\frac{1}{5}P(U_1,U_2,U_3,U_4,U_5)
	-33(U_1-U_2-U_5)U_1^2\Big) \\[1ex]
	& \= 12\sum_{j\Mod{5}} \big(-U_j^3+U_jU_{j+1}(U_j+U_{j+1}+U_{j+2})
	-U_jU_{j+2}(U_j+U_{j+2})\big)\,.
	\end{align*}
	Finally, combining this with
	\[
	\sum_{j\Mod{5}}(u_j\wedge u_{j-1}u_{j+1}) \= 
	\sum_{j\Mod{5}}(u_j\wedge u_{j+1} - u_{j-1}\wedge u_{j}) \= 0\,
	\]
	we get that~\eqref{eq:5termfinalstep} is equal to
	\begin{align*} 
	& 12 \Alt_5 \Big( (u_1\wedge u_2u_5)\otimes
	\sum_{j\Mod{5}} \big( -U_j^3+U_jU_{j+1}(U_j+U_{j+1}+U_{j+2})-U_jU_{j+2}(U_j+U_{j+2}) \big)
	\Big) \\
	& \= 12 \Alt_5 \Big( \sum_{j\Mod{5}}(u_j\wedge u_{j-1}u_{j+1})
	\otimes
	\big(-U_1^3+U_1U_{2}(U_1+U_{2}+U_{3})-U_1U_{3}(U_1+U_{3})\big) \Big)
	=0\,,
	\end{align*}
	concluding the proof of~\eqref{eq:fivetermsym}.
\end{proof}

\begin{Rem}
	If we utilise the results in \cite{Br-rep}, one can potentially obtain a simpler proof of the \( S_{3,2} \) of five-term reduction in \eqref{eq:fivetermsym}.  From \eqref{eq:pi1pi2vanish}, we see that the mod-products symbol of
	\[
	\Alt_{5} \Big(\Li_5\big(15[\R_1(x_1,\ldots,x_5)] 
	- 9[\R_2(x_1,\ldots,x_5)] 
	+ [\R_3(x_1,\ldots,x_5)]\big) \Big)
	\]
	lands in the space of (integrable) tensors of iterated integrals on \( \mathfrak{M}_{0,5} \), since the contributions to each of the irreducibles \( \pi_1 \) and \( \pi_2 \) cancel.
	
	Since the 2-part of the deconcatenation cobracket (or functional cobracket, rather than motivic cobracket) also vanishes (it is a combination of depth 1 polylogarithms), Theorem 56 in \cite{Br-rep} implies that it must be expressible in terms of Nielsen polylogarithms of weight 5, with cross-ratio arguments.  The \( \Alt_{5} \) symmetry, and the fact that \( S_{3,2} \) and \( \Li_5 \) suffice by Theorem \ref{thm:nielsendepth}, means that it must equal
	\[
	\Alt_5 \Big( c_1 S_{3,2}(\CR(x_1,\ldots,x_4)) + c_2 \Li_5(\CR(x_1,\ldots,x_4)) \Big) \, , 
	\]
	for some \( c_1, c_2 \) to be determined.  Moreover, \( \Alt_5 \Li_5(\CR(x_1,\ldots,x_4)) \= 0 \) already, by the \( \Li_5 \) inversion.  To fix the coefficient \( c_1 \), we can compare the coefficients of \( (x_1 - x_2) \wedge (x_2 - x_3) \otimes (x_1 - x_2)^{\otimes 3} \).
\end{Rem}

If we alternate \eqref{eq:fivetermsym} over a sixth point, the Nielsen term vanishes
as it depends on only four points. As a corollary, 
 we obtain the following non-trivial functional 
equation for $\Li_5$ that was previously found by the third 
author as a result of an extensive computer search.
\begin{Cor}[{\cite[Theorem 5.13]{R}}]\label{cor:l5fe}
	For any $x_1,\dots,x_6\in\mathbb{P}^1(\CC)$ we have
	\begin{align*}
	\Alt_{6} \Big( 
	L_5\big(15[\R_1(x_1,\ldots,x_5)]
	-9[\R_2(x_1,\ldots,x_5)]
	+[\R_3(x_1,\ldots,x_5)] \big) \Big) \= 0 \,.
	\end{align*}
	Here we can choose either \( L_5 = \LiCS_5 \), the clean single-valued polylogarithm above, or \( L_5 = \LiZS_5 \), Zagier's single-valued polylogarithm.
\end{Cor}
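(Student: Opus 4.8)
The plan is to deduce the claim directly from Theorem~\ref{thm:s32fiveterm} (equivalently its clean single-valued form, Corollary~\ref{cor:s32fiveterm}) by antisymmetrising over one additional point. Write
\[
	\widehat{g}(x_1,\dots,x_5) \ceq 11\,\SCS_{3,2}(\CR(x_1,x_2,x_3,x_4)) + \LiCS_5\big(15[\R_1] - 9[\R_2] + [\R_3]\big)\,,
\]
so that Corollary~\ref{cor:s32fiveterm} is the \emph{exact} identity $\Alt_5\widehat{g} = 0$, while Theorem~\ref{thm:s32fiveterm} is its mod-products symbol shadow $\Alt_5\widehat{g}\modsh 0$. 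I regard $\widehat g$ as a function of $x_1,\dots,x_6$ that happens not to involve $x_6$, and I compute $\Alt_6\widehat g$.

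First I would use a group-algebra factorisation of the antisymmetriser. Embedding $\mathfrak{S}_5 = \operatorname{Stab}(6)\subset\mathfrak{S}_6$ and choosing coset representatives $\tau_1,\dots,\tau_6$ for $\mathfrak{S}_6/\mathfrak{S}_5$, the antisymmetrisers $A_k = \sum_{\sigma}\operatorname{sgn}(\sigma)\sigma$ satisfy $A_6 = \big(\sum_k \operatorname{sgn}(\tau_k)\tau_k\big)\,A_5$ in $\QQ[\mathfrak{S}_6]$. Since $A_5\cdot\widehat g = \Alt_5\widehat g = 0$, acting by $A_6$ gives $\Alt_6\widehat g = 0$, exactly and with no constant. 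Next I would observe that the Nielsen contribution drops out: the cross-ratio $\CR(x_1,x_2,x_3,x_4)$ involves neither $x_5$ nor $x_6$, so $\SCS_{3,2}(\CR(x_1,x_2,x_3,x_4))$ is invariant under the transposition $(5\,6)$; pairing each $\sigma\in\mathfrak{S}_6$ with $\sigma\,(5\,6)$, whose sign is opposite, forces $\Alt_6\,\SCS_{3,2}(\CR(x_1,x_2,x_3,x_4)) = 0$. Subtracting this from $\Alt_6\widehat g = 0$ leaves exactly
\[
	\Alt_6\,\LiCS_5\big(15[\R_1] - 9[\R_2] + [\R_3]\big) = 0\,,
\]
which is the assertion for $L_5 = \LiCS_5$, and the identical manipulation on symbols yields $\Alt_6\,\Li_5(15[\R_1] - 9[\R_2] + [\R_3])\modsh 0$.

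Finally I would transfer the statement to Zagier's single-valued polylogarithm $\LiZS_5$. Both $\LiCS_5$ and $\LiZS_5$ share the main term $\Re_5\Li_5$ and, by the comparison from \cite{Ch-Du-Ga} recalled above, differ only by products of lower-weight single-valued functions; hence any relation that holds modulo products lifts to an exact identity for each of them, up to an additive constant, and that constant is annihilated by the overall antisymmetry. Applying this to the symbol relation just obtained gives $\Alt_6\,\LiZS_5(15[\R_1] - 9[\R_2] + [\R_3]) = 0$ as well. The combinatorial heart — the vanishing of the Nielsen term because it sees only four of the six points — is immediate, so I expect no substantial difficulty; the only step needing care is this last matching of the two single-valued normalisations together with the check that no nonzero constant survives.
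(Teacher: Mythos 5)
Your proposal is correct and follows essentially the same route as the paper, which simply alternates the identity of Theorem~\ref{thm:s32fiveterm} over a sixth point and notes that the Nielsen term dies because it depends on only four of the six points (your $(5\,6)$-pairing argument makes this explicit). The extra details you supply — the coset factorisation of the antisymmetriser and the transfer from $\LiCS_5$ to $\LiZS_5$ via the comparison in \cite{Ch-Du-Ga} and the vanishing of the constant under antisymmetrisation — are exactly the steps the paper leaves implicit.
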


Since every rational functional equation for \( \Li_2 \), i.e. any relation \( \sum_i n_i \LiZS_2(F_i(x_1,\ldots,x_k)) \= 0 \) with \( F_i \in \QQ(x_1,\ldots,x_k) \),  follows from the five-term relation, we see that \( S_{3,2} \) satisfies dilogarithm functional equations modulo \( \Li_5 \) terms.  

\begin{Cor}[Distribution relations for \( S_{3,2} \)]
	The Nielsen polylogarithm satisfies the distribution relations
	\[
		\frac{1}{n} S_{3,2}(z^n) - \sum_{\lambda^n = 1} S_{3,2}(\lambda z) \= 0 \Mod{$\Li_5$,  products} \, ,
	\]
	with algorithmically determinable \( \Li_5 \) terms.
	
	\begin{proof}
		Wojtkowiak \cite{Wo} gives an algorithm which reduces any  functional equation in a single variable~\( z \), with arguments in \( \CC(z) \), to a combination of five-term relations (for a condensed version of the proof see also \cite{Za3}, Proposition 4).  From this, we can write the \( \Li_2 \) distribution relation as a sum of five-term relations, and obtain the corresponding statement for \( S_{3,2} \).
	\end{proof}
\end{Cor}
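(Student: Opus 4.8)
The plan is to transport the classical distribution relation through the five-term reduction of Theorem~\ref{thm:s32fiveterm}, using that this distribution relation is itself a consequence of the five-term relation.

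First I would record that the \( \Li_2 \) distribution relation is a genuine single-variable functional equation: the formal combination
\[
	D_n(z) \ceq \frac{1}{n}[z^n] - \sum_{\lambda^n = 1}[\lambda z]
\]
satisfies \( \frac{1}{n}\{z^n\}_2 - \sum_{\lambda^n=1}\{\lambda z\}_2 = 0 \) in the pre-Bloch group \( B_2(\CC(z)) \), i.e.\ its dilogarithm class vanishes. By Wojtkowiak's theorem \cite{Wo} (see also \cite{Za3}, Proposition~4), every such functional equation in one variable, with arguments in \( \CC(z) \), is an explicit consequence of the five-term relation. Concretely this produces a finite identity of formal combinations
\[
	D_n(z) \= \sum_i m_i\, V\big(f_i(z), g_i(z)\big) \,,
\]
with \( m_i \in \ZZ \) and \( f_i, g_i \in \CC(z) \), where \( V(x,y) \) denotes the five-term combination in \eqref{eq:fiveterm}.

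Next I would apply \( S_{3,2} \), extended by linearity to formal combinations as above, to both sides. On the left this returns \( \frac{1}{n}S_{3,2}(z^n) - \sum_{\lambda^n=1}S_{3,2}(\lambda z) \), while each summand \( S_{3,2}\big(V(f_i, g_i)\big) \) on the right reduces, on the level of the mod-products symbol, to an explicit combination of \( \Li_5 \) terms by Theorem~\ref{thm:s32fiveterm}; specialising the two free arguments of the five-term relation to the rational functions \( f_i(z), g_i(z) \) is valid at the symbol level because the mod-products symbol is compatible with specialisation of arguments. Summing against the weights \( m_i \) gives
\[
	\frac{1}{n} S_{3,2}(z^n) - \sum_{\lambda^n = 1} S_{3,2}(\lambda z) \= 0 \Mod{$\Li_5$, products}\,,
\]
with the \( \Li_5 \) terms read off by substituting \( f_i(z), g_i(z) \) into the explicit right-hand side of Theorem~\ref{thm:s32fiveterm}. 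As Wojtkowiak's reduction is itself algorithmic, the resulting \( \Li_5 \) combination is algorithmically determinable, as claimed.

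The main obstacle is not the existence argument, which is immediate once Wojtkowiak's reduction is granted together with the linearity of \( S_{3,2} \) on formal combinations, but rather its explicit execution: producing the decomposition of \( D_n(z) \) into five-term instances with controlled rational arguments, and then carrying out the (potentially lengthy) bookkeeping of the \( \Li_5 \) terms that each instance contributes. A secondary technical point is to verify that the arguments \( f_i(z), g_i(z) \) arising from the reduction do not specialise any cross-ratio to \( 0 \), \( 1 \), or \( \infty \) in a way that would invalidate the symbol-level identity of Theorem~\ref{thm:s32fiveterm}; this can be arranged by working generically in \( z \) and treating any such boundary contributions separately.
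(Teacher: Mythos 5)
Your proposal is correct and takes essentially the same route as the paper: invoke Wojtkowiak's algorithm to write the \( \Li_2 \) distribution relation as an explicit combination of five-term relations, then apply the reduction of Theorem~\ref{thm:s32fiveterm} to each five-term instance and sum. The extra care you take about linearity and about degenerate specialisations of the cross-ratio arguments is sensible but does not change the argument, which the paper states in exactly this (more condensed) form.
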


\begin{Cor}\label{cor:s32eval}
	Any \( \Li_2 \) evaluation which is accessible via the five-term relation (i.e. following explicitly from the five-term relation, see \cite{Ki}) can be upgraded to an \( S_{3,2} \) evaluation, with explicit \( \Li_5 \) terms.  In particular, the Nielsen 
	polylogarithm $S_{3,2}(z)$ can be evaluated in terms of $\Li_5$ whenever
	$\Li_2(z)$ can be evaluated in terms of products of logs. 
\end{Cor}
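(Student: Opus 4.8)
The plan is to reduce the claim to the reductions of $S_{3,2}$ on the \emph{generating} functional equations of $\Li_2$ that we have already established, and then to assemble them by linearity. Saying that a $\Li_2$ evaluation is \emph{accessible via the five-term relation} means precisely that, in the free abelian group on the relevant function field, the formal symbol $[z]$ can be written as a finite $\QQ$-linear combination
\[
	[z] \= \sum_{j} c_j\, V(a_j, b_j) + (\text{inversion, reflection and degenerate terms}) \, ,
\]
where $V(x,y) \ceq [x]+[y]-[\tfrac{x}{1-y}]-[\tfrac{y}{1-x}]+[\tfrac{xy}{(1-x)(1-y)}]$ is the five-term combination and $a_j,b_j$ lie in the function field; this is exactly the content of the classification in \cite{Ki}, which supplies the decomposition. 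The arithmetic specific to a given evaluation is entirely contained in producing this decomposition, so I would take it as the input and not attempt to reprove it.

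Next I would extend $S_{3,2}$ to formal linear combinations by linearity, following the convention introduced before Theorem \ref{thm:s32fiveterm}, and apply it to both sides of the decomposition. By Theorem \ref{thm:s32fiveterm}, each term $S_{3,2}(V(a_j,b_j))$ reduces modulo products to the explicit combination of $\Li_5$'s appearing in \eqref{eq:fivetermsym}; by Propositions \ref{prop:s32inv} and \ref{prop:s32twoterm}, any inversion or reflection terms that occur likewise reduce to explicit $\Li_5$'s and products; and the degenerate arguments contribute only multiple zeta values. Summing over $j$ then yields
\[
	S_{3,2}(z) \= (\text{explicit } \Li_5 \text{ terms}) \Mod{products} \, ,
\]
with the $\Li_5$ arguments read off algorithmically from the $a_j,b_j$. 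This already proves the final sentence at symbol level: when the right-hand side of the $\Li_2$ evaluation consists only of products of logs, the sole polylogarithmic content surviving on the $S_{3,2}$ side is carried by $\Li_5$.

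To pass from the symbol level to a genuine analytic evaluation I would work with the clean single-valued functions $\SCS_{3,2}$ and $\LiCS_5$. By the main property of the cleaning procedure from \cite{Ch-Du-Ga}, a vanishing mod-products symbol identity lifts to an analytic identity among the corresponding clean single-valued functions up to a single constant; applying this to the decomposition above turns the symbol reduction into $\SCS_{3,2}(z) = (\text{explicit } \LiCS_5 \text{ terms}) + \mathrm{const}$, with the coefficients inherited from Corollary \ref{cor:s32fiveterm}. The constant is a single-valued multiple zeta value, and it can be pinned down explicitly by specialising $z$ to a convenient point such as $z=0$, where $\SCS_{3,2}(0)=0$. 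The main obstacle is therefore not the structural reduction, which is immediate from linearity, but this final bookkeeping: one must verify that the product-of-logs terms of the original $\Li_2$ evaluation do not obstruct single-valuedness and that the accumulated integration constant can be evaluated. Both are guaranteed by the clean single-valued formalism, so the only genuine work is a routine specialisation to fix the constant.
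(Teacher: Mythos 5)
Your argument is correct and is essentially the one the paper intends: the corollary is stated without a separate proof precisely because it follows, as you say, by writing the accessible evaluation as an explicit combination of five-term and two-term relations, applying $S_{3,2}$ by linearity, and invoking Theorem \ref{thm:s32fiveterm} together with Propositions \ref{prop:s32inv} and \ref{prop:s32twoterm}, with the clean single-valued formalism handling the lift from symbols to analytic identities. Your added remarks on fixing the integration constant by specialisation match how the paper treats its worked examples (the golden-ratio values and the ladders).
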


\subsection{Special values and ladders}

Corollary \ref{cor:s32eval} gives us the previous formulae for $S_{3,2}(1) = \zeta(1,4) = -\zeta(2)\zeta(3) + 2 \zeta(5) $, and for $S_{3,2}(-1)$, $S_{3,2}(\tfrac{1}{2})$ given in \eqref{eqn:s32m1}, and \eqref{eqn:s32half} above.  It also gives the following new identities involving the golden ratio, and ladders involving \( \frac{1}{3} \) or \( \sqrt{2} - 1 \). \medskip

\paragraph{\bf Values involving the golden ratio:} 
	Recall the following evaluation  involving the golden ratio \( \phi = \frac{1}{2} (1 + \sqrt{5}) \) for \( \Li_2 \) (see \cite[Equations 1.20 and 1.21]{Le}, or \cite[Section 1.1]{Za3}):
\begin{alignat*}{2}
    \Li_2(-\phi ) &\= -\frac{3}{5} \zeta(2)-\log ^2(\phi) \,.
\end{alignat*}
We have the following evaluation for the clean single-valued Nielsen polylogarithm \( \SCS_{3,2} \):
    \begin{alignat*}{7}
	\SCS_{3,2}(-\phi )\=\frac{1}{33}\LiCS_5\big(&&{}-8 [\phi ^{-3}]{}-{}&&243 [\phi ^{-1}]{}-{}&&219 [-\phi ]{}+{}&&8 [-\phi ^3] \big) {}+{}&&\zeta (5) \,.
    \end{alignat*}
     For the complex analytic Nielsen polylogarithm \( S_{3,2} \) we have:
    \begin{align*}
	    S_{3,2}(-\phi )\={}&
	    \frac{1}{33}\Li_5\big({}-8 [\phi ^{-3}]{}-{}243 [\phi ^{-1}]{}-{}219 [-\phi ]{}+{}8 [-\phi ^3] \big)
	    -2 \Li_4(-\phi )  \log (\phi )\\ &
	    +\frac{1}{2}\zeta (5)
	    -\frac{325}{22} \zeta (4) \log (\phi)
	     -\zeta(3)\Li_2(-\phi)
	    -\frac{16}{11} \zeta (2) \log ^3(\phi )
	    +\frac{8 }{15}\log ^5(\phi ) \,.
    \end{align*}

Note that the coefficient of \( \zeta(5) \) in the analytic identity is \( \frac{1}{2} \) of the coefficient in the single-valued identity.  The \( \zeta(5) \) appearing in the single-valued identity is really \( \frac{1}{2} \zeta^{\sv}(5) = \frac{1}{2} \LiCS_5(1) \), and this coefficient becomes manifest when passing to the analytic identity. 

\smallskip
There are three related evaluations for $\phi^{-2}$, $\phi^{-1}$ and $-\phi^{-1}$ which we reproduce for the sake of completeness in Appendix \ref{app:reductiongoldenratio}.

\medskip

\paragraph{\bf Ladder with \( \frac{1}{3} \):} Corresponding to the evaluation 
\[
	\Li_2\bigg(\bigg[\frac{1}{9} \bigg]-6 \bigg[\frac{1}{3}\bigg] \bigg) \= -2 \zeta(2) + \log^2(3) \,,
\]
we have a clean single-valued identity for \( \SCS_{3,2} \) which states
\begin{align*}
	\SCS_{3,2}\bigg(\bigg[\frac{1}{9} \bigg]-6 \bigg[\frac{1}{3}\bigg] \bigg) \= \LiCS_5\bigg( & \frac{1}{16}  \bigg[\frac{1}{9}\bigg]+\frac{21}{2} \bigg[\frac{1}{4}\bigg]+36 \bigg[\frac{1}{3}\bigg]
	-100 \bigg[\frac{1}{2}\bigg] \\ {}& -60 \bigg[\frac{2}{3}\bigg]+\frac{69}{2} \bigg[\frac{3}{4}\bigg]-2 \bigg[\frac{8}{9}\bigg] \bigg) + \frac{1855 }{12}\zeta (5) \,.
\end{align*}
We also have the corresponding analytic identity for $S_{3,2}\big(\big[\frac{1}{9} \big]-6 \big[\frac{1}{3}\big] \big) $ which we reproduce in Appendix \ref{app:reductions32ladder}.

\medskip
\paragraph{\bf Lewin's ladder with \( \alpha = \sqrt{2}-1 \):} Corresponding to one of Lewin's ladders (Equation 94(a) in \cite{Lewindilog})
\[
	\Li_2(\alpha^2)-4 \Li_2(\alpha) \= \log
	^2(\alpha) - \frac{3}{2} \zeta(2) \,,
\]
we have the clean single-valued identity
\begin{alignat*}{5}
	\SCS_{3,2}(\alpha ^2)-4 \SCS_{3,2}(\alpha ) \= 
	 \frac{1}{117} \LiCS_5\bigg(
	&&14 \bigg[{-}\frac{\beta ^5}{\alpha}\bigg]
	&&{}+{}28 \big[\alpha  \beta ^4\big]
	&&{}+{}62 \big[\alpha  \beta ^3\big]
	&&{}-{}252 \big[-\beta ^3\big] &
	\\&&
	{}+{}44 \bigg[\frac{\beta ^3}{\alpha}\bigg]
	&&{}-{}574 \big[\alpha  \beta ^2\big]
	&&{}-{}252 \bigg[\frac{\beta ^2}{\alpha }\bigg]
	&&{}-{}22 \bigg[{-}\frac{ \beta ^2}{\alpha}\bigg] &
	\\&&
	{}+354 \big[\alpha  \beta \big]
	&&{}-{}252 \big[-\alpha  \beta \big]
	&&{}-{}2488 \big[\beta \big]
	&&{}-{}2896 \big[-\beta \big] &
	\\&&
	{}+70 \bigg[\frac{\beta}{\alpha }\bigg]
	&&{}+{}28 \bigg[{-}\frac{ \beta}{\alpha ^{2}} \bigg]
	&&{}+{}1260 \big[\alpha \big]
	&&{}+{}1824 \big[-\alpha \big] &\bigg)
	-\frac{659}{117} \zeta (5)
	 \,,
\end{alignat*}
where we write \( \beta = \sqrt{2} \) for convenience. From this, an analytic identity can again be derived.

\def \om {\omega}

\subsection{\texorpdfstring{Evaluation of \( S_{3,2} ([\om^2] + 2[\om]) \) for \( \om \) a root of the polynomial \( u^3 + u^2 -1 \)}{Evaluation of S\textunderscore{}{3,2}([w\textasciicircum{}2] + 2[w]) for w a root of the polynomial u\textasciicircum{}3 + u\textasciicircum{}2 - 1}}\label{s32ladderomega}
By combining different functional equations of \( S_{3,2} \) we can give another ladder evaluation. Let \( \om \) be a root of the polynomial \( u^3 + u^2 -1 \). Then we use the depth reduction of \( S_{3,2} \) applied to the following  algebraic \( \Li_2 \) functional equation \( \big[t(1-t) \big]  + \big[-\frac {t}{(1-t)^2} \big] 
+ \big[-\frac {1-t}{t^2} \big] \) from the three roots of \( x^2(1-x)^{-3} = \frac{t^2(1-t)^2}{(1-t+t^2)^3} \) (case \( a=2, b=-3 \) in Proposition \ref{prop:s32alg} above) and specialise to \( t=-\om \). The three arguments turn actually out to be equal to \( -\om^{-1}, \om^5 \) and  \( -\om^{-4} \), respectively. Now using  further algebraic relations for \( \om \) like \( 1+\om^4 = \om^{-1} \) and  \( 1-\om^5 = \om \) together with inversion and reflection relations as well as the duplication relation, we can rewrite the given combination as \( -\frac{1}{2} \SCS_{3,2}([\omega^2] + 2[\omega]) \) modulo explicit \( \LiCS_5 \) terms. Moreover, if we take the {\em real} embedding of \( \om \) the same ladder holds even for \( S_{3,2} \) modulo \( \Li_5 \).

\section{Identities in weight 6}
In this section we first show that the depth~3 integral  \( S_{3,3} \) can be reduced to \( S_{4,2}\) and \( S_{5,1}=\Li_6 \) (Proposition \ref{prop:s33reduce}).
Moreover, in analogy to the situation for \( S_{3,2} \) and functional equations of \( \Li_2 \) above, we expect that  \( S_{4,2} \),  evaluated on any functional equation of \( \Li_3 \), can itself be depth reduced  to  \( \Li_6 \), at least modulo products.
As evidence we show the corresponding statement for the three term relation (Proposition \ref{prop:s42three}) and for an algebraic family of functional equations (Proposition \ref{prop:s42alg}). As a consequence, we evaluate  \( S_{3,3} \) at certain roots of unity (Corollary \ref{s33rootsof1}), and we match the coproduct for \( S_{3,3}(-1) \) as well as for \( S_{4,2} \) evaluated at \( 1 \), \( \frac{1}{2}\)  and \( -\phi^{-2} \), where \( \phi \)  denotes again the golden ratio.

\smallskip
\paragraph{\em Preconsideration:} The 2-part of the  motivic coboundary of \( S_{4,2}(z) \) and \( S_{3,3}(z) \) is computed to be
\begin{align*}
	\delta S_{4,2}(z) &\= -\{z\}_3 \wedge \{1\}_3 \,, \\
	\delta S_{3,3}(z) &\=  - \{z\}_3 \wedge \{1\}_3  + \{1-z\}_3 \wedge \{1\}_3 \,.
\end{align*}
This suggests that \( S_{4,2}(z) \) should behave like \( \Li_3 \) modulo \( \Li_6 \), and gives a candidate for reducing \( S_{3,3} \) to \( S_{4,2} \) by matching their cobrackets.

\subsection{\texorpdfstring{Depth reduction of \( S_{3,3} \)}{Depth reduction of S\textunderscore{}\{3,3\}}}

We know that \( S_{3,3}(z) \) can be reduced to \( S_{4,2} \) and \( \Li_6 \), but from the motivic cobracket evaluated above we expect the  combination
\[
S_{3,3}(z) + (S_{4,2}(1-z) - S_{4,2}(z))  
\]
 in particular to reduce modulo products to \( \Li_6 \)'s. Indeed, we find
 
\begin{Prop}\label{prop:s33reduce}
	The following identity holds for all \( z \in \CC \sm (-\infty, 0] \cup [1, \infty) \), and reduces the Nielsen polylogarithm \( S_{3,3} \) to depth \( \leq 2 \)
	\begin{align*}
	S_{3,3}(z) \={}& 
	 S_{4,2}(z)
	-S_{4,2}(1-z)
	 +\Li_6(1-z)
	 -\Li_6(z)
	 -\Li_6\Big(\frac{z}{z-1}\Big) 
	 -S_{3,2}(z) \log (1-z)
	 \\&
	  {}+\Li_5(z) \log (1-z)
	  -\Li_5(1-z) \log(z)
	   -\frac{1}{2!}\Li_4(z) \log ^2(1-z) 
	   \\& 
	    {}-\frac{1}{6!} \log ^6(1-z)
	     +\frac{1}{5!} \log (z) \log ^5(1-z)
	   -\frac{1}{2! \, 4!}\log ^2(z) \log ^4(1-z) 
	    \\& 
	    {}-\frac{1}{4!} \zeta (2) \log ^4(1-z)
	    +\frac{1}{3!}\zeta (2) \log (z) \log ^3(1-z)
	    +\frac{1}{2!} \zeta (3) \log (z) \log ^2(1-z)
	     \\&
	     {}+\zeta (4) \log (z) \log(1-z)
	    -\frac{3}{4 \cdot 2!} \zeta (4) \log ^2(1-z)
	      +\zeta (5) \log (z)
	     \\&
	      {}+ \Big(\zeta (5) -\zeta (2) \zeta (3) \Big) \log (1-z)
	      - \Big( \frac{1}{4}\zeta (6) + \frac{1}{2}\zeta (3)^2 \Big) \,.
	\end{align*}
	
	\begin{proof}
		Differentiate, and use weight 5 identities to see the result is constant.  By taking \( z \to 0 \)  we can fix the constant as \( \ -S_{4,2}(1) + \Li_6(1) = \frac{1}{4} \zeta(6) + \frac{1}{2} \zeta(3)^2 \). \medskip
		
		Also, one can check the polynomial invariant from Section \ref{sec:spanningset}.  Whenever an identity among  these polynomial invariants holds,  the corresponding mod-products symbol identity is also true, and the result is necessarily derivable from inversion (Proposition \ref{prop:nielsenreflection}) and reflection (Proposition \ref{prop:nielseninverse}).  We have
		\begin{align*}
			& S_{3,3}(z) - \Big( S_{4,2}(z)-S_{4,2}(1-z) +\Li_6(1-z)-\Li_6(z)-\Li_6\Big(\frac{z}{z-1}\Big) \Big) \\
			 \mapsto\:{} & 6 X^2 Y^2 - \big(4 X^3 Y + 4 X Y^3 - Y^4 -X^4 - (X-Y)^4 \big) = 0 \, . \qedhere
		\end{align*}
	\end{proof}
\end{Prop}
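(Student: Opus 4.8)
The plan is to prove the identity in the standard two-step fashion: first show that both sides have the same derivative, reducing the claim to an identity in weight~$5$ that follows from results already established, and then fix the single constant of integration by a limit at \( z\to 0 \). Since both sides are finite combinations of Nielsen polylogarithms and products of logarithms with multiple-zeta-value coefficients, this suffices.

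For the derivative step I would differentiate termwise, using \( \frac{d}{dz}S_{n,p}(z)=\frac{1}{z}S_{n-1,p}(z) \) and \( \frac{d}{dz}\Li_n(z)=\frac{1}{z}\Li_{n-1}(z) \), the chain rule for the arguments \( 1-z \) and \( \frac{z}{z-1} \) (so that \( \frac{d}{dz}\Li_6(\frac{z}{z-1})=\frac{1}{z(1-z)}\Li_5(\frac{z}{z-1}) \) with \( \frac{1}{z(1-z)}=\frac{1}{z}+\frac{1}{1-z} \)), and \( \frac{d}{dz}\log(1-z)=-\frac{1}{1-z} \). Separating the \( \frac{1}{z} \)-part from the \( \frac{1}{1-z} \)-part turns the weight~$6$ claim into a pair of weight~$5$ identities among \( S_{3,2}(z) \), \( S_{3,2}(1-z) \), \( S_{2,3}(z) \), \( \Li_5 \), \( \Li_4 \) and products, which follow from the inversion and reflection relations for \( S_{3,2} \) in Propositions~\ref{prop:s32inv} and~\ref{prop:s32twoterm}, together with the reflection of Proposition~\ref{prop:nielsenreflection} that rewrites \( S_{2,3}(z) \) through \( S_{3,2}(1-z) \) modulo products; alternatively one differentiates once more and descends to weight~$4$, where every Nielsen polylogarithm reduces to \( \Li_4 \) and products by Proposition~\ref{prop:s22reduction}.

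The precise shape of the right-hand side is not guessed but dictated by the cobracket: from \( \delta S_{3,3}(z)=-\{z\}_3\wedge\{1\}_3+\{1-z\}_3\wedge\{1\}_3 \) and \( \delta S_{4,2}(z)=-\{z\}_3\wedge\{1\}_3 \) one computes \( \delta\big(S_{3,3}(z)-S_{4,2}(z)+S_{4,2}(1-z)\big)=0 \), so this combination lies in the kernel of the depth-lowering cobracket and is therefore expressible through \( \Li_6 \) and products. Equivalently, the whole mod-products symbol identity can be checked at once via the polynomial invariants of Section~\ref{sec:spanningset}: under \( z\leftrightarrow X \), \( 1-z\leftrightarrow Y \) one has \( S_{3,3}(z)\mapsto 6X^2Y^2 \), \( S_{4,2}(z)\mapsto 4X^3Y \), \( S_{4,2}(1-z)\mapsto -4XY^3 \), \( \Li_6(z)\mapsto X^4 \), \( \Li_6(1-z)\mapsto -Y^4 \) and \( \Li_6(\frac{z}{z-1})\mapsto -(X-Y)^4 \), and the asserted combination of these monomials vanishes identically. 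By Theorem~\ref{thm:nielsendepth} this confirms the \( \Symbsh \)-level reduction to depth \( \le 2 \) and guarantees that an analytic identity of exactly the stated form exists.

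Finally, for the constant I would let \( z\to 0 \). Here \( S_{3,3}(0)=S_{4,2}(0)=\Li_6(0)=0 \), every term carrying a factor \( \log(1-z) \) vanishes, and the only a priori singular contributions, \( \zeta(5)\log(z) \) and \( -\Li_5(1-z)\log(z) \), cancel since \( \Li_5(1-z)\to\zeta(5) \). The surviving value of the non-constant terms is \( -S_{4,2}(1)+\Li_6(1) \), so requiring the right-hand side to vanish at \( z=0 \) fixes the additive constant; with \( \Li_6(1)=\zeta(6) \) and \( S_{4,2}(1)=\zeta(1,5)=\frac{3}{4}\zeta(6)-\frac{1}{2}\zeta(3)^2 \) this gives the stated \( -\big(\frac{1}{4}\zeta(6)+\frac{1}{2}\zeta(3)^2\big) \). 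I expect the main difficulty to be organisational rather than conceptual: the cobracket and symbol calculations identify the correct combination and show it is \( \Li_6 \) plus products almost for free, whereas lifting this to the exact analytic identity requires careful tracking of every weight-lowering product term through the differentiation and correct regularisation of the multiple-zeta-value constants---in particular the value of \( \zeta(1,5) \) and the cancellation of the \( \log(z) \)-singularities at the origin.
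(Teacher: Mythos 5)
Your proposal is correct and follows essentially the same route as the paper: differentiate to reduce to known weight-5 identities, fix the constant by letting \( z\to 0 \) (using \( S_{4,2}(1)=\zeta(1,5)=\tfrac{3}{4}\zeta(6)-\tfrac{1}{2}\zeta(3)^2 \)), and cross-check via the polynomial invariant of Section~\ref{sec:spanningset}. Your invariant computation is in fact the cleaner one: with \( \Li_6\big(\tfrac{z}{z-1}\big)\mapsto -(X-Y)^4 \) the combination \( 6X^2Y^2-4X^3Y-4XY^3+X^4+Y^4-(X-Y)^4 \) vanishes identically, whereas the corresponding term in the paper's displayed check carries the opposite sign.
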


By specialising the proposition to \( z = \tfrac{1}{2} \), where the \( S_{4,2} \) terms cancel, and to \( z =-1 \), respectively, we get the following.

\begin{Cor}\label{s33spec}
\begin{enumerate}
\item One has the reduction

\begin{align*}
	S_{3,3}\Big(\frac{1}{2}\Big) \= &
	\Li_5\Big(\frac{1}{2}\Big) \log (2)
	+\frac{1}{2} \Li_4\Big(\frac{1}{2}\Big) \log^2(2)
	+\frac{23}{32} \zeta(6)
	-\frac{1}{2}\zeta (3)^2
	-\frac{63}{32} \zeta (5) \log (2) 
	\\& 
	+\frac{1}{2} \zeta(2) \zeta(3) \log (2) 
	+\frac{1}{2!} \zeta(4) \log ^2(2)
	-\frac{1}{4!} \zeta(2) \log ^4(2)
	+\frac{8}{6!} \log^6(2) \, .
\end{align*}
\item
We can reduce \( S_{3,3}(-1) \) to \( S_{4,2}(-1) \)  and \( S_{4,2}(\tfrac{1}{2}) \) modulo products as
\begin{equation}
\label{eqn:s33m1}
\begin{aligned} 
	S_{3,3}(-1) \= & 
	S_{4,2}(-1)
	-S_{4,2}\Big(\frac{1}{2}\Big)
	 + 2 \Li_6\Big(\frac{1}{2}\Big)
	 +\Li_5\Big(\frac{1}{2}\Big) \log(2)
	  -\frac{41}{32} \zeta(6)
	 -\frac{1}{2}\zeta (3)^2
	 \\& 
	{}-\frac{1}{2} \Big( \frac{1}{16} \zeta (5) - \zeta(2)\zeta (3) \Big) \log (2)
	 +\frac{1}{8 \cdot 2!} \zeta(4) \log ^2(2) 
	 \\&
	 {}-\frac{1}{3!} \zeta (3) \log^3(2) 
	  +\frac{1}{4!}\zeta(2) \log ^4(2)
	  -\frac{6}{2 \cdot 6!} \log ^6(2)
	  \, .
\end{aligned}
\end{equation}

\end{enumerate}
\end{Cor}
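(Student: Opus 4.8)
The plan is to obtain both evaluations by specialising the exact analytic identity of Proposition \ref{prop:s33reduce}, which has already been established, and then simplifying the resulting combination of weight-$6$ constants. No free constant of integration enters, since both parts are honest specialisations of an identity whose constant was already fixed.

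For part (i) I would set $z = \tfrac12$. Under the involution $z \mapsto 1-z$ the arguments $z$ and $1-z$ coincide, so the depth-$2$ pair $S_{4,2}(z) - S_{4,2}(1-z)$ cancels, and likewise $\Li_6(1-z) - \Li_6(z)$ cancels; this is the cancellation flagged in the text. What survives is $-\Li_6\big(\tfrac{z}{z-1}\big) = -\Li_6(-1)$ together with $-S_{3,2}(z)\log(1-z)$, the pair $\Li_5(z)\log(1-z) - \Li_5(1-z)\log(z)$, and $-\tfrac{1}{2!}\Li_4(z)\log^2(1-z)$. At $z = \tfrac12$ the two $\Li_5$ terms cancel against each other, while feeding the known evaluation \eqref{eqn:s32half} of $S_{3,2}(\tfrac12)$ into $-S_{3,2}(\tfrac12)\log(1-z) = S_{3,2}(\tfrac12)\log 2$ produces exactly the $\Li_5(\tfrac12)\log 2$ and $\Li_4(\tfrac12)\log^2 2$ contributions, the latter combining with $-\tfrac12\Li_4(\tfrac12)\log^2 2$ to give the stated coefficient $\tfrac12$. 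Substituting $\Li_n(-1) = (2^{1-n}-1)\zeta(n)$ and collecting the remaining $\zeta$- and $\log 2$-monomials then yields the asserted formula; this part is essentially mechanical.

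For part (ii) I would set $z = -1$, so that $1 - z = 2$ and $\tfrac{z}{z-1} = \tfrac12$. The terms evaluated at $-1$ and at $\tfrac12$ are immediately of the desired shape: $S_{4,2}(-1)$, $\Li_6(-1)$, $\Li_6(\tfrac12)$, the value $S_{3,2}(-1)$ supplied by \eqref{eqn:s32m1}, and the elementary $\Li_n(-1)$. The genuinely new work lies in the terms carrying the argument $2$, namely $S_{4,2}(2)$, $\Li_6(2)$ and $\Li_5(2)$. I would convert each to the argument $\tfrac12$: the Nielsen term by applying the inversion relation of Proposition \ref{prop:nielseninverse} with $z = \tfrac12$, which writes $S_{4,2}(2)$ as a combination of $S_{4,2}(\tfrac12)$, $\Li_6(\tfrac12)$, a power of $\log(-2)$ and explicit $\zeta$-constants; and $\Li_6(2), \Li_5(2)$ by the classical polylogarithm inversion. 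These conversions, together with the factors $\log(z) = \log(-1)$ multiplying $\Li_5(1-z)$ and several log-monomials in the proposition, introduce imaginary contributions.

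The main obstacle is precisely the bookkeeping of these imaginary terms in part (ii): since $2 \in [1,\infty)$ lies on the branch cut, relating values at $2$ to values at $\tfrac12$ crosses the cut and produces $\log(-1) = i\pi$ together with its higher powers, yet the target identity \eqref{eqn:s33m1} is manifestly real. I therefore expect the decisive check to be verifying that all purely imaginary contributions cancel, after which the surviving real part collects into the stated combination of $S_{4,2}(-1)$, $S_{4,2}(\tfrac12)$, $\Li_6(\tfrac12)$, $\Li_5(\tfrac12)\log 2$, and weight-$6$ products. A high-precision numerical cross-check against a fixed basis of weight-$6$ constants would give independent confirmation that the branch bookkeeping has been carried out consistently.
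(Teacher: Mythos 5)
Your proposal matches the paper's proof: the paper obtains both parts exactly by specialising Proposition \ref{prop:s33reduce} to $z=\tfrac12$ (where the $S_{4,2}$ and $\Li_6$ pairs cancel) and to $z=-1$, respectively. You are in fact more explicit than the paper about the branch-cut bookkeeping needed at $z=-1$ (converting arguments at $2$ to $\tfrac12$ and checking that the $i\pi$ contributions cancel), but this is the same argument.
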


Note that an evaluation of \( S_{3,3}(\tfrac{1}{2}) \)  is already known, but the general result in \cite[Theorem 4]{Ko2}  would only express it in terms of \( S_{2,4}(-1) \) (equivalently of \( S_{4,2}(\tfrac{1}{2}) \), by reflection and inversion) and \( S_{3,3}(-1) \).  The above reduction corresponds to the fact that \( \delta S_{3,3}(\tfrac{1}{2}) = 0 \).

We also stress that this reduction still contains weight 6 Nielsen polylogs. However, we expect that both \( S_{4,2}(-1) \) and \( S_{4,2}(\tfrac{1}{2}) \) reduce further, since  their motivic 2-coboundaries vanish.  From the 3-term and duplication relation for \( \Li_3 \), we obtain that both \( \{\tfrac{1}{2}\}_3 \) and \( \{-1\}_3 \) are rational multiples of \( \{1\}_3 \), so each coboundary reduces to \( 0 \) via the antisymmetry of the wedge product \( \{1\}_3 \wedge \{1\}_3 = 0 \).  These reductions would imply also that \( S_{3,3}(-1) \) reduces to depth~1, as opposed to depth~2 above.  We return to these questions  in Section \ref{sec:s42m1reduction} below.

\subsection{\texorpdfstring{Functional equations for \( S_{4,2} \)}{Functional equations for S\textunderscore\{4,2\}}}\label{sec:s42alg}
As mentioned above, in analogy with the case of $S_{3,2}$ we expect that \( S_{4,2} \) of any  \( \Li_3 \) functional equation can be reduced to \( \Li_6 \) terms.  As evidence for this, we show this for the three term relation and the algebraic family of functional equations from Section \ref{sec:algfe}.
 \medskip
 
Corresponding to the three-term relation for \( \Li_3 \), namely
\begin{equation}
	\Li_3(1-z)+\Li_3(z)+\Li_3\Big(\frac{z}{z-1}\Big) = \zeta(3) \Mod{products} \, , \label{eq:li3threeterm}
\end{equation}
we have the following functional equation for \( S_{4,2} \).

\begin{Prop}[Three-term relation]\label{prop:s42three}
	For all \( z \in \CC \sm (-\infty, 0] \cup [1, \infty) \), the following 3-term identity for \( S_{4,2} \) holds
	\begin{align*}
	&S_{4,2}(1-z)+S_{4,2}(z)+S_{4,2}\Big(\frac{z}{z-1}\Big) \= \\[1ex] 
		& 2 \Li_6(1-z)
		+2 \Li_6(z)
		+2\Li_6\Big(\frac{z}{z-1}\Big)
		 - \Big(\Li_5(z) 
		-\Li_5\Big(\frac{z}{z-1}\Big) \Big) \log (1-z)
		-\Li_5(1-z) \log (z)
		\\& 
		{}-\frac{3}{6!} \log ^6(1-z)
		+\frac{2}{5!} \log (z) \log ^5(1-z)
		-\frac{1}{2! \, 4!} \log ^2(z) \log ^4(1-z)
		-\frac{2}{4!} \zeta (2) \log ^4(1-z)
		\\&
		{}+\frac{1}{3!} \zeta (2) \log(z) \log ^3(1-z)
		-\frac{1}{3!} \zeta (3) \log ^3(1-z)
		+\frac{1}{2!} \zeta (3) \log (z) \log^2(1-z)
		-\frac{7}{4 \cdot 2!} \zeta (4) \log ^2(1-z)
		\\&
		{}+\zeta (4) \log (z) \log(1-z)
		+\zeta (5) \log (z)
		-\zeta (2) \zeta (3) \log (1-z)
		-\Big( \frac{1}{2}\zeta (3)^2+\frac{5}{4} \zeta (6) \Big) \,.
	\end{align*}
	
	\begin{proof}
		Differentiate, and take \( z \to 0 \) to fix the constant as \( S_{4,2}(1) - 2 \Li_6(1) = -\frac{1}{2} \zeta(3)^2 - \frac{5}{4} \zeta(6) \). \medskip
		
		Alternatively, we can also verify that this follows from reflection and inversion, by checking the polynomial invariant from Section \ref{sec:spanningset}:
		\begin{alignat*}{2}
			&& S_{4,2}(1-z)+S_{4,2}(z)+S_{4,2}\Big(\frac{z}{z-1}\Big) & - 2 \Big( \Li_6(1-z)+ \Li_6(z)+ \Li_6\Big(\frac{z}{z-1}\Big) \Big) \\
			\mapsto{}\: && -4 X Y^3 \:+\: 4 X^3 Y \:+\: 4 Y (X-Y)^3 & - 2\big( -Y^4 + X^4 -(X-Y)^4\big) = 0 \,.
			\tag*{\hspace{-1em}\qedhere} %ams approved hack!
			 \end{alignat*}
	\end{proof}
\end{Prop}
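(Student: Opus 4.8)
The plan is to prove the identity in two stages: first establish that it holds modulo products and constants by a purely symbolic (polynomial) computation, and then upgrade it to the exact analytic identity by differentiation, fixing the integration constant by letting $z \to 0$.

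For the first stage I would use the correspondence of Section~\ref{sec:spanningset}, which reduces any mod-products relation among weight~$6$ Nielsen polylogarithms at anharmonic arguments to a relation among two-variable homogeneous polynomials of degree $N - 2 = 4$. Concretely, by Lemma~\ref{lem:snpmodsymb} together with the identifications $z \leftrightarrow X$, $1-z \leftrightarrow Y$, one has
\begin{align*}
S_{4,2}(z) &\mapsto 4X^3Y\,, & S_{4,2}(1-z) &\mapsto -4XY^3\,, & S_{4,2}\Big(\tfrac{z}{z-1}\Big) &\mapsto 4Y(X-Y)^3\,,
\end{align*}
while $\Li_6 = S_{5,1}$ gives $\Li_6(z)\mapsto X^4$, $\Li_6(1-z)\mapsto -Y^4$ and $\Li_6\big(\tfrac{z}{z-1}\big)\mapsto -(X-Y)^4$. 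The whole assertion then amounts to the single polynomial identity
\[
-4XY^3 + 4X^3Y + 4Y(X-Y)^3 \= 2\big(X^4 - Y^4 - (X-Y)^4\big)\,,
\]
which is verified by a one-line expansion. By Theorem~\ref{thm:nielsendepth} and the discussion following it, the vanishing of this polynomial invariant shows that the mod-products symbols of the two sides agree, so the proposed identity holds modulo products and, moreover, is forced to be a formal consequence of the reflection and inversion relations of Propositions~\ref{prop:nielsenreflection} and~\ref{prop:nielseninverse}.

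For the second stage I would differentiate both sides, using $z\frac{d}{dz}S_{n,p}(z) = S_{n-1,p}(z)$ together with the chain rule for the arguments $1-z$ and $\tfrac{z}{z-1}$. This turns each weight-$6$ term into the corresponding weight-$5$ function ($S_{3,2}$, respectively $\Li_5$) times a rational factor, so the derivative of the claimed identity becomes a weight-$5$ relation among $S_{3,2}$, $\Li_5$ and products of logarithms. One checks this weight-$5$ identity by the same polynomial/symbol method one weight down, where the relevant invariants are degree-$3$ homogeneous polynomials. Since both sides of the original identity vanish as $z \to 0$ apart from the constant, comparing the values there fixes the constant of integration as $S_{4,2}(1) - 2\Li_6(1) = -\tfrac12\zeta(3)^2 - \tfrac54\zeta(6)$, completing the proof.

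The symbolic first stage is essentially automatic; the genuine work — and the main obstacle — lies in the analytic completion, namely pinning down the full collection of logarithmic product terms and the final constant. The polynomial calculation certifies only that \emph{some} completion by products exists, so one must still carry out the differentiation carefully (in particular getting the $\mathfrak{S}_3$-action signs for the argument $\tfrac{z}{z-1}$ correct) and verify the resulting weight-$5$ identity term by term before integrating back.
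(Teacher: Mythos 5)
Your proposal is correct and follows essentially the same route as the paper, which likewise proves the identity by differentiation plus fixing the constant at $z\to 0$, and separately notes the polynomial-invariant check $-4XY^3+4X^3Y+4Y(X-Y)^3-2\big(X^4-Y^4-(X-Y)^4\big)=0$ as certifying the mod-products statement; you have merely combined the two alternatives into one two-stage argument. All of your polynomial assignments and the final constant $S_{4,2}(1)-2\Li_6(1)$ agree with the paper's.
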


We note the following reductions  of \( S_{3,3} \) at roots of unity (the latter of which is likely to be known, and the former of which follows from the MZV Data Mine \cite{mzvDM} as the Nielsen polylogarithms at $-1$ are alternating MZV's).

\medskip
\begin{Cor}\label{s33rootsof1}
We have the following specialisations.\medskip
\begin{enumerate}
\item
		\abovedisplayskip=0pt\abovedisplayshortskip=0pt~\vspace*{-1.3\baselineskip}
\begin{align} \label{eqn:s33m1simpler}
	\hspace{-12.5em } S_{3,3}(-1) \= \frac{3}{2} S_{4,2}(-1) + \frac{5}{16} \zeta(6) - \frac{1}{4} \zeta(3)^2\,.
\end{align}
\medskip
\item
		\abovedisplayskip=0pt\abovedisplayshortskip=0pt~\vspace*{-1.3\baselineskip}
\begin{align*}
	S_{3,3}( e^{2\pi i / 6} ) \= &
	3 \Li_6(e^{2\pi i / 6})
	-\frac{1}{2}\zeta(3)^2 
	-\frac{1829}{1944}\zeta (6)
	+\frac{1}{3} i \pi \Big( S_{3,2}(e^{2\pi i / 6})-2 \zeta (5) \Big)
	\\ & 
	+ \frac{1}{3} \zeta (2) \Li_4(e^{2\pi i / 6})
	+\frac{1}{324} (2 i \pi) ^3  \zeta (3) 
	\,.
\end{align*}
\end{enumerate}
\end{Cor}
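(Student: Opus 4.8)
For part~(i), i.e.\ the reduction \eqref{eqn:s33m1simpler}, the plan is to feed one extra relation into the expression \eqref{eqn:s33m1} of Corollary~\ref{s33spec}, which already writes \( S_{3,3}(-1) \) in terms of \( S_{4,2}(-1) \) and the auxiliary value \( S_{4,2}(\tfrac12) \). Specialising the three-term relation of Proposition~\ref{prop:s42three} at \( z = \tfrac12 \), where the arguments \( 1-z,\, z,\, \tfrac{z}{z-1} \) become \( \tfrac12,\, \tfrac12,\, -1 \), gives \( 2S_{4,2}(\tfrac12) + S_{4,2}(-1) \) as an explicit combination of \( \Li_6(\tfrac12),\, \Li_6(-1),\, \Li_5(\tfrac12),\, \Li_5(-1) \) and products of \( \zeta \)-values with powers of \( \log 2 \). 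I would solve this for \( S_{4,2}(\tfrac12) \) and substitute. Since \( S_{4,2}(\tfrac12) \) then carries exactly \( 2\Li_6(\tfrac12) \) and \( \Li_5(\tfrac12)\log 2 \) with the coefficients appearing in \eqref{eqn:s33m1}, the terms \( +2\Li_6(\tfrac12) \) and \( +\Li_5(\tfrac12)\log 2 \) cancel against \( -S_{4,2}(\tfrac12) \); using \( \Li_6(-1) = -\tfrac{31}{32}\zeta(6) \) and \( \Li_5(-1) = -\tfrac{15}{16}\zeta(5) \), one checks that all residual \( \log 2 \) and \( \zeta(5)\log 2 \) contributions also cancel, leaving \( \tfrac32 S_{4,2}(-1) + \tfrac{5}{16}\zeta(6) - \tfrac14\zeta(3)^2 \). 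That the answer must be free of \( \Li_6(\tfrac12) \) and \( \log 2 \) is guaranteed in advance by the vanishing of \( \delta\big(S_{3,3}(-1) - \tfrac32 S_{4,2}(-1)\big) \), which follows from \( \delta S_{4,2}(z) = -\{z\}_3\wedge\{1\}_3 \) together with \( \{-1\}_3 = -\tfrac34\{1\}_3 \). (Alternatively, since \( S_{n,p}(-1) \) are alternating MZVs, part~(i) follows from the known weight-\(6\) MZV relations.)

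For part~(ii), write \( \mu \ceq e^{2\pi i/6} \), so that \( \log\mu = \tfrac{i\pi}{3} \), and record the coincidences \( 1-\mu = \tfrac{\mu}{\mu-1} = \mu^{-1} \) and hence \( \log(1-\mu) = -\tfrac{i\pi}{3} \). The plan is to specialise the depth-reduction identity of Proposition~\ref{prop:s33reduce} at \( z = \mu \). Because \( 1-\mu \) and \( \tfrac{\mu}{\mu-1} \) both equal \( \mu^{-1} \), the two terms \( \pm\Li_6(\mu^{-1}) \) cancel outright, leaving \( -\Li_6(\mu) \) from the classical sextilogarithms, while the depth-\(3\) input collapses to \( S_{4,2}(\mu) - S_{4,2}(\mu^{-1}) \). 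The term \( -S_{3,2}(z)\log(1-z) \) becomes \( \tfrac{i\pi}{3}S_{3,2}(\mu) \), and \( -\tfrac{1}{2!}\Li_4(z)\log^2(1-z) \) becomes \( \tfrac13\zeta(2)\Li_4(\mu) \) because \( -\tfrac12\big(-\tfrac{i\pi}{3}\big)^2 = \tfrac{\pi^2}{18} = \tfrac13\zeta(2) \); these two supply the irreducible \( S_{3,2}(\mu) \) and \( \Li_4(\mu) \) contributions present in the statement.

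To finish part~(ii) I would apply the inversion relation of Proposition~\ref{prop:nielseninverse} to \( S_{4,2}(\mu^{-1}) \), which for \( S_{4,2} \) has the shape \( S_{4,2}(\mu^{-1}) = S_{4,2}(\mu) - 4\Li_6(\mu) - \log(-\mu^{-1})\Li_5(\mu) + (\text{explicit } \pi\text{-power and } \zeta \text{ products}) \). Thus the depth-\(2\) pieces cancel in \( S_{4,2}(\mu) - S_{4,2}(\mu^{-1}) \) and leave \( +4\Li_6(\mu) \), which together with the \( -\Li_6(\mu) \) above produces the stated \( 3\Li_6(\mu) \). It then remains to reduce every surviving classical polylogarithm at \( \mu \) and \( \mu^{-1} \): the order-\(5\) combinations \( \Li_5(\mu)\log(1-\mu) - \Li_5(\mu^{-1})\log\mu \) and \( -\log(-\mu^{-1})\Li_5(\mu) \) reduce via the sixth-root distribution relation, under which \( \Re\Li_5(\mu) = \tfrac{25}{54}\zeta(5) \) while \( \Im\Li_5(\mu) \) is a rational multiple of \( \pi^5 \). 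Combined with the \( \pi \)-power and \( \zeta \)-value products generated by the inversion formula and by the \( \zeta \)-value tail of Proposition~\ref{prop:s33reduce} specialised at \( \mu \), these collapse to the remaining constants \( -\tfrac{2i\pi}{3}\zeta(5) \) (absorbed into \( \tfrac{i\pi}{3}(S_{3,2}(\mu) - 2\zeta(5)) \)), \( \tfrac{1}{324}(2i\pi)^3\zeta(3) \), \( -\tfrac12\zeta(3)^2 \) and \( -\tfrac{1829}{1944}\zeta(6) \).

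The principal obstacle lies entirely in part~(ii): the faithful bookkeeping of the many classical polylogarithm values of orders \( 4,5,6 \) at \( \mu \) and \( \mu^{-1} \), their products with powers of \( \log\mu = i\pi/3 \), and the correct branch choices, so that everything apart from \( 3\Li_6(\mu) \), \( \tfrac13\zeta(2)\Li_4(\mu) \), \( \tfrac{i\pi}{3}S_{3,2}(\mu) \) and the explicit \( \zeta \)-constants genuinely cancels. Part~(i) is by comparison a short linear elimination, contingent only on \eqref{eqn:s33m1} and the \( z=\tfrac12 \) specialisation of the three-term relation.
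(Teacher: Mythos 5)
Your proposal is correct. For part~(i) it coincides with the paper's proof: the paper likewise specialises Proposition~\ref{prop:s42three} at \( z=\tfrac12 \) (yielding \eqref{eqn:s42half}, an evaluation of \( S_{4,2}([-1]+2[\tfrac12]) \)) and then eliminates \( S_{4,2}(\tfrac12) \) from \eqref{eqn:s33m1}; the MZV Data Mine alternative you mention is also flagged there. For part~(ii) your skeleton is the paper's — specialise Proposition~\ref{prop:s33reduce} at \( \mu=e^{2\pi i/6} \), exploit \( 1-\mu=\tfrac{\mu}{\mu-1}=\mu^{-1} \), and eliminate \( S_{4,2}(\mu)-S_{4,2}(\mu^{-1}) \) — but you perform the elimination with the inversion relation (Proposition~\ref{prop:nielseninverse}), whereas the paper cites the three-term identity (Proposition~\ref{prop:s42three}). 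These are equivalent in substance, since the three-term identity is itself derived from reflection and inversion, but your route is slightly more economical: because \( \mu \) and \( 1-\mu \) are literally mutual inverses, a single application of inversion gives \( S_{4,2}(\mu)-S_{4,2}(\mu^{-1}) = 4\Li_6(\mu)+\log(-\mu^{-1})\Li_5(\mu)+\text{products} \), producing the stated \( 3\Li_6(\mu) \) at once, whereas the three-term identity at \( z=\mu \) only yields \( S_{4,2}(\mu)+2S_{4,2}(\mu^{-1}) \) and must be applied a second time at \( z=\mu^{-1} \) (or be supplemented by inversion anyway) to isolate the required difference. Your local computations check out: \( \tfrac{i\pi}{3}S_{3,2}(\mu) \) from \( -S_{3,2}(z)\log(1-z) \), \( \tfrac13\zeta(2)\Li_4(\mu) \) from \( -\tfrac{1}{2}\Li_4(z)\log^2(1-z) \), the cancellation of the \( \Li_6(\mu^{-1}) \) terms, and \( \Re\Li_5(\mu)=\tfrac{25}{54}\zeta(5) \) via the sixth-order distribution relation are all correct, so what remains is indeed only the bookkeeping of product terms.
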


\begin{proof}
\begin{enumerate}
		\item
Setting \( z = \frac{1}{2} \) in Proposition \ref{prop:s42three} leads to the following two-term identity
\begin{equation} \label{eqn:s42half}
	\begin{aligned} 
	S_{4,2}\bigg(\big[-1\big] + 2\bigg[\frac{1}{2}\bigg]\bigg) \= &
	4\Li_6\Big(\frac{1}{2}\Big)
	+2\Li_5\Big(\frac{1}{2}\Big) \log (2)
	 -\frac{51}{16} \zeta(6)
	 	-\frac{1}{2}\zeta (3)^2
	 \\&
	 -\Big( \frac{1}{16} \zeta (5)
	 -\zeta(2) \zeta (3) \Big) \log (2)
	 +\frac{1}{4 \cdot 2!} \zeta(4) \log ^2(2)
	 \\&
	 -\frac{2}{3!} \zeta (3) \log^3(2)
	  +\frac{2}{4!} \zeta(2) \log ^4(2) 
	  	 -\frac{6}{6!} \log ^6(2)
	  \,.
	\end{aligned}
\end{equation}
Now note that \eqref{eqn:s33m1} and \eqref{eqn:s42half} together imply \eqref{eqn:s33m1simpler}.
\item
 This follows from the \( S_{3,3} \) to \( S_{4,2} 
\) reduction (Proposition \ref{prop:s33reduce}) and the \( S_{4,2} \) three term identity (Proposition \ref{prop:s42three}). \qedhere
\end{enumerate}
\end{proof}

For the algebraic \( \Li_3 \) functional equation from Section \ref{sec:algfe}, we can reduce \( S_{4,2} \) to \( \Li_6 \), as expected.  This was also used in \cite{Ch} to obtain pure \( \Li_6 \) functional equations, from certain depth reductions of the depth 2 integral \( I_{5,1} \) under trilogarithm functional equations.

	\begin{Prop}[Proposition 7.6.12 in \cite{Ch}]\label{prop:s42alg}
	Let \(a,b,c\in \ZZ\sm\{0\}\), with \( a+b+c=0 \), and let \( \{p_i(t)\}_{i=1}^r \)  be the roots of \( x^a(1-x)^b = t \).  For convenience take \( a > 0 \).  Then the following reduction holds on the level of the mod-products symbol
	\begin{align*}
	& \sum_{i = 1}^{r} -\frac{1}{a} S_{4,2}(1-p_i(t)) + \frac{1}{b} S_{4,2}(p_i(t)) \modsh
	\\& \sum_{i=1}^{r} \frac{b-a}{a^2} \Li_6(1-p_i(t)) -  \frac{a - b}{b^2} \Li_6(p_i(t)) - \frac{1}{a+b} \Li_6(1 - p_i(t)^{-1}) \,.
	\end{align*}
\end{Prop}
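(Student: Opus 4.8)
The plan is to imitate the proof of Proposition \ref{prop:s32alg}, reducing the weight~$6$ statement to the two weight~$5$ results already established for $S_{3,2}$. Writing $p_i = p_i(t)$ and suppressing $\Symbsh$ on the front factors, the recursion \eqref{snprecursion} peels off the last tensor slot as
\[
\Symbsh S_{4,2}(z) \= \Symbsh S_{3,2}(z)\otimes z + \Symbsh \Li_5(z)\otimes(1-z)\,,
\]
which I apply both at $z = p_i$ and at $z = 1-p_i$ (so that $1-z = p_i$ in the latter). As in Section \ref{sec:algfe} I then substitute, in the peeled slot only, the relation $\otimes(1-p_i) = \tfrac1b\otimes t - \tfrac ab\otimes p_i$ coming from $p_i^a(1-p_i)^b = t$, while leaving the front symbols $\Symbsh S_{3,2}(\cdot)$ and $\Symbsh\Li_5(\cdot)$ untouched. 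After this substitution every term of both sides ends in $\otimes p_i$ or $\otimes t$, and it suffices to match these two parts.

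For the $\otimes p_i$ part I compare coefficients root-by-root. Collecting the contributions to $\otimes p_i$ on the left (the original $\otimes p_i$ terms together with the $-\tfrac ab\otimes p_i$ pieces produced by the substitution) and on the right, the required equality for each $i$ reduces, once the $\Li_5$-coefficients are simplified to a uniform $\tfrac1b$, to
\[
S_{3,2}(p_i) + S_{3,2}(1-p_i) \modsh \Li_5(p_i) + \Li_5(1-p_i) + \Li_5(1-p_i^{-1})\,,
\]
which is exactly the two-term reflection identity of Proposition \ref{prop:s32twoterm} evaluated at $p_i$. Hence all $\otimes p_i$ terms cancel.

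What remains is a sum of terms all ending in the common slot $\otimes t$. Factoring out $\otimes t$, the sum of the weight-$5$ front coefficients equals $-\tfrac{1}{ab}$ times
\[
\sum_{i} S_{3,2}(1-p_i) - \sum_{i}\Big(\tfrac{a-b}{a}\Li_5(1-p_i) + \tfrac ab\Li_5(p_i) + \tfrac{a}{a+b}\Li_5(1-p_i^{-1})\Big)\,.
\]
Since the $1-p_i$ are precisely the roots of $x^{b}(1-x)^{a} = t$, this is Proposition \ref{prop:s32alg} with the roles of $a$ and $b$ interchanged: one uses the $\Li_5$ inversion $\Symbsh\Li_5(w) = \Symbsh\Li_5(w^{-1})$ from Proposition \ref{prop:nielseninverse} to rewrite the term $\Li_5\big(\tfrac{p_i}{p_i-1}\big)$ appearing there as $\Li_5(1-p_i^{-1})$. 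Thus this combination vanishes modulo products, the $\otimes t$ part vanishes, and the two mod-products symbols agree.

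The main obstacle is the coefficient bookkeeping: one must check that the mixed $\Li_5$ coefficients on the right genuinely collapse so that the $\otimes p_i$ comparison becomes precisely Proposition \ref{prop:s32twoterm}, and that the $\otimes t$ coefficients reassemble into the $(a,b)$-swapped instance of Proposition \ref{prop:s32alg}. A secondary subtlety is that this swapped instance must be invoked with $b$ playing the role of the exponent taken positive in the statement, even when $b<0$; this is legitimate because the identity in Proposition \ref{prop:s32alg} holds at the level of the mod-products symbol for all nonzero $a,b$ with $a+b+c=0$, the assumption $a>0$ only serving to fix the constant in its corollary.
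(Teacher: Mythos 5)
Your proposal is correct and follows essentially the same route as the paper's proof: peel off the last tensor slot via the recursion \eqref{snprecursion}, substitute \( \otimes(1-p_i) = \tfrac1b\otimes t - \tfrac ab\otimes p_i \), and observe that the \( \otimes\, p_i \) part cancels by Proposition \ref{prop:s32twoterm} while the \( \otimes\, t \) part cancels by Propositions \ref{prop:s32twoterm} and \ref{prop:s32alg}. Your phrasing of the latter step as the \((a,b)\)-swapped instance of Proposition \ref{prop:s32alg} is logically equivalent to the paper's combination of the unswapped instance with the two-term reflection identity, and your remark about the \(a>0\) convention being inessential at symbol level is accurate.
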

	\begin{Cor}
	We have the clean single-valued identity
		\begin{align*}
	& \sum_{i = 1}^{r} -\frac{1}{a} \SCS_{4,2}(1-p_i(t)) + \frac{1}{b} \SCS_{4,2}(p_i(t)) \= 
	\\& \sum_{i=1}^{r} \frac{b-a}{a^2} \LiCS_6(1-p_i(t)) -  \frac{a - b}{b^2} \LiCS_6(p_i(t)) - \frac{1}{a+b} \LiCS_6(1 - p_i(t)^{-1}) \, .
	\end{align*}
\end{Cor}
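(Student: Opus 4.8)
The plan is to deduce this analytic (clean single-valued) identity directly from the symbol-level statement of Proposition~\ref{prop:s42alg} via the general lifting principle for clean single-valued functions. Proposition~\ref{prop:s42alg} already asserts that the mod-products symbol of the difference of the two sides vanishes, i.e.\ $\Symbsh\big(\sum_i(-\tfrac1a S_{4,2}(1-p_i) + \tfrac1b S_{4,2}(p_i)) - \sum_i(\text{the }\Li_6\text{ combination})\big) = 0$. By the lifting result recalled just after the definition of $\SCS_{n,p}$ (from \cite{Ch-Du-Ga}), combined with Lemma~\ref{lem:snpmodsymb}, the corresponding combination of clean single-valued functions $\SCS_{4,2}$ and $\LiCS_6$ is therefore equal to a constant $C$, independent of the variable $t$. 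It remains only to show $C = 0$.

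To pin down $C$, I would take the limit $t \to 0$, exactly as in the proof of the Corollary following Proposition~\ref{prop:s32alg}. As $t \to 0$ the roots $p_i(t)$ of $x^a(1-x)^b = t$ degenerate to the set $\{0, 1, \infty\}$, with multiplicities determined by the signs of $a$, $b$, $c$; consequently each argument appearing on either side, namely $p_i$, $1 - p_i$ and $1 - p_i^{-1}$, tends to a point of $\{0, 1, \infty\}$. Since $C$ is constant in $t$, its value is the corresponding sum of boundary values of $\SCS_{4,2}$ and $\LiCS_6$ at $\{0, 1, \infty\}$.

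The crucial point, and the reason the constant vanishes here in contrast to the weight $5$ case, is that the weight is now $6$, which is \emph{even}, so that $\zeta^{\sv}(6) = 0$. All the required boundary values turn out to be rational multiples of $\zeta^{\sv}(6)$, hence zero. Indeed $\LiCS_6(0) = \SCS_{4,2}(0) = 0$; the clean single-valued reflection relation gives $\SCS_{4,2}(1) = \tfrac{1}{6}\binom{6}{4}\zeta^{\sv}(6) = 0$ and $\LiCS_6(1) = \zeta^{\sv}(6) = 0$; and the clean single-valued inversion relations, using the identification $\SCS_{5,1} = \LiCS_6$, give $\LiCS_6(1/z) = -\LiCS_6(z)$ and $\SCS_{4,2}(1/z) = \SCS_{4,2}(z) - 4\LiCS_6(z)$, so that $\LiCS_6(\infty) = -\LiCS_6(0) = 0$ and $\SCS_{4,2}(\infty) = \SCS_{4,2}(0) - 4\LiCS_6(0) = 0$. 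Feeding these into the $t \to 0$ limit yields $C = 0$, which establishes the identity.

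I expect the only mildly delicate step to be the bookkeeping in the $t \to 0$ limit, namely matching the degenerate roots and their multiplicities to the three sign regimes of $(a,b,c)$ and confirming that every boundary argument lands in $\{0, 1, \infty\}$. However, because every boundary value is a multiple of $\zeta^{\sv}(6) = 0$, the precise multiplicities are irrelevant and $C$ must vanish regardless. This is exactly what makes the even-weight case cleaner than the odd-weight $S_{3,2}$ analogue, where nonzero multiples of $\zeta(5) = \tfrac12\zeta^{\sv}(5)$ survived in the Corollary to Proposition~\ref{prop:s32alg}.
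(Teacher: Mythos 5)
Your proposal is correct and follows essentially the same route as the paper: lift the symbol identity of Proposition~\ref{prop:s42alg} to a constant via the clean single-valued machinery, then evaluate the constant in the limit \( t \to 0 \) using \( \LiCS_6(0) = \LiCS_6(1) = \LiCS_6(\infty) = 0 \) and \( \SCS_{4,2}(0) = \SCS_{4,2}(1) = \SCS_{4,2}(\infty) = 0 \). Your additional justification of these boundary values via the clean reflection and inversion relations and \( \zeta^{\sv}(6) = 0 \) is sound and, as you note, renders the multiplicity bookkeeping in the three sign regimes of \( (a,b,c) \) immaterial.
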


\begin{proof}
	Consider the limit \( t \to 0 \) and use \( \LiCS_6(0) = \LiCS_6(1) = \LiCS_6(\infty) = 0 \) and \( \SCS_{4,2}(0) = \SCS_{4,2}(1) = \SCS_{4,2}(\infty) = 0 \).
	
	If \( b > 0 \), we obtain roots \( p_i = 0 \) with multiplicity \( a \) and \( p_i = 1 \) with multiplicity \( b \), giving constant \( 0 \).  If \( -a < b < 0 \), we obtain roots \( p_i = 0 \) with multiplicity \( a \), and the constant is also 0.  Otherwise \( b < -a \), and we obtain roots \( p_i = 0 \) with multiplicity \( a \) and roots \( p_i = \infty \) with multiplicity \( -b-a \), and the constant is still 0.
\end{proof}

\begin{proof}[Proof of proposition]
	Expand out as in the proof of Proposition \ref{prop:s32alg}, using the recursive definition of the mod-product symbol of \( S_{n,p}(z) \), and replace \( 1-p_i \) in the last tensor factor by \( t^{1/b} p_i^{-a/b} \).  The difference becomes
	\begin{align*}
		\sum_{i=1}^{r} \bigg( & \frac{1}{b} \big\{ S_{3,2}(1-p_i) + S_{3,2}(p_i) - \Li_5(1-p_i) - \Li_5(1-p_i^{-1}) - \Li_5(p_i) \big\} \otimes p_i
		 \\ 
		& -\frac{1}{a b} \Big\{ S_{3,2}(1-p_i) - \frac{a-b}{a} \Li_5(1-p_i) - \frac{a}{a+b} \Li_5 (1 - p_i^{-1}) - \frac{a}{b}	\Li_5(p_i) \Big\} \otimes t \bigg) \,.
	\end{align*}
	The first bracket cancels using 
	the two term \( S_{3,2}(z) + S_{3,2}(1-z) \) identity from Proposition \ref{prop:s32twoterm}.  The second factor cancels using this, and the reduction for \( S_{3,2} \) of the algebraic \( \Li_2 \) equation from Proposition \ref{prop:s32alg}.
\end{proof}

In particular, we expect a reduction of \( S_{4,2} \), applied to Goncharov's 840-term relation \cite{Go-ams} for \( \Li_3 \), to \( \Li_6 \). 
By analogy with the weight 5 case (Corollary \ref{cor:l5fe}), we anticipate an interesting many variable functional equation for $\Li_6$ to arise from applying the obvious 8-fold antisymmetrisation to such a reduction.

\subsection{\texorpdfstring{Depth reductions of \( S_{4,2}(-1) \), \( S_{4,2}(\frac12) \) and \( S_{4,2}(\phi^{-2})\) on the level of the coproduct}{Depth reductions of S\textunderscore{}\{4,2\}(-1) and S\textunderscore{}\{4,2\}(phi\textasciicircum{}(-2))}}\label{sec:s42m1reduction}

The motivic coproduct yoga  suggests that one can reduce \( S_{4,2}(-1) \) alone to classical polylogarithms and lower weight products, and that one can similarly reduce \( S_{4,2}(\phi^{-2}) \), where \( \phi = \frac{1}{2} (1 + \sqrt{5}) \).  These claims are expected because the following trilogarithm identities
\[
 \{-1\}_3 \= -\frac{3}{4} \{1\}_3 \,, \quad 
 \{\phi^{-2}\}_3 \= \frac{4}{5} \{1\}_3 \,,
\]
lead to the following trivial cobrackets for \( S_{4,2} \) using the antisymmetry of the wedge:
\begin{alignat*}{4}
\delta S_{4,2}(-1) \= && \{-1\}_3 \wedge \{1\}_3 \= && -\frac{3}{4} \{1\}_3 \wedge \{1\}_3 \= && 0 \, \\
\delta S_{4,2}(\phi^{-2}) \= && \{\phi^{-2}\}_3 \wedge \{1\}_3 \= && \frac{4}{5} \{1\}_3 \wedge \{1\}_3 \= && 0 \,.
\end{alignat*}

The first trilogarithm identity is just the case \( z = -1 \) of the trilogarithm duplication relation
\[
	\Li_3(z) + \Li_3(-z) \= \frac{1}{4} \Li_3(z^2) \,.
\]
The second identity, known to Landen, follows from duplication and the three-term relation \eqref{eq:li3threeterm}.  For the analytic functions we have
\begin{align*}
	\Li_3(-1) & \= -\frac{3}{4} \zeta(3) \,, \\
	\Li_3(\phi ^{-2}) & \= \phantom{{}+{}} \frac{4}{5} \zeta (3) -\frac{4}{5} \zeta (2) \log (\phi )+\frac{2 }{3}\log ^3(\phi
	) \,.
\end{align*}

The corresponding depth reductions for \( S_{4,2}(-1) \) and \( S_{4,2}(\phi^{-2}) \) would immediately follow from the conjectured reduction of \( S_{4,2} \) of the trilogarithm duplication relation \( \Li_3(z) + \Li_3(-z) - \frac{1}{2^2} \Li_3(z^2) \= 0 \).  Unfortunately, we do not have such an expression.  We can still investigate these reductions numerically, and via other functional identities. \medskip

We are able to find a certain mod-products symbol level identity
\[
	I_{5,1}(z, -1) \modsh \sum\nolimits_i \alpha_i \Li_6(f_i) + \sum\nolimits_j \beta_j S_{4,2}(g_j) \, ,
\]
where the \( S_{4,2} \)-arguments evaluate to \( \pm 1 \), at \( z = 1 \).  
Knowing how \( I_{5,1}(1,-1) \) evaluates in terms of \( S_{4,2}(-1) \), 
one is able to obtain an identity expressing \( S_{4,2}(-1) \) in terms 
of \( \Li_6 \) modulo products.
With some work one can also derive an analytic identity. Using a well known lattice reduction algorithm (`LLL') and some post hoc simplifications we have 
searched for the simplest such identity, and so far we have found 
the following relation which holds to high precision.  We have verified it to 10,000 decimal places in PARI/GP \cite{PARI2}.  Here \( \overset{?}{=} \) means the result has been numerically checked to high precision, but it has not yet been formally proven.
\begin{equation}\label{eqn:s42m1reduction}
	\begin{aligned}
	S_{4,2}(-1) \overset{?}{\=} & \frac{1}{13} \bigg(\frac{1}{3}\Li_6\Big(-\frac{1}{8}\Big)-162
	\Li_6\Big(-\frac{1}{2}\Big)-126
	\Li_6\Big(\frac{1}{2}\Big)\bigg)
	-\frac{1787 }{624}\zeta (6)
	+\frac{3}{8} \zeta (3)^2
	\\& 
	{}+\frac{31}{16} \zeta (5) \log(2)
	-\frac{15}{26} \zeta (4) \log ^2(2)
	+\frac{3}{104} \zeta (2) \log^4(2)
	-\frac{1}{208} \log ^6(2) \,.
\end{aligned}
\end{equation}
Note that the coefficient of \( \Li_6(-\frac{1}{8}) \) is written deliberately as \( \frac{1}{3} \) inside the parentheses, for structural reasons as follows. \medskip

\paragraph{\em Strategy for finding \( S_{4,2}(-1) \) evaluation by matching the coproduct}
	Using the coproduct, we can better understand the nature and structure of this reduction, and attempt to generalise it to higher cases.  This equality on the motivic level means that the coproducts (reduced coproducts for simplicity) of both sides must agree.  We can compute that
	\[
		\Delta' S_{4,2}(-1) \=  \frac{3}{4} \zeta(3) \otimes \zeta(3) + 
		\frac{31}{16} \log(2) \otimes \zeta(5) \,.
	\]
	Firstly, it is straightforward to see \(
		\Delta' \zeta(3)^2 \= 2 \zeta(3) \otimes \zeta(3) \),
	So the \( \zeta(3) \otimes \zeta(3) \) component of \( \Delta' S_{4,2}(-1) \) is matched by
	\[
		\frac{3}{8} \zeta(3)^2 \, ,
	\]
	exactly as appears in the reduction \eqref{eqn:s42m1reduction}.  How is the rest of the coproduct matched by this reduction?  Recall that
	\[
		\Delta' \Li_n(x) \= \sum_{k=1}^{n-1} \frac{1}{(n-k)!} \Li_k(x) \otimes 
		\log^{n-k}(x) \, ,
	\]
	and the right hand factor is only defined ``modulo \( i\pi \)'', because of branch cut ambiguities.  So  the following weight \( (5,1) \)-part of the coproduct becomes
	\begin{align*}
		& \Delta^{(5,1)} \Bigg( \frac{1}{3}\Li_6\Big(-\frac{1}{8}\Big)-162
		\Li_6\Big(-\frac{1}{2}\Big)-126
		\Li_6\Big(\frac{1}{2}\Big) \Bigg) \= \\
		& -\bigg(\Li_5\Big(-\frac{1}{8}\Big)-162
		\Li_5\Big(-\frac{1}{2}\Big)-126	\Li_5\Big(\frac{1}{2}\Big) \bigg) \otimes \log(2) \, ,
	\end{align*}
	the factor \( \frac{1}{3} \) annihilating with the \( \log(-\frac{1}{8}) = -3 \log(2) + i \pi \).  This is essentially an avatar of Lewin's pseudo-integration process \cite[Section 1.4]{lewin1991structural}.
	
	Recall now the identity \cite[p. 419]{Za1} for the single-valued polylog
	\[
	\LiZS_5\Big({-}\frac{1}{8}\Big) - 162 \LiZS_5\Big({-}\frac{1}{2}\Big) - 126 \LiZS_5\Big(\frac{1}{2}\Big) \= \frac{403}{16} \zeta(5) \, ;
	\]
	this is already given in \cite[Equation 7.100]{Le} for the analytic function \( \Li_5 \) with the following explicit lower order terms (after correcting the missing coefficient $\frac{1}{4}$ for  $\pi^2 \log(2)^3$)
	\begin{align*}
		& \Li_5\Big({-}\frac{1}{8}\Big) - 162 \Li_5\Big({-}\frac{1}{2}\Big) - 126 \Li_5\Big(\frac{1}{2}\Big) \= \\
		& \frac{403}{16} \zeta(5) - \frac{3}{8} \log(2)^5 + \frac{6}{4} \zeta(2) \log(2)^3 - 15 \zeta(4) \log(2) \, .
	\end{align*}
	In particular, we obtain the following term in the coproduct
	\[
		-\frac{403}{16} \zeta(5) \otimes \log(2) \, .
	\]
	So to match the actual term \( \frac{31}{16} \log(2) \otimes \zeta(5) \) 
	appearing in \( \Delta' S_{4,2}(-1) \) we can take the combination
	\[
		\frac{1}{13} \Bigg( \frac{1}{3}\Li_6\Big(-\frac{1}{8}\Big)-162
		\Li_6\Big(-\frac{1}{2}\Big)-126
		\Li_6\Big(\frac{1}{2}\Big) \Bigg) + \frac{31}{16} \zeta(5) \log(2) \, ,
	\]
	as is manifest in the reduction in \eqref{eqn:s42m1reduction}.  This 
	explains the main term of the reduction.  We have, for simplicity, ignored 
	much of the coproduct, not just the lower order product terms in the weight 
	\( (5,1) \)-part, but also the weight \( (k, 6-k) \)-parts, for \( k = 1, 
	\ldots, 4 \).  This is not a cause for concern, since these parts are 
	strictly simpler and so easier to deal with; they involve only products in 
	the left hand factor, or higher powers of \( \log(2) \) in the right hand 
	factor. \medskip
	
	In fact, by using analytic identities among \( \Li_k(-\frac{1}{8}), 
	\Li_k(-\frac{1}{2}), \Li_k(\frac{1}{2}) \), for \( k = 2, \ldots, 5 \), one 
	can derive the following coproduct expression purely involving zeta values, 
	and \( \log(2) \)
	\begin{align*}
		& \Delta' \bigg( \frac{1}{3}\Li_6\Big(-\frac{1}{8}\Big)-162
		\Li_6\Big(-\frac{1}{2}\Big)-126
		\Li_6\Big(\frac{1}{2}\Big) \bigg) \= \\
		&
		-\frac{403}{16} \zeta (5)\otimes \log (2)
		\quad + \quad\frac{15}{2} \bigg( \zeta(4) \otimes \log ^2(2)
		+ 2 \left(\zeta(4) \log(2)\right)\otimes \log (2) \bigg)
		\\
		&
		-\frac{3}{8} \bigg( 
		 \zeta(2) \otimes \log^4(2)
		+ 4 \left(\zeta(2) \log (2)\right)\otimes \log ^3(2)
		+ 6 \left(\zeta(2) \log^2(2)\right)\otimes \log ^2(2)
		+ 4 \left(\zeta(2) \log ^3(2)\right)\otimes \log(2)
		\bigg)
		\\& + \frac{1}{16} \bigg( 
		 6 \log ^5(2)\otimes \log (2)		
		+ 15 \log ^4(2)\otimes \log ^2(2)
		+ 20 \log^3(2)\otimes \log ^3(2)
		\\& 
		\phantom{ + \frac{16}{16} \bigg( } %formatting
		 +  15 \log ^2(2)\otimes \log^4(2)
		+ 6 \log (2)\otimes \log^5(2) \Big) \,.
	\end{align*}
	The bracketed terms can be recognised as reduced coproducts of simple product expressions, so that the resulting combination 
	\begin{align*}
		\frac{1}{13} \Bigg( & \frac{1}{3}\Li_6\Big(-\frac{1}{8}\Big)-162
		\Li_6\Big(-\frac{1}{2}\Big)-126
		\Li_6\Big(\frac{1}{2}\Big) \\
		& - \frac{15}{2}  \zeta(4) \log^2(2) + \frac{3}{8} \zeta(2) \log^4(2) - 
		\frac{1}{16} \log^6(2) + \frac{403}{16} \zeta(5) \log(2)  \Bigg) 
	\end{align*}
	has reduced coproduct exactly \( \frac{31}{16} \log(2) \otimes \zeta(5) \).

	This explains all of the terms and  fixes all of the coefficients in the reduction, except for the \( \zeta(6) \) coefficient.  Like all Riemann zeta values \( \zeta(n) \), it has trivial reduced coproduct, and so this coefficient can only be fixed by numerically evaluating. \medskip

Assuming~\eqref{eqn:s42m1reduction}, we can use \eqref{eqn:s42half} and~\eqref{eqn:s33m1simpler} to obtain the following.
\begin{Rem}\label{rem:s42evaluations}
	One has the following reductions of \( S_{4,2}(\frac{1}{2}) \) and \( 
	S_{3,3}(-1) \) to polynomials in classical polylogarithms
	\begin{align*}
		S_{4,2}\Big(\frac{1}{2}\Big) \overset{?}{\=} & 
		-\frac{1}{26} \Big(
		\frac{1}{3}\Li_6\Big(-\frac{1}{8}\Big)
		-162\Li_6\Big(-\frac{1}{2}\Big)
		-178\Li_6\Big(\frac{1}{2}\Big)\bigg)
		-\frac{7}{16}\zeta (3)^2
		-\frac{101}{624} \zeta(6)
		\\& 
		+\Li_5\Big(\frac{1}{2}\Big) \log (2)
		-\Big(\zeta (5)	-\frac{1}{2} \zeta (2) \zeta (3) \Big) \log (2)
		+\frac{73}{208} \zeta (4) \log ^2(2)
		-\frac{1}{6} \zeta (3) \log	^3(2)
		\\& 
		+\frac{17}{624} \zeta (2) \log ^4(2)
		-\frac{11 }{6240}\log ^6(2) \, ,
		\\[2ex]
		S_{3,3}(-1) \overset{?}{\=} & \frac{3}{26} \bigg(
			\frac{1}{3} \Li_6\Big(-\frac{1}{8}\Big)
			-162\Li_6\Big(-\frac{1}{2}\Big)
			 - 126 \Li_6\Big(\frac{1}{2}\Big)\bigg)
		+\frac{5}{16} \zeta (3)^2
		-\frac{1657}{416} \zeta(6)
		\\&
		+\frac{93}{32} \zeta(5) \log (2)
		-\frac{45}{52} \zeta (4) \log ^2(2)
		+\frac{9}{208} \zeta (2) \log ^4(2)
		-\frac{3}{416} \log ^6(2) \, .
	\end{align*}
\end{Rem}
We emphasise that the only uncertainty in these equations lies in the respective coefficient of $\zeta(6)$, as the coproduct expressions of both sides agree in each case. \medskip

\paragraph {\em Aside: connection with alternating MZV's} The reduction from \eqref{eqn:s42m1reduction} allows us to give an apparently new evaluation for some weight 6 alternating (Euler) MZV's, and thence reduce all weight $\leq6$ alternating MZV's to polynomials in classical polylogarithms.

More explicitly, one has the equalities
\begin{align*}
	S_{4,2}(-1) &= \zeta(1, \overline{5}) \,, \\
	\zeta(1,1,1,\overline{3}) - \frac{1}{2} \zeta(1,\overline{5}) &= \begin{aligned}[t]
	& 2 \Li_6\Big(\frac{1}{2}\Big)+2
	\Li_5\Big(\frac{1}{2}\Big) \log (2)+\Li_4\Big(\frac{1}{2}\Big) \log ^2(2)-\frac{1}{4}\zeta (3)^2 \\& +\frac{7}{24}
	\zeta (3) \log ^3(2)-\frac{53}{32} \zeta(6)+\frac{1}{36}\log ^6(2)-\frac{1}{8} \zeta(2) \log ^4(2)\,,
	\end{aligned}
\end{align*}
where
\begin{alignat*}{5}
	&\zeta(1,\overline{5}) && \ceq && \:\:\:\:\:\:\: \sum_{0 < n_1 < n_2} \:\:\:\:\:\:\: && \:\: \frac{(-1)^{n_2}}{n_1 n_2^5} \:\: &&\= \Li_{1,5}(1, -1) \,, \\
	&\zeta(1, 1, 1, \overline{3}) &&\ceq && \sum_{0 < n_1 < n_2 < n_3 < n_4} && \frac{(-1)^{n_4}}{n_1 n_2 n_3 n_4^3} && \= \Li_{1,1,1,3}(1, 1, 1, -1)
\end{alignat*}
are alternating MZV's of weight $6$.

Using the MZV Data Mine \cite{mzvDM}, a set of algebra generators of alternating MZV's is given up to weight 6 by
\[
	\big\{ \log(2), \zeta(2), \zeta(3), \zeta(5), \zeta(1,\overline{3}), \zeta(1, 1, \overline{3}), \zeta(1, 1, 1, \overline{3}), \zeta(1,\overline{5})  \big\} \, .
\]
The strictly alternating MZV's \( \zeta(1,\overline{3}) \) and \( \zeta(1,1,\overline{3}) \) are already known to be polynomials in classical polylogarithms, namely
\begin{align*}
	\zeta(1, \overline{3}) \= 
	& 2 \Li_4\Big(\frac{1}{2}\Big)
	-\frac{15}{8} \zeta(4)
	+\frac{7}{4} \zeta (3) \log (2)
	-\frac{1}{2!} \zeta(2) \log ^2(2)
	+\frac{2}{4!}\log^4(2)
	 ,\, \\[1ex]
	\zeta(1, 1, \overline{3}) \= 
	& -2 \Li_5\Big(\frac{1}{2}\Big)
	-2 \Li_4\Big(\frac{1}{2}\Big) \log (2)
	+\frac{33}{32}\zeta (5)
	 +\frac{1}{2} \zeta(2) \zeta(3)
	\\&
	-\frac{7}{8} \zeta (3) \log ^2(2)
	+\frac{1}{3} \zeta(2) \log ^3(2) 
	-\frac{1}{15}\log ^5(2)
	\, .
\end{align*}  Together with the above reduction for \( \zeta(1,\overline{5}) \), and consequently \( \zeta(1,1,1,\overline{3}) \), one obtains a reduction, albeit complicated, of all alternating MZV's of weight \( \leq 6 \) to polynomials in classical polylogarithm values.  \medskip

\paragraph {\em Reduction of \( S_{4,2}(\phi^{-2}) \) obtained using the coproduct.} As explained in the paragraph on the strategy for finding \( S_{4,2}(-1) \) after \eqref{eqn:s42m1reduction}, a great deal of structure in the \( S_{4,2}(-1) \) reduction above becomes manifest in the coproduct.  By combining this understanding with the \( S_{3,2}(\phi^{-2}) \) reduction found earlier, we can produce a very short list of potentially relevant polylog arguments for a candidate \( S_{4,2}(\phi^{-2}) \) reduction.  We quickly find the following with LLL to high precision, which was then subsequently verified to 10,000 decimal places in PARI/GP \cite{PARI2}.  A complete analysis of the coproduct, similar to the case \( S_{4,2}(-1) \) above, explains all of the coefficients and terms, except for the \( \zeta(6) \) coefficient which must be numerically fixed.
\begin{align*}
S_{4,2}(\phi ^{-2}) \overset{?}{\=} &
\frac{1}{396} \Li_6\Big( 2 \big[ \phi ^{-6} \big] -128 \big[ \phi ^{-3}\big]+801
\big[\phi ^{-2}\big]-576 \big[\phi^{-1} \big] \Big)
+\frac{35 }{99}\zeta (6)
+\frac{2}{5}  \zeta (3)^2
\\&
+ \Li_5 \left(\phi ^{-2}\right) \log (\phi)
-\zeta (5)\log (\phi )
+\frac{2}{11} \zeta (4) \log ^2(\phi )
-\zeta (3) \Li_3\left(\phi ^{-2}\right)
\\&
+\frac{10}{33} \zeta (2) \log ^4(\phi )
-\frac{79}{990} \log ^6(\phi ) \,.
\end{align*}

\smallskip

\section{Identities in weight 7}
In this section we `depth reduce' \( S_{4,3} \), and give evaluations of it at $-1$ and $\frac{1}{2}$. Furthermore, in order to guarantee the vanishing of the coproduct terms and hence to have a chance to depth reduce \( S_{5,2} \) we need to invoke functional equations which hold  simultaneously for \( \Li_2 \) and  \( \Li_4 \). 
This is the smallest weight where such a requirement is needed, and in general we would need to understand simultaneous functional equations for different \( \Li_a \). An approach for finding equations of that type, at least with algebraic arguments, is outlined in Section \ref{sec:limlin}.
Finally, in Section \ref{sec:blochgroupids}, we also corroborate our expectations on linear combinations which simultaneously represent an element of both (higher) Bloch groups \( \mathcal{B}_2(\QQ) \) and   \( \mathcal{B}_4(\QQ) \).

\smallskip
\paragraph{\em Preconsiderations.} The 2-part of the motivic coproduct of \( S_{5,2}(z) \) and \( S_{4,3}(z) \) are computed to be
\begin{align*}
	\delta S_{5,2}(z) & \= \{z\}_2 \wedge \{1\}_5 + \{z\}_4 \wedge \{1\}_3 \, , \\
	\delta S_{4,3}(z) & \= -2 \{z\}_2 \wedge \{1\}_5 - \big( \{z\}_4 + S_{2,2}(z) \big)\wedge \{1\}_3 \\*
	& \= -2 \{z\}_2 \wedge \{1\}_5 - \Big( 2 \{z\}_4 - \{1-z\}_4 + \Big\{\frac{z}{z-1}\Big\}_4 \Big)\wedge \{1\}_3 \,.
\end{align*}

Hence we expect a reduction of \( S_{5,2} \) to \( \Li_7 \) only when \( \sum \alpha_i [x_i] \) simultaneously satisfies a \( \Li_2 \) and a \( \Li_4 \) identity.  On the other hand, we showed in Theorem \ref{thm:nielsendepth} that \( S_{4,3} \) can be reduced to lower depth.  Since its coproduct is matched by
\begin{align*}
	-S_{5,2}(1-z)+2 S_{5,2}(z)+S_{5,2}\Big(\frac{z}{z-1}\Big) \, ,
\end{align*}
the difference should be expressible in terms of \( \Li_7 \)'s.

\begin{Prop} The following identity follows from inversion and reflection, and it reduces \( S_{4,3}(z) \) to lower depth
\begin{align*}
	S_{4,3}(z) \= & -S_{5,2}(1-z)+2
	S_{5,2}(z)+S_{5,2}\Big(\frac{z}{z-1}\Big) \\ & + 2 \Li_7(1-z)-3 \Li_7(z)-3 \Li_7\Big(\frac{z}{z-1}\Big)  \Mod{products} \,.
\end{align*}

\begin{proof}
	The polynomial invariant of the difference of the left hand side and the right hand side is
	\[
		10 X^3 Y^2 - \Big( 5 X Y^4 + 10 X^4 Y +  5 (X - Y)^4 Y -2 Y^5  -3 X^5 + 3 (X - Y)^5 \Big) \= 0 \, . \qedhere
	\]	
\end{proof}
\end{Prop}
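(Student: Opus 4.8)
The plan is to avoid transcendental computation entirely and verify the identity combinatorially, using the bijective dictionary between Nielsen polylogarithms at anharmonic arguments and two‑variable homogeneous polynomials from Section~\ref{sec:spanningset}. Recall from there (and from Lemma~\ref{lem:snpmodsymb}) that the mod‑products symbol of $S_{n,p}$ of weight $N=n+p$ is faithfully encoded by $S_{n,p}(z)\mapsto \binom{N-2}{p-1}X^{n-1}Y^{p-1}$ under $z\leftrightarrow X$, $1-z\leftrightarrow Y$; that an arbitrary anharmonic transformation $\gamma$ contributes the sign of $\gamma\in\mathfrak{S}_3$ times the monomial obtained by substituting the additive classes of $\gamma(z)$ and $1-\gamma(z)$; and, crucially, that whenever the resulting polynomial invariant of a combination vanishes, the corresponding mod‑products symbol identity holds and is automatically a consequence of inversion and reflection (Propositions~\ref{prop:nielseninverse} and~\ref{prop:nielsenreflection}). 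Since $\Li_7=S_{6,1}$, all six functions in the statement live in this framework. Here the $S_{5,2}$‑part is not free: it is dictated by matching the $2$‑part of the cobracket, which by the Preconsideration equals $-S_{5,2}(1-z)+2S_{5,2}(z)+S_{5,2}(\tfrac{z}{z-1})$, and the three $\Li_7$ coefficients are then the unique choice absorbing the leftover depth‑$1$ discrepancy.

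First I would translate each term into its polynomial image with $N=7$ (so $N-2=5$). For the three standard arguments this is immediate: $S_{4,3}(z)\mapsto\binom{5}{2}X^3Y^2=10X^3Y^2$, $S_{5,2}(z)\mapsto 5X^4Y$, $S_{5,2}(1-z)\mapsto -5XY^4$, $\Li_7(z)=S_{6,1}(z)\mapsto X^5$, and $\Li_7(1-z)\mapsto -Y^5$. The one step requiring care is the sixth anharmonic value $\tfrac{z}{z-1}$, which is not tabulated in Section~\ref{sec:spanningset}: it is an odd (transposition) element of the anharmonic $\mathfrak{S}_3$, with additive classes $[\tfrac{z}{z-1}]=X-Y$ and $[1-\tfrac{z}{z-1}]=[-\tfrac{1}{z-1}]=-Y$, so that $S_{n,p}(\tfrac{z}{z-1})\mapsto -\binom{5}{p-1}(X-Y)^{n-1}(-Y)^{p-1}$. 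This yields $S_{5,2}(\tfrac{z}{z-1})\mapsto 5(X-Y)^4Y$ and $\Li_7(\tfrac{z}{z-1})\mapsto -(X-Y)^5$.

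Assembling these with the stated coefficients, the polynomial image of the difference (left‑hand side minus right‑hand side) is exactly
\[
10X^3Y^2-\Big(5XY^4+10X^4Y+5(X-Y)^4Y-2Y^5-3X^5+3(X-Y)^5\Big),
\]
as displayed in the statement. The final step is the routine finite expansion: expanding $(X-Y)^4Y$ and $(X-Y)^5$ and collecting monomials, the coefficients of $X^5,\,X^4Y,\,X^2Y^3,\,XY^4,\,Y^5$ all cancel and the bracket collapses to $10X^3Y^2$, so the displayed polynomial vanishes. By the bijective correspondence of Section~\ref{sec:spanningset}, the vanishing of this invariant certifies the mod‑products symbol identity and shows it is a consequence of inversion and reflection; hence it lifts to the claimed function‑level identity modulo products. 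As a cross‑check one may instead differentiate both sides using $(z\tfrac{d}{dz})S_{n,p}=S_{n-1,p}$, reduce to the analogous weight‑$6$ identity, and fix the (product) constant of integration by letting $z\to 0$.

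The only genuinely delicate point will be the bookkeeping for the sixth argument $\tfrac{z}{z-1}$ — getting both the overall sign coming from the antisymmetric $\Li_2$‑type factor and the substitution $(X,Y)\mapsto(X-Y,-Y)$ correct; everything downstream is a finite polynomial expansion. One should also keep in mind that the phrase ``follows from inversion and reflection'' is not a separate thing to check by hand but is precisely the content of the module‑theoretic bijection of Section~\ref{sec:spanningset}, which guarantees that no relation beyond inversion and reflection is needed once the polynomial invariant is seen to vanish.
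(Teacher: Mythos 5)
Your proposal is correct and is essentially the paper's own proof: both reduce the claim to the vanishing of the polynomial invariant $10X^3Y^2-\big(5XY^4+10X^4Y+5(X-Y)^4Y-2Y^5-3X^5+3(X-Y)^5\big)$ via the dictionary of Section~\ref{sec:spanningset}, which simultaneously certifies that the identity follows from inversion and reflection. Your explicit derivation of the image of the sixth anharmonic argument $\tfrac{z}{z-1}$ (odd permutation, substitution $(X,Y)\mapsto(X-Y,-Y)$) is accurate and merely fills in a step the paper leaves implicit.
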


At the value \( z = -1 \), it follows from the inversion identity of \( S_{n,2} \) in Proposition \ref{prop:nielseninverse} that \( S_{5,2}(-1) \) is reducible, and
\begin{align*}
S_{5,2}(-1) \= 
 -\frac{251}{128} \zeta (7) 
 +\frac{1}{2} \zeta(2)\zeta(5)
 +\frac{7}{8}\zeta(3) \zeta (4)
 \,.
\end{align*}

However, since \( \delta S_{5,2}(\tfrac{1}{2}) = \{\tfrac{1}{2}\}_4 \wedge \{1\}_3 \neq 0 \), we do not expect a reduction of this to lower depth.  Similarly \( S_{4,3}(-1) \) and \( S_{4,3}(\tfrac{1}{2}) \) both have non-vanishing cobracket involving \( \{\tfrac{1}{2}\}_4 \wedge \{1\}_3 \).  But there is the following reduction of each to \( S_{5,2}(\tfrac{1}{2}) \) and simpler objects.
\begin{align*}
S_{4,3}(-1) \= &
	2 S_{5,2}\Big(\frac{1}{2}\Big)
	-6 \Li_7\Big(\frac{1}{2}\Big)
	+2 S_{4,2}\Big(\frac{1}{2}\Big) \log (2)
	-6\Li_6\Big(\frac{1}{2}\Big) \log (2)
	-2 \Li_5\Big(\frac{1}{2}\Big) \log ^2(2)
	\\& 
	-\frac{31}{32} \zeta (7)
	+2 \zeta (2) \zeta (5)
	+\frac{11}{4} \zeta (4) \zeta(3)
	+\frac{1}{2!} \Big( \frac{1}{16} \zeta (5) - \zeta (2) \zeta (3) \Big) \log^2(2)
	\\&
	 -\frac{1}{2 \cdot 3!} \zeta(4) \log ^3(2)	
	+\frac{4}{4!} \zeta (3) \log ^4(2)
	-\frac{8}{5!} \zeta (2) \log^5(2)
	+\frac{36}{7!}\log ^7(2)
	 \,, \\[1em]
	S_{4,3}\Big(\frac{1}{2}\Big) \= & 
	S_{5,2}\Big(\frac{1}{2}\Big)
	-\Li_7\Big(\frac{1}{2}\Big)
	-S_{4,2}\bigg(\big[-1\big] + \bigg[\frac{1}{2}\bigg] \bigg)\log(2)
	+3\Li_6\Big(\frac{1}{2}\Big) \log(2)
	+\frac{3}{2} \Li_5\Big(\frac{1}{2}\Big) \log ^2(2)
	\\& 
	+\frac{255}{128} \zeta (7)
	-\frac{1}{2} \zeta (2) \zeta (5)
	-\frac{1}{8} \zeta (4) \zeta (3)
	-\frac{125}{32} \zeta(6) \log (2)
	+\frac{1}{2!} \bigg( \frac{15}{8} \zeta (5) + \zeta (2) \zeta (3) \bigg)\log ^2(2)
	\\& 
	-\frac{1}{2 \cdot 3!} \zeta (4) \log ^3(2)
	-\frac{5}{4!} \zeta (3) \log ^4(2)
	+\frac{7}{5!} \zeta (2) \log ^5(2)
	-\frac{48}{7!}\log ^7(2)
	\,.
\end{align*}

Moreover \( S_{5,2}(\tfrac{1}{2}) \) should be the only new irreducible object needed, by combining with the earlier reductions of \( S_{4,2}(-1) \) and \( S_{4,2}(\tfrac{1}{2}) \) in \eqref{eqn:s42m1reduction} and Remark \ref{rem:s42evaluations}.

\subsection{\texorpdfstring{\( \Li_2 + \Li_4 \) functional equations}{Li\textunderscore{}2 + Li\textunderscore{}4 functional equations}}  \label{sec:s52alg} Recall from Section \ref{sec:algfe} that we have the following \( \Li_4 \) functional equation
\[
	\sum_{i = 1}^{r} \frac{1}{a} \Big\{\frac{1}{1-p_i(t)}\Big\}_4 + \frac{1}{b} \{p_i(t)\}_4 + \frac{1}{c} \{ 1 - p_i(t)^{-1} \}_4 \= 0 \, ,
\]
where \( \{p_i(t)\}_{i=1}^r \) are the roots of \( x^a (1 - x)^b = t \), for fixed \( a, b, c \in \ZZ \sm \{0\}  \) with \( a + b + c = 0 \).

One can notice that the individual orbits are already \( \Li_2 \) functional equations, since under the six-fold symmetry each reduces to a multiple of
\[
	\sum_{i=1}^{r} \{p_i(t)\}_2 \= 0 \, .
\]
Hence \( S_{5,2} \) of the same combination should be expressible in terms of \( \Li_7 \).  As was noted in Section \ref{sec:algfe}, for the case \( (a,b,c) = (1,2,-3) \), the roots of the equation can be rationally parametrised over \( \QQ \), giving a functional equation even with \emph{rational} arguments.

\begin{Prop}\label{prop:s52alg}
	Let \(a,b,c\in \ZZ\sm\{0\}\), with \( a+b+c=0 \), and let \( \{p_i(t)\}_{i=1}^r \)  be the roots of \( x^a(1-x)^b = t \).  For convenience take \( a > 0 \).
	Then the following reduction holds on the mod-products symbol
	\begin{align*}
		& \sum_{i = 1}^{r} \frac{1}{a} S_{5,2}\Big(\frac{1}{1-p_i(t)}\Big) - \frac{1}{b} S_{5,2}(p_i(t)^{-1}) + \frac{1}{c} S_{5,2}(1 - p_i(t)^{-1}) \modsh \\& \sum_{i=1}^{r} \frac{3a + b}{a^2} \Li_7\Big(\frac{1}{1-p_i(t)}\Big) - \frac{3 b + a}{b^2} \Li_7(p_i(t)^{-1}) - \frac{3c + a}{c^2} \Li_7(1 - p_i(t)^{-1}) \,.
	\end{align*}
\end{Prop}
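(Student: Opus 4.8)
The plan is to reprise the template of Propositions~\ref{prop:s32alg} and~\ref{prop:s42alg} one weight higher, reducing everything to the weight-$6$ results already in hand. Writing $p_i = p_i(t)$ and abbreviating the three arguments as $z_1 = \tfrac{1}{1-p_i}$, $z_2 = p_i^{-1}$, $z_3 = 1-p_i^{-1}$, I would first strip off the last tensor slot using the recursion~\eqref{snprecursion},
\[
\Symbsh S_{5,2}(z_j) \= \Symbsh S_{4,2}(z_j)\otimes z_j + \Symbsh\Li_6(z_j)\otimes(1-z_j)\,,
\]
together with $\Symbsh\Li_7(z_j) = \Symbsh\Li_6(z_j)\otimes z_j$ on the right-hand side. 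Substituting the relation $1-p_i = t^{1/b}p_i^{-a/b}$ from Section~\ref{sec:algfe} (valid on the symbol modulo torsion) then rewrites every last factor $\otimes z_j$ and $\otimes(1-z_j)$ as a $\QQ$-combination of $\otimes t$ and $\otimes p_i$; for instance $\otimes z_1 = -\tfrac1b\otimes t + \tfrac ab\otimes p_i$ and $\otimes z_3 = \tfrac1b\otimes t + \tfrac cb\otimes p_i$, while $\otimes z_2 = -\otimes p_i$.

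Since $t$ is transcendental and each $p_i$ is algebraic over $\QQ(t)$, the difference of the two sides splits into a part ending in $\otimes p_i$ and a part ending in $\otimes t$, which can be treated separately. For the $\otimes p_i$-part the key observation is that $\{z_1,z_2,z_3\} = \{z,1-z,\tfrac{z}{z-1}\}$ with $z = p_i^{-1}$ is exactly a three-term triple: applying the $S_{4,2}$ three-term relation (Proposition~\ref{prop:s42three}) collapses $\Symbsh\big(S_{4,2}(z_1)+S_{4,2}(z_2)+S_{4,2}(z_3)\big)$ to $2\,\Symbsh\big(\Li_6(z_1)+\Li_6(z_2)+\Li_6(z_3)\big)$, after which the residual $\Li_6$ terms cancel against the contributions coming from the $\Li_7$ part of the right-hand side. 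This step is purely anharmonic and requires no functional equation of the roots.

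For the $\otimes t$-part, which only has to vanish after summing over all the roots, I would first normalise the surviving $S_{4,2}(z_j)$ using the $S_{4,2}$ inversion relation (the $n=4$ case of Proposition~\ref{prop:nielseninverse}) and the three-term relation to bring the combination into the shape $\sum_i\big({-}\tfrac1a S_{4,2}(1-p_i) + \tfrac1b S_{4,2}(p_i)\big)$, and then invoke the weight-$6$ algebraic reduction of Proposition~\ref{prop:s42alg}. This is the step that genuinely uses that the $p_i$ are the roots of $x^a(1-x)^b = t$, and hence satisfy the algebraic $\Li_3$ functional equation of Section~\ref{sec:algfe} (and, through the proof of Proposition~\ref{prop:s42alg}, ultimately the $\Li_4$ one); it is the analogue here of the simultaneous $\Li_2$ and $\Li_4$ requirement flagged in the Preconsideration. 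After this reduction the $\otimes t$-part is again a pure $\Li_6$ identity among anharmonic values.

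The main obstacle is the coefficient bookkeeping. Each of the inversion, three-term and algebraic reductions introduces its own $\Li_6$ correction terms, and the crux of the argument is to verify that, after collecting everything, these assemble precisely into the prescribed coefficients of the $\Li_7$ (hence $\Li_6$) terms on the right. Because those coefficients depend on $a,b,c$ subject only to $a+b+c = 0$, I would organise the verification by passing to the polynomial invariants of Section~\ref{sec:spanningset} under the correspondence $S_{n,p}(z) \leftrightarrow \binom{N-2}{p-1}X^{n-1}Y^{p-1}$, turning each residual $\Li_6$ (and $S_{4,2}$) identity into an identity of homogeneous degree-$4$ polynomials in $X,Y$ with $a,b,c$ as parameters; this reduces the whole proposition to a finite, mechanical polynomial check, which is where I expect essentially all of the effort to lie.
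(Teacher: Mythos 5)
Your outline is exactly the paper's proof: the authors' own (two\nobreakdash-line) argument is to expand via the recursion \eqref{snprecursion}, rewrite the last tensor slot through \( 1-p_i = t^{1/b}p_i^{-a/b} \), kill the \( \otimes p_i \)-part with the three-term relation for \( S_{4,2} \) (Proposition \ref{prop:s42three}), and kill the \( \otimes t \)-part (after summing over the roots) with the weight-6 algebraic reduction (Proposition \ref{prop:s42alg}) together with inversion; your proposal is a faithful and correctly organised expansion of that, including the key observations that the three \( S_{4,2}(z_j) \) all acquire the same coefficient \( \tfrac1b \) in front of \( \otimes p_i \) and that the surviving \( \otimes t \)-part is a multiple of the combination in Proposition \ref{prop:s42alg}.

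One warning about the coefficient bookkeeping you defer to the end: carrying it out, the residual \( \Li_6\otimes p_i \) coefficients on the left are \( \tfrac{3a+b}{ab} \), \( \tfrac{3b+a}{b^2} \) and \( \tfrac{2c-b}{bc} \), which forces the coefficient of \( \Li_7(1-p_i^{-1}) \) on the right to be \( +\tfrac{2c-b}{c^2}=+\tfrac{3c+a}{c^2} \), i.e.\ the \emph{opposite} sign to the one printed in the statement. The case \( a=b=1 \), \( c=-2 \) confirms this independently of the whole machinery: there the two roots satisfy \( p_2=1-p_1 \), the left-hand side collapses to \( -\tfrac12\big(S_{5,2}(w)+S_{5,2}(w^{-1})\big)\modsh -\tfrac52\Li_7(w) \) with \( w=\tfrac{p_1-1}{p_1} \), while the right-hand side as printed gives \( \tfrac54\big(\Li_7(w)+\Li_7(w^{-1})\big)\modsh+\tfrac52\Li_7(w) \). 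So your method is the right one and does close the argument, but the verification lands on the sign-corrected identity (consistent with the sign pattern \( +\tfrac1a,-\tfrac1b,+\tfrac1c \) on the left-hand side), not on the statement as displayed.
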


\begin{Cor}
	We have the clean single-valued identity
	\begin{align*}
	& \sum_{i = 1}^{r} \frac{1}{a} \SCS_{5,2}\Big(\frac{1}{1-p_i(t)}\Big) - \frac{1}{b} \SCS_{5,2}(p_i(t)^{-1}) + \frac{1}{c} \SCS_{5,2}(1 - p_i(t)^{-1}) \\ - {}& \sum_{i=1}^{r} \frac{3a + b}{a^2} \LiCS_7\Big(\frac{1}{1-p_i(t)}\Big) - \frac{3 b + a}{b^2} \LiCS_7(p_i(t)^{-1}) - \frac{3c + a}{c^2} \LiCS_7(1 - p_i(t)^{-1}) \\
	& \quad \= \begin{cases}
		\displaystyle\frac{2 a}{c} \zeta(7) & \text{if $ b > 0 $\,,} \\[1ex]
		\displaystyle\frac{2(a^2b - a^2c - b^2c)}{abc} \zeta(7) & \text{if $ -a < b < 0 $\,,} \\[1ex]
		-\displaystyle\frac{2(a^2 + b^2)}{ab} \zeta(7) & \text{if $ b < -a  $\,.}
	\end{cases}
	\end{align*}
\end{Cor}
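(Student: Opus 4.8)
The plan is to follow the same two-step strategy used for the preceding corollaries to Propositions \ref{prop:s32alg} and \ref{prop:s42alg}. Since Proposition \ref{prop:s52alg} is an identity of mod-products symbols, the main result of \cite{Ch-Du-Ga} recalled above guarantees that the corresponding combination of clean single-valued functions $\SCS_{5,2}$ and $\LiCS_7$ agrees up to a single additive constant. Because every term has weight $7$ and the identity is rigid in $t$, this constant is a genuine number, necessarily a rational multiple of $\zeta(7)$ (all weight-$7$ single-valued constants built from $\zeta^{\sv}$ are). It therefore remains only to pin down this constant, which I would do by specialising to the degenerate limit $t \to 0$.

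Next I would analyse how the roots $p_i(t)$ of $x^a(1-x)^b = t$ collapse as $t \to 0$, exactly as in the two earlier corollaries, using $a > 0$ and the degree/product bookkeeping from Section \ref{sec:algfe}. For $b > 0$ one obtains $a$ roots tending to $0$ and $b$ roots tending to $1$; for $-a < b < 0$ all $a$ roots tend to $0$; and for $b < -a$ one obtains $a$ roots tending to $0$ together with $c = -(a+b)$ roots tending to $\infty$. The three cases in the statement correspond precisely to these three degeneration patterns.

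The substantive input is then the evaluation of $\SCS_{5,2}$ and $\LiCS_7$ at the three special points $0, 1, \infty$, since under the anharmonic arguments $\tfrac{1}{1-p}, p^{-1}, 1-p^{-1}$ a root $p \to 0$ sends these to $1, \infty, \infty$, a root $p \to 1$ to $\infty, 1, 0$, and a root $p \to \infty$ to $0, 0, 1$. I would read these off from the clean single-valued reflection and inversion identities stated after \eqref{clean:sn2}: one has $\LiCS_7(0) = \LiCS_7(\infty) = 0$ and $\LiCS_7(1) = \zeta^{\sv}(7) = 2\zeta(7)$, while for the Nielsen function $\SCS_{5,2}(0) = 0$ (the defining expression for $\SCS_{n,2}$ consists of terms vanishing at $z=0$ multiplied by powers of $\log|z|^2$) and $\SCS_{5,2}(\infty) = 2\zeta(7)$ (set $z=0$ in the inversion identity, using $\SCS_{6,1}=\LiCS_7$). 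Summing the resulting contributions over the degenerate roots in each regime, with the coefficients $\tfrac1a, -\tfrac1b, \tfrac1c$ on the $\SCS_{5,2}$ terms and $\tfrac{3a+b}{a^2}, -\tfrac{3b+a}{b^2}, -\tfrac{3c+a}{c^2}$ on the $\LiCS_7$ terms, then yields the constant in each case after simplification with $c = -(a+b)$.

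The main obstacle is the careful determination of the boundary value $\SCS_{5,2}(1)$ together with the bookkeeping of which of the three anharmonic arguments lands on $1$ in each limit. This is genuinely new relative to the $S_{3,2}$ and $S_{4,2}$ corollaries: there the relevant Nielsen arguments were $p, 1-p, 1-p^{-1}$, so the value at $1$ never contributed and only $\SCS_{n,2}(0)$ and $\SCS_{n,2}(\infty)$ were needed. Here the argument $\tfrac{1}{1-p} \to 1$ as $p \to 0$ (and $1-p^{-1}\to 1$ as $p\to\infty$) forces $\SCS_{5,2}(1)$ into the $b \le 0$ cases, where it appears with total coefficient $a\cdot\tfrac1a = 1$ when $b<0\le$ only the $0$-cluster contributes, and with coefficient $a\cdot\tfrac1a + c\cdot\tfrac1c = 2$ when $b < -a$. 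Keeping track of this value and its multiplicity is precisely where the three formulae differ, and I expect it to be both the delicate point of the argument and a useful internal consistency check on the stated constants.
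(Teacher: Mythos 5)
Your proposal follows exactly the paper's proof: lift Proposition \ref{prop:s52alg} to a clean single-valued identity up to an additive constant, then fix that constant by letting \( t \to 0 \) and tracking how the roots degenerate to \( 0 \), \( 1 \), \( \infty \) in the three regimes, using \( \LiCS_7(0)=\LiCS_7(\infty)=0 \), \( \LiCS_7(1)=2\zeta(7) \), \( \SCS_{5,2}(0)=0 \), \( \SCS_{5,2}(\infty)=2\zeta(7) \), together with the one value you leave implicit, \( \SCS_{5,2}(1)=\tfrac{1}{7}\binom{7}{2}\zeta^{\sv}(7)=6\zeta(7) \) from the clean reflection identity. (Only a side remark is off: the value at \( 1 \) already entered the \( S_{3,2} \) corollary via \( 1-p_i\to 1 \), where \( \SCS_{3,2}(1)=2\zeta(5) \) was used; it is genuinely absent only in the \( S_{4,2} \) case, where all three special values vanish.)
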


\begin{proof}
		Consider the limit \( t \to 0 \), and use \( \LiCS_7(0) = 0 \), \( \LiCS_7(1) = 2\zeta(7) \), \( \LiCS_7(\infty) = 0 \) and \( \SCS_{5,2}(0) = 0 \), \( \SCS_{5,2}(1) = 6\zeta(7) \), \( \SCS_{5,2}(\infty) = 2\zeta(7) \).
		
		If \( b > 0 \), we obtain roots \( p_i = 0 \) with multiplicity \( a \) and \( p_i = 1 \) with multiplicity \( b \), giving constant \( \frac{2 a}{c} \zeta(7) \).  If \( -a < b < 0 \), we obtain roots \( p_i = 0 \) with multiplicity \( a \), and the constant is \( \frac{2(a^2b - a^2c - b^2c)}{abc} \zeta(7) \).  Otherwise \( b < -a \) and we obtain roots \( p_i = 0 \) with multiplicity \( a \) and \( p_i = \infty \) with multiplicity \( -b-a \), giving the constant \( -\frac{2(a^2 + b^2)}{ab} \zeta(7) \).
\end{proof}

\begin{proof}[Proof of proposition]
	The strategy is exactly the same as in Propositions \ref{prop:s32alg} and \ref{prop:s42alg}.  Expand out using the recursive definition of the mod-products symbol, and we reduce to the algebraic functional equations in lower weight, plus the three-term for \( S_{4,2} \).
\end{proof}

\subsection{\texorpdfstring{\( \Li_{a_1} + \cdots + \Li_{a_n} \) functional equations}{Li\textunderscore{}a1 + ... + Li\textunderscore{}an}}
 \label{sec:limlin}
It is possible to somewhat artificially construct simultaneous 
functional equations for $\Li_2$ and $\Li_4$, and more generally 
simultaneous functional equations for $\Li_{a_1},\dots,\Li_{a_n}$
in the following manner.

Consider the function \(f(z) = \mu_1\Li_{a_1}(z) +\cdots+ \mu_n\Li_{a_n}(z)\), 
where $a_1<\dots<a_n$ are positive integers and $\mu_1,\dots,\mu_n$
are arbitrary non-zero numbers.
Then by the distribution relations of order $N$ we have
    \[
 	f^{(N)}(z) \ceq \sum_{y^N = z} f(y) \= \frac{\mu_1}{N^{a_1-1}} \Li_{a_1}(z) + \cdots + \frac{\mu_n}{N^{a_n-1}} \Li_{a_n}(z) \, .
    \]
Let $0<N_1<\cdots<N_n$ be positive integers, and denote 
$\underline{f}(z) = (f^{(N_1)}(z),\dots,f^{(N_n)}(z))^T$.
Then collecting the various distribution relations we get the equation
    \[
	\underline{f}(z) \= V_{a,N}\,\underline{\Li_{a}}(z)\,,
    \]
where $\underline{\Li_{a}}(z) \= (\mu_1\Li_{a_1}(z),\dots,\mu_n\Li_{a_n}(z))^T$,
and $V_{a,N}\=\begin{pmatrix}N_j^{1-a_i}\end{pmatrix}_{i,j=1}^n$
is a generalised Vandermonde matrix. Since $a_i$ and $N_j$ are distinct,
$\det(V_{a,N})$ is a non-zero multiple of an appropriate Schur 
polynomial $s_{\lambda}(N_1^{-1},\dots,N_n^{-1})$, which is positive 
since $N_j>0$ and $s_{\lambda}$ is a sum of monomials with 
positive coefficients. Therefore, $V_{a,N}$ is an invertible matrix and we have
    \[
	\underline{\Li_{a}}(z) \= V_{a,N}^{-1} \underline{f}(z)  \, .
    \]
The resulting combination \( \mu_j\Li_{a_j}(z) = \sum_k \alpha_k f(g_k(z)) \), 
where $\alpha_k$ and $g_k$ do not depend on $\mu_j$, then vanishes identically 
under any \( \Li_{a_j} \) functional equation \( \Lambda = \sum_\ell 
\gamma_\ell [x_\ell] \), so that
    \(
    \sum_{k,\ell} \alpha_k \gamma_\ell [g_k(x_\ell)]
    \)
is a functional equation for \( \Li_{a_1}, \ldots, \Li_{a_n} \) 
simultaneously. For the special case when $\Lambda$ is the distribution relation
    \[\Lambda \= \sum_{\ell=1}^k [z^M \zeta_k^\ell] - k^{1-a_j} [z^{Mk}] \,,\]
where $M = \operatorname{lcm}(N_1,\ldots, N_n)$, one obtains a rational 
\( \Li_{a_1} + \cdots + \Li_{a_n}  \) functional equation, but in general 
the functional equations constructed in this way will involve algebraic 
arguments.
  
\subsection{Bloch group identities.} \label{sec:blochgroupids}
Despite the scarcity of (rational) functional equations for $S_{5,2}$, we 
can still investigate experimentally, along the same lines as was done for 
the classical polylogarithms in~\cite{Za1}, whether 
combinations $S_{5,2}(\sum_j\nu_j[x_j])$ reduce to $\Li_7$ 
whenever $\sum_j\nu_j\{x_j\}_k=0$ for $k=2,4$.

Taking the algebraic identity $x^a(1-x)^b=t$ for \( a = 1, b = 2 \), and \( t = \frac{4}{27} \) leads to three roots \( p_i = \frac{1}{3}, \frac{1}{3}, \frac{4}{3} \).  Proposition \ref{prop:s52alg} then gives the following identity
\begin{equation}  \label{eq:s52of23units}
    \begin{split}
	&
	\SCS_{5,2}\bigg( \frac{2}{3}\bigg[{-}\frac{1}{2}\bigg]
	-\bigg[{-}\frac{1}{3}\bigg]
	-\frac{1}{3}\bigg[\frac{1}{4}\bigg]
	+\bigg[\frac{1}{3}\bigg]
	-2\bigg[\frac{2}{3}\bigg]
	-\frac{1}{2}\bigg[\frac{3}{4}\bigg] \bigg)
	\= 
	\\& \quad
	\LiCS_7\bigg({-}\frac{14}{9}\bigg[{-}\frac{1}{2}\bigg]
	+\frac{8 }{9}\bigg[\frac{1}{4}\bigg]
	-\frac{3}{2}\bigg[\frac{1}{3}\bigg]
	+\frac{7}{4}\bigg[\frac{3}{4}\bigg] \bigg)
	+\frac{10}{3}\zeta (7) \, .
    \end{split}
\end{equation}
Note that the arguments of $S_{5,2}$ are exceptional $\{2,3\}$-units,
i.e. they are numbers $z$ such that both $z$ and $1-z$ are of the 
from $\pm 2^k3^l$, $k,l\in\ZZ$. 
Looking at all possible combinations of non-trivial exceptional 
\( \{2,3\} \)-units in \( [-1, 1) \) that define elements lying both 
in $\mathcal{B}_2(\QQ)$ and $\mathcal{B}_4(\QQ)$ (in this case it is equivalent 
to their vanishing in the pre-Bloch groups $B_2(\QQ)$ and $B_4(\QQ)$), we 
find that they form a $5$-dimensional space, generated by
\begin{align*}
& \alpha_1 \= [-1] \, , \\&
\alpha_2 \= 
10 \bigg[{-}\frac{1}{2}\bigg]
+\bigg[{-}\frac{1}{8}\bigg]
-8 \bigg[\frac{1}{4}\bigg]
+22\bigg[\frac{1}{2}\bigg]
 \, , \\&
\alpha_3 \= 
6 \bigg[{-}\frac{1}{3}\bigg]
-4 \bigg[{-}\frac{1}{2}\bigg]
+2 \bigg[\frac{1}{4}\bigg]
-6 \bigg[\frac{1}{3}\bigg]
+12\bigg[\frac{2}{3}\bigg]
+3 \bigg[\frac{3}{4}\bigg]
  \, , \\&
\alpha_4 \= 
\bigg[{-}\frac{1}{8}\bigg]
-8\bigg[{-}\frac{1}{3}\bigg]
-14 \bigg[{-}\frac{1}{2}\bigg]
+\bigg[\frac{1}{9}\bigg]
-5\bigg[\frac{1}{4}\bigg]
-8 \bigg[\frac{1}{3}\bigg]
-2 \bigg[\frac{1}{2}\bigg]
 \, , \\&
\alpha_5 \= \:
\bigg[{-}\frac{1}{8}\bigg]
-9\bigg[{-}\frac{1}{3}\bigg]
\:+\:4\bigg[{-}\frac{1}{2}\bigg]
+\bigg[\frac{1}{9}\bigg]
-5\bigg[\frac{1}{4}\bigg]
-4 \bigg[\frac{1}{2}\bigg]
+9\bigg[\frac{3}{4}\bigg]
+\bigg[\frac{8}{9}\bigg]\, .
\end{align*}
In each of these cases we expect $S_{5,2}(\alpha_j)$ to reduce to $\Li_7$.
For \( \alpha_1 = [-1] \) we already gave the corresponding reduction for 
the analytic functions, the single-valued version of which is
    \[\SCS_{5,2}(-1) = -\frac{251}{64} \zeta(7) \, ,\]
while the combination given as the argument of $S_{5,2}$ in~\eqref{eq:s52of23units} corresponds to a 
multiple of $\alpha_3$. The remaining elements~$\alpha_2$,~$\alpha_4$, 
and~$\alpha_5$ appear to be a lot more difficult to reduce rigorously. However, 
in each case we can find a \emph{candidate} combination which works 
numerically to high precision (we have verified them for the single-valued
functions to 10,000 decimal places using PARI/GP~\cite{PARI2}).
For instance, for \( \alpha_2 \) we have

\begin{alignat*}
{7}
 \mathrlap{\SCS_{5,2}\big(
 10 \big[{-}\tfrac{1}{2}\big]
+\big[{-}\tfrac{1}{8}\big]
-8 \big[\tfrac{1}{4}\big]
+22 \big[\tfrac{1}{2}\big]
\big)
 \overset{?}{=}}
 \\ &&\LiCS_7\big(&&
\tfrac{1}{1105}\big[{-}\tfrac{2048}{2187}\big]&&
-\tfrac{77443}{195}\big[{-}\tfrac{3}{4}\big]&&
+\tfrac{23501}{663}\big[{-}\tfrac{2}{3}\big]&&
-\tfrac{32842}{9945}\big[{-}\tfrac{9}{16}\big]&&
-\tfrac{1049696}{255}\big[{-}\tfrac{1}{2}\big]
\\[-0.5ex]&&&&
+\tfrac{217}{34}\big[{-}\tfrac{4}{9}\big]&&
+\tfrac{217}{765}\big[{-}\tfrac{27}{64}\big]&&
-\tfrac{26449}{2210}\big[{-}\tfrac{3}{8}\big]&&
+\tfrac{16321}{9945}\big[{-}\tfrac{1}{3}\big]&&
-\tfrac{2420}{1989}\big[{-}\tfrac{8}{27}\big]
\\[-0.5ex]&& &&
-\tfrac{51647}{884}\big[{-}\tfrac{1}{4}\big]&&
+\tfrac{2648}{221}\big[{-}\tfrac{2}{9}\big]&&
-\tfrac{3140}{663}\big[{-}\tfrac{1}{6}\big]&&
-\tfrac{18}{1105}\big[{-}\tfrac{32}{243}\big]&&
+\tfrac{3932}{1105}\big[{-}\tfrac{1}{8}\big]
\\[-0.5ex]&&&&
-\tfrac{21139}{9945}\big[{-}\tfrac{1}{9}\big]&&
-\tfrac{307}{1530}\big[{-}\tfrac{3}{32}\big]&&
-\tfrac{217}{51}\big[{-}\tfrac{1}{12}\big]&&
+\tfrac{83}{6630}\big[{-}\tfrac{27}{512}\big]&&
-\tfrac{3167}{3978}\big[{-}\tfrac{1}{24}\big]
\\[-0.5ex]&&&&
+\tfrac{9359}{9945}\big[{-}\tfrac{1}{27}\big]&&
-\tfrac{88}{3315}\big[{-}\tfrac{1}{32}\big]&&
+\tfrac{77}{3978}\big[{-}\tfrac{1}{48}\big]&&
+\tfrac{328}{663}\big[{-}\tfrac{1}{54}\big]&&
+\tfrac{217}{3060}\big[{-}\tfrac{1}{64}\big]
\\[-0.5ex]&&&&
-\tfrac{61}{6630}\big[{-}\tfrac{2}{243}\big]&&
+\tfrac{31}{1020}\big[{-}\tfrac{1}{324}\big]&&
+\tfrac{12}{1105}\big[{-}\tfrac{1}{384}\big]&&
-\tfrac{7}{2210}\big[{-}\tfrac{1}{4374}\big]&&
-\tfrac{29}{1105}\big[\tfrac{1}{243}\big]
\\[-0.5ex]&&&&
+\tfrac{23}{2210}\big[\tfrac{3}{128}\big]&&
-\tfrac{217}{612}\big[\tfrac{1}{27}\big]&&
+\tfrac{294}{221}\big[\tfrac{2}{27}\big]&&
-\tfrac{5268}{1105}\big[\tfrac{1}{12}\big]&&
-\tfrac{84341}{19890}\big[\tfrac{1}{8}\big]
\\[-0.5ex]&&&&
+\tfrac{48827}{1989}\big[\tfrac{1}{6}\big]&&
-\tfrac{217}{102}\big[\tfrac{3}{16}\big]&&
+\tfrac{4895}{1989}\big[\tfrac{2}{9}\big]&&
-\tfrac{985027}{39780}\big[\tfrac{1}{3}\big]&&
+\tfrac{109586}{9945}\big[\tfrac{3}{8}\big]
\\[-0.5ex]&&&&
+\tfrac{1253}{13260}\big[\tfrac{32}{81}\big]&&
-\tfrac{1049557}{255}\big[\tfrac{1}{2}\big]&&
-\tfrac{1174}{3315}\big[\tfrac{16}{27}\big]&&
+\tfrac{67273}{663}\big[\tfrac{2}{3}\big]&&
-\tfrac{4447459}{9945}\big[\tfrac{3}{4}\big]
\\[-0.5ex]&&&&
+\tfrac{7859}{6630}\big[\tfrac{27}{32}\big]&&
+\tfrac{643}{306}\big[\tfrac{8}{9}\big]&&
-\tfrac{31}{1020}\big[\tfrac{243}{256}\big] \mathrlap{\big)}&&
+\tfrac{4241}{1105}\zeta (7) \, .
\end{alignat*}

\begin{Rem}
	Notice that although each $x_j\in\QQ$ that appears in this combination is 
    a $\{2,3\}$-unit, we also have primes $5$, $7$, $11$, and $13$ appearing 
    in factorisations of $1-x_j$.
\end{Rem}
 
\section{Identities in weight 8} 
In this section, we depth reduce \( S_{4,4}(z) \)  (Proposition \ref{prop:s44reduce}) and we reduce  \( S_{5,3} \) evaluated on the same family of algebraic \( \Li_2 \) functional equations (Proposition \ref{prop:s53alg}) as for \( S_{3,2} \). A special case thereof allows to reduce \( S_{5,3}(-1) \) to \( S_{6,2}(-1) \) and \( S_{6,2}(\frac{1}{2}) \), modulo polylogs and products, and subsequently to match the coproduct for \( S_{6,2}(-1) \) and even arrive at a tentative evaluation (Proposition \ref{prop:s62minusone} and Appendix \ref{app:reduction}). 

\smallskip
\paragraph{\em Preconsiderations:}
Since \( \lfloor (8 + 1)/3 \rfloor = 3 \), Theorem \ref{thm:nielsendepth} shows that we can  at best reduce to depth~3, meaning \( S_{5,3}(z) \) is a new more complicated function in weight 8.  On computing the 2-part of the motivic cobrackets, we find 

\begin{align*}
	\delta S_{6,2}(z) \={} & \{1\}_3 \wedge \{z\}_5 - \{z\}_3 \wedge \{1\}_5 \,, \\[1ex]
	\delta S_{5,3}(z) \={} & \{1\}_3 \wedge \big( \{z\}_5 + S_{3,2}(z) \big) + \big( \{1-z\}_3 - 2 \{z\}_3 \big) \wedge \{1\}_5 \,, \\[1ex]
	\delta S_{4,4}(z) \={} & \{1\}_3 \wedge \left( - \{z\}_5 - \{1-z\}_5  - \{1-z^{-1} \}_5 + 2 S_{3,2}(z) \right)  \\
		& + 2\left( \{1-z\}_3 - \{z\}_3 \right) \wedge \{1\}_5 \,.
\end{align*}

We observe that \( S_{5,3}(z) \) cannot reduce to \( S_{6,2} \) motivically, even with more complicated arguments, since it contains a single Nielsen polylogarithm in its coproduct, which can never be matched by \( S_{6,2} \) alone.  Instead, we expect  \( S_{5,3}(z) \) to behave like \( \Li_2 \) modulo \( S_{6,2} \) and \( \Li_8 \), as explained in Remark \ref{rem:nielsendepthbehaviour}.

\subsection{\texorpdfstring{Depth reduction of \( S_{4,3} \)}{Depth reduction of \textunderscore{}{4,3}}}

We know that \( S_{4,4}(z) \) reduces to \( S_{5,3} \), so we can attempt to do this by explicitly by killing the \( S_{3,2} \) factor in the motivic cobracket.

\begin{Prop}\label{prop:s44reduce}
	The following reduction expresses \( S_{4,4} \) in terms of lower depth Nielsen polylogarithms
	\begin{align*}
	S_{4,4}(z) \= {} &  2
	S_{5,3}(z)-S_{6,2}(1-z)-3 S_{6,2}(z)-S_{6,2}\Big(\frac{z}{z-1}\Big) \\
	& + 2 \Li_8(1-z)+4 \Li_8(z)+4 \Li_8\Big(\frac{z}{z-1}\Big) \Mod{products} \,.
	\end{align*}
	
	\begin{proof}
		The polynomial invariant of the difference of the left hand side and the right hand side is
		\begin{align*}
		20 X^3 Y^3 - \Big( 30 X^4 Y^2 + & 6 X Y^5  -18 X^5 Y -6 (X - Y)^5 Y \\
		& -2 Y^6 + 4 X^6  -4 (X - Y)^6 \Big) \= 0 \, . \qedhere
		\end{align*}
	\end{proof}
\end{Prop}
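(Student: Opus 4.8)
The plan is to establish the identity entirely on the level of the mod-products symbol, exploiting the faithful dictionary between weight-\( N \) Nielsen polylogarithms and binary forms of degree \( N-2 \) set up in Section~\ref{sec:spanningset}. Under \( z \leftrightarrow X \), \( 1-z \leftrightarrow Y \) one has \( S_{n,p}(z) \mapsto \binom{N-2}{p-1} X^{n-1} Y^{p-1} \) and \( S_{n,p}(1-z) \mapsto -\binom{N-2}{p-1} Y^{n-1} X^{p-1} \); since \( \frac{z}{z-1} = \frac{1}{1-z^{-1}} \) with \( 1 - \frac{z}{z-1} = \frac{1}{1-z} \), so that \( \log(1-\frac{z}{z-1}) \leftrightarrow -Y \) and \( \log(\frac{z}{z-1}) \leftrightarrow X-Y \), tracking the sign of the \( \Symbsh(\Li_2) \) wedge factor gives
\[
	S_{n,p}\Big(\frac{z}{z-1}\Big) \:\mapsto\: (-1)^p \binom{N-2}{p-1} (X-Y)^{n-1} Y^{p-1} \, .
\]
Here \( N = 8 \), so I work with binary sextics. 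The point of Theorem~\ref{thm:nielsendepth} is precisely that a polynomial identity among these images is equivalent to the corresponding mod-products symbol identity, and that any such identity is a formal consequence of the reflection and inversion relations (Propositions~\ref{prop:nielsenreflection} and~\ref{prop:nielseninverse}). In particular nothing beyond the symbol — no product terms, no integration constant — needs to be tracked for the stated ``modulo products'' reduction, exactly as in the proofs of Propositions~\ref{prop:s33reduce} and~\ref{prop:s42three}. (Should one want the full analytic identity, one could instead differentiate, reduce to a weight-\(7\) relation, and fix the constant at \( z\to 0 \); but this is not needed.)

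First I would record the image of each term: \( S_{4,4}(z) \mapsto 20 X^3 Y^3 \), \( S_{5,3}(z) \mapsto 15 X^4 Y^2 \), \( S_{6,2}(z) \mapsto 6 X^5 Y \), \( S_{6,2}(1-z) \mapsto -6 X Y^5 \), \( S_{6,2}(\tfrac{z}{z-1}) \mapsto 6 Y (X-Y)^5 \), \( \Li_8(z) \mapsto X^6 \), \( \Li_8(1-z) \mapsto -Y^6 \), and \( \Li_8(\tfrac{z}{z-1}) \mapsto -(X-Y)^6 \). Substituting these into the difference of the two sides, with the coefficients \( (2,-1,-3,-1,2,4,4) \) of the right-hand side, yields the binary sextic
\[
	20 X^3 Y^3 - \Big( 30 X^4 Y^2 + 6 X Y^5 - 18 X^5 Y - 6 (X-Y)^5 Y - 2 Y^6 + 4 X^6 - 4 (X-Y)^6 \Big) \, ,
\]
and the remaining step is the routine expansion of \( (X-Y)^5 \) and \( (X-Y)^6 \) followed by collecting the seven monomials \( X^6, X^5 Y, \ldots, Y^6 \); every coefficient cancels, the form vanishes identically, and the proof is complete.

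The genuinely substantive point is the provenance of those coefficients. They are forced: the space of weight-\( 8 \) Nielsen mod-products symbols is seven-dimensional, matching both the space of binary sextics and the seven-element basis of Theorem~\ref{thm:nielsendepth} (here \( d = \lfloor 9/3 \rfloor = 3 \), so \( \Li_8 \), \( S_{6,2} \) and the single depth-\( 3 \) function \( S_{5,3} \) already span). Thus \( S_{4,4}(z) \), whose image is the middle monomial \( 20 X^3 Y^3 \), is a \emph{unique} combination of the basis elements, obtained by solving one \( 7\times 7 \) linear system; its overall shape is predicted in advance by the cobracket computation of the Preconsiderations, where the coefficient \( 2 \) of \( S_{5,3}(z) \) is exactly what cancels the lone Nielsen term \( 2 S_{3,2}(z) \) in \( \delta S_{4,4}(z) \), after which only classical \( \Li \)-terms remain to be matched. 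I expect the main pitfall to be the sign in the image of \( S_{n,p}(\tfrac{z}{z-1}) \), which is the one place where a slip would silently break the sextic cancellation; I would guard against it by re-deriving that image from inversion together with the \( \Li_2 \)-factor wedge-sign bookkeeping, and by cross-checking it against the already-verified three-term reduction of Proposition~\ref{prop:s42three}.
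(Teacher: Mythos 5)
Your proposal is correct and follows essentially the same route as the paper: the proof there consists precisely of computing the polynomial invariant of the difference of the two sides, namely \( 20 X^3 Y^3 - \big( 30 X^4 Y^2 + 6 X Y^5 - 18 X^5 Y - 6 (X-Y)^5 Y - 2 Y^6 + 4 X^6 - 4 (X-Y)^6 \big) \), and observing that it vanishes identically. Your term-by-term images (including the sign \( (-1)^p \) for the argument \( \tfrac{z}{z-1} \)) agree with the paper's conventions, and the extra discussion of why the seven coefficients are forced is consistent with Theorem \ref{thm:nielsendepth} but not needed for the verification.
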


\subsection{\texorpdfstring{On the special values of \( S_{6,2}(z) \) and  \( S_{5,3}(z) \) at \( z=-1 \) and \(  z=\frac{1}{2} \)}{On the special values of S\textunderscore{}{6,2}(z) and   S\textunderscore{}{5,3}(z) at z = -1 and z = 1/2}}
At \( z = \tfrac{1}{2} \) or \( z = -1 \) we compute the coproduct as
\begin{alignat*}{4}
	\delta S_{6,2}(-1) &= -\frac{3}{16} \{1\}_3 \wedge \{1\}_5 \,, \quad\quad &
	\delta S_{6,2}\Big(\frac{1}{2}\Big) &= -\frac{15}{8} \{1\}_3 \wedge \{1\}_5 + \{1\}_3 \wedge \Big\{\frac{1}{2}\Big\}_5 \,, \\
	\delta S_{5,3}(-1) &= -\frac{15}{32} \{1\}_3 \wedge \{1\}_5 \,, &
	\delta S_{5,3}\Big(\frac{1}{2}\Big) &= -\frac{7}{8} \{1\}_3 \wedge \{1\}_5 + 2 \{1\}_3 \wedge \Big\{\frac{1}{2}\Big\}_5 \,.
\end{alignat*}

In order to match coproduct terms, we are thus led to investigating the following linear combination (on the left) and 
we find that it reduces to Riemann zeta values.

\begin{Prop}\label{prop:s62s53minusone}
 We have
    \[
	\frac{5}{2} S_{6,2}(-1) - S_{5,3}(-1) \= 
	-\frac{917}{768} \zeta(8) 
	+\frac{1}{2}\zeta (3) \zeta (5)
	+\frac{1}{4}\zeta(2)\zeta (3)^2
	\, .
    \]
 
 \begin{proof}
 This follows from the MZV Data Mine \cite{mzvDM}, since each $S_{n,p}(-1)$ already has the form of an alternating MZV. 
 \end{proof}
 \end{Prop}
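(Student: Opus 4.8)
The plan is to convert both Nielsen values into alternating multiple zeta values and then reduce them inside the finite-dimensional $\QQ$-algebra of weight $8$ level-$2$ Euler sums. Using the multiple-polylogarithm presentation \eqref{eq:nielsenasLi}, one has
\[
	S_{6,2}(-1) \= \Li_{1,7}(1,-1) \= \zeta(1,\overline{7})\,, \qquad
	S_{5,3}(-1) \= \Li_{1,1,6}(1,1,-1) \= \zeta(1,1,\overline{6})\,,
\]
both of weight $8$ and of depth $2$ and $3$ respectively. It is worth noting first why these cannot be handled by the two-term relations alone: specialising the inversion identity (Proposition \ref{prop:nielseninverse}) at $z=-1$, where $1/z=-1$ and $\log(-1/z)=0$, produces a self-relation in which $S_{n,2}(-1)$ occurs on both sides with coefficients $1$ and $(-1)^n$. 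For odd $n$ this solves for $S_{n,2}(-1)$ in terms of zeta values, which is exactly how $S_{5,2}(-1)$ was evaluated above; but for the even value $n=6$ the relation degenerates to a consistency check and does \emph{not} reduce $S_{6,2}(-1)$ individually. This is the analytic shadow of $\delta S_{6,2}(-1)\neq 0$.

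The coefficient $\tfrac52$ in the proposition is dictated by the cobracket. Using the $2$-part values recorded immediately before the statement,
\[
	\delta\Big(\tfrac{5}{2} S_{6,2}(-1) - S_{5,3}(-1)\Big)
	\= \tfrac52\Big({-}\tfrac{3}{16}\Big)\{1\}_3\wedge\{1\}_5 + \tfrac{15}{32}\,\{1\}_3\wedge\{1\}_5 \= 0\,,
\]
so the combination is primitive for the motivic cobracket. Structurally this is the reason that, after reducing to a basis, all genuinely irreducible (depth $\geq 2$) contributions cancel and only products of Riemann zeta values can survive. In weight $8$ the admissible such monomials are spanned by $\zeta(8)$, $\zeta(3)\zeta(5)$ and $\zeta(2)\zeta(3)^2$, precisely the three terms appearing on the right-hand side.

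To make this rigorous I would reduce $\zeta(1,\overline{7})$ and $\zeta(1,1,\overline{6})$ to a fixed basis of weight $8$ alternating MZVs using the regularised double shuffle relations: expressing each as a nested sum and as an iterated integral $(-1)^p I(0;\dots;1)$, the stuffle and shuffle products of lower-weight (alternating) zeta values generate enough linear relations to rewrite each in the chosen basis. This is exactly the tabulated output of the MZV Data Mine \cite{mzvDM}, which is why the one-line proof is legitimate. One then forms the combination and observes the cancellation of every non-product generator.

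The three rational coefficients can be pinned down as in the $S_{4,2}(-1)$ analysis: computing the reduced coproduct $\Delta'$ of both sides fixes the $\zeta(3)\zeta(5)$ and $\zeta(2)\zeta(3)^2$ coefficients, since these are the only weight-$8$ products with nontrivial $\Delta'$, while the remaining $\Delta'$-trivial $\zeta(8)$ coefficient is fixed by a single high-precision numerical evaluation. The main obstacle throughout is the explicit reduction of the depth-$3$ sum $\zeta(1,1,\overline{6})$ to the standard basis — a lengthy bookkeeping exercise in the double shuffle relations that is precisely what the data mine automates, and the only genuinely non-routine ingredient in the argument.
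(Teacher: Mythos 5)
Your proof takes essentially the same route as the paper: both $S_{6,2}(-1)=\zeta(1,\overline{7})$ and $S_{5,3}(-1)=\zeta(1,1,\overline{6})$ are alternating MZVs of weight~$8$, and the identity is exactly the output of reducing these to the standard basis of level-$2$ Euler sums as tabulated in the MZV Data Mine, which is all the paper's one-line proof invokes. Your additional remarks (the cobracket computation explaining the coefficient $\tfrac52$, the degeneration of inversion at even $n$, and the coproduct-plus-numerics way of pinning coefficients) are correct but supplementary to that core argument.
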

 
A reduction of \( S_{5,3}(\tfrac{1}{2}) \) to \( S_{6,2}(\tfrac{1}{2}) \), \( 
S_{6,2}(-1) \), polylogs and products also exists, and  follows from the 
reduction of the reflection identity \( S_{5,3}(z) + S_{5,3}(1-z) \) in 
Proposition \ref{prop:s53ref} below.  However we should not expect a reduction 
of \( S_{6,2}(\tfrac{1}{2}) \) to anything of lower depth, since the cobracket 
contains the factor \( \{\tfrac{1}{2}\}_5 \neq 0 \).

\subsection{\texorpdfstring{Strategy for evaluating \( S_{6,2}(-1) \)}{Strategy for evaluating S\textunderscore{}\{6,2\}}}
\label{sec:s62minusone}
	Since \( \delta \zeta(3,5) = -5 \zeta(3) \wedge \zeta(5) = -5 \{1\}_3 
	\wedge \{1\}_5 \) it should be possible to reduce \( S_{6,2}(-1) \) and~\( 
	S_{5,3}(-1) \) individually to \( \Li_8 \) and products, if we allow also the more 
	familiar (conjecturally irreducible) constant~\( \zeta(3,5) \).
    
	More precisely, the following combination, with trivial coboundary, should 
	be expressible in terms of classical polylgoarithms and products of lower 
	weight terms
	\[
		S_{6,2}(-1)  - \frac{3}{80} \zeta(3,5) \overset{?}{\=} 0 \Mod{$\Li_8$, products} \, .
	\]
	However, such a reduction is likely to be \emph{much} more complicated than 
	the corresponding reduction for \( S_{4,2}(-1) \).  
	The complicated part of the \( S_{4,2}(-1) \) reduction stems from requiring terms \( \sum_j \alpha_j \Li_6(x_j) \) such that the \( (5,1) \)-part of their coproduct gives
	\[
		\sum\nolimits_j \alpha_j \Li_5(x_j) \otimes \log(x_j) \= \zeta(5) \otimes \log(2) \, .
	\]
	For weight 6, this was already possible using only arguments \( \pm 2^j \), since one has the identity \cite[p. 419]{Za1}
	\[
		 \LiZS_5\Big({-}\frac{1}{8}\Big) - 162 \LiZS_5\Big({-}\frac{1}{2}\Big) - 126 \LiZS_5\Big(\frac{1}{2}\Big) \= \frac{403}{16} \zeta(5) \, .
	\]
	
	To match the \( \zeta(7) \otimes \log(2) \) term in
	\[
		\Delta' S_{6,2}(-1) \= -\frac{15}{16} \zeta (3)\otimes \zeta (5)-\frac{3}{4}\zeta (5)\otimes \zeta (3)-\frac{127}{64} \zeta(7) \otimes \log (2)
	\]
	one should try to find a \( \Li_8 \) combination \( \sum \alpha_j 
	\Li_8(x_j) \) such that the \( (7,1) \)-part of their coproduct simplifies 
	to \( \zeta(7) \otimes \log(2) \).  Unfortunately, the simplest such \( 
	\Li_7 \) combination which gives a non-zero multiple of~\( \zeta(7) \) 
	already involves all the 29 exceptional \( \{2,3\} \)-units \cite[p. 
	420]{Za1}.  In fact we simultaneously require, for \( \nu_p(x_j) \) the 
	\(p\)-adic valuation of \( x_j \), that
	\begin{align*}
		&\sum\nolimits_j \alpha_j \LiZS_7(x_j) \nu_2(x_j) \:\in\: \zeta(7) 
		\mathbb{Q}^\times \, ,  \\
		&\sum\nolimits_j \alpha_j \LiZS_7(x_j) \nu_p(x_j) \= 0 \, , \quad 
		\text{\( p > 2 \)} \, ,
	\end{align*}
	in order to match \( \zeta(7) \otimes \log(2) \) in the coproduct, and to avoid generating extraneous terms \( \zeta(7) \otimes \log(p) \), \( p > 2 \).
	
	To find such a combination, we can slightly adapt the procedure from 
	\cite{Za1} for inductively computing elements in the Bloch groups \( 
	\mathcal{B}_n(F) \).  Take a set of elements \( X = \{ x_j \} \), each \( 
	x_j \) of the form \( \pm p_1^{k_1} \cdots p_\ell^{k_\ell} \).  Firstly, we 
	find the combinations \( \sum\nolimits_j n_j [x_j] \) in \( \ker \beta_8 
	\).  Here $\beta_m\colon \ZZ[F]\to\Sym^{m-2}(F^{\times}_{\QQ})\otimes 
	\bigwedge^2(F^{\times}_{\QQ})$ is as in~\cite{Za1} given by
    $[x]\mapsto [x]^{m-2}\otimes ([x]\wedge [1-x])$. 
    We then can impose the conditions
	\[
		\LiZS_k \Big( \sum\nolimits_j n_j \nu_{p_1}(x_j)^{\mu_1} \cdots \nu_{p_\ell}(x_j)^{\mu_\ell} [x_j] \Big) = 0 \,,
	\]
	for \( \mu_1 + \cdots + \mu_\ell = 8-k \), with \( k = 3, 5 \), to obtain combinations which give \( 0 \cdot \zeta(3) \) and \( 0 \cdot \zeta(5) \) under \( \LiZS_3 \) and \( \LiZS_5  \) respectively.  Assuming that \( p_1 = 2 \), we only need to impose the conditions
	\[
			\LiZS_7 \Big( \sum\nolimits_j n_j \nu_{p_i}(x_j) [x_j] \Big) = 0 \,,
	\]
	for \( i = 2, \ldots, \ell \), and then the combination \( \Lambda = 
	\sum\nolimits_j n_j [x_j] \) has the property we desire.  The same 
	observation as in \cite{Za1} shows that it is possible to satisfy these 
	conditions by taking \( X = X(S) \) to be some set of \( S \)-units, for a 
	sufficiently large set of primes \( S \).  Specifically, the number of 
	conditions imposed grows polynomially in the size of \( S \), but the 
	Erd\H{o}s-Stewart-Tijdeman Theorem (cf.~\cite{Za1}, p.425) shows that the size of \( X(S) \) grows 
	exponentially in the size of \( S \). \medskip

	In the case where \( x_j = \pm 2^a 3^b \), and \( 1-x_j \) contains only factors \( 2, 3, 5, \ldots, 23 \), (the original \( p = 2 \) , plus \( q = 7 \) new extra factors) we are in fact guaranteed to find such a solution.  The set of such \( x_j \) in \( (-1, 1) \), excluding squares, consists of 75 elements.  In weight \( w = 8 \), to be \( \ker \beta_8 \) we must impose
	\[
		63 = q \binom{w  + p - 2}{p-1} + \binom{w + p - 1}{p-1} - p
	\]
	conditions.  To force \( \LiZS_3 \) and \( \LiZS_5 \) images to be 0, we must impose a further
	\[
		10 = 6 + 4 = \sum_{k \in \{3, 5\}} \binom{w - k + p -1}{p-1} \, ,
	\]
	conditions.  Finally, we have only \( 1 = p - 1 \) more condition to force for the desired behaviour for the \( \LiZS_7 \) image.  In total we have 75 elements, and only 74 conditions, so the linear space of such combinations is (at least) 1-dimensional.
	
	After performing the linear algebra, we find exactly one combination of 60 of these elements (the full expression is given in Appendix \ref{app:reduction})
	\[
		\Lambda \ceq 50\,508\,755\,462\,288\,597\,796 \bigg[-\frac{2048}{2187} \bigg] + \cdots + 2\,651\,619\,475\,018\,716\,827\,904 \bigg[ \frac{243}{256} \bigg] \,,
	\]
	which satisfies
	\begin{align*}
		(\LiZS_7 \cdot \nu_2) (\Lambda) & \overset{?}{\=} -175\,442\,386\,671\,378\,179\,202\,538\,515 \zeta(7) \,, \\
		(\LiZS_7 \cdot \nu_3) (\Lambda) & \overset{?}{\=} 0 \,.
	\end{align*}
	Here we write \( (\LiZS_7 \cdot \nu_p) (x) \ceq \LiZS_7(x) \nu_p(x) \), and extend by linearity to formal linear combinations as usual.  \medskip
	
	With this combination, we can match the \( \log(2) \otimes \zeta(7) \) term 
	in \( \Delta' S_{6,2}(-1) \), and we obtain a candidate reduction of the 
	form
	
	\begin{Prop} \label{prop:s62minusone} We have the following conjectural evaluation of \( S_{6,2}(-1) \)
	\begin{align*}
		S_{6,2}(-1) \overset{?}{\=} 
		& -\frac{127}{64} \Li_8\Big( -(175\,442\,386\,671\,378\,179\,202\,538\,515)^{-1} \Lambda \Big) \\
		& + \frac{3}{80} \zeta(3,5)+\frac{15}{16} \zeta (3) \zeta (5)+\frac{127}{64} \log (2) \zeta (7) \\
		& + \sum_{ 2k + a + b = 8 }\lambda_{2k,a,b} \zeta(2k) \log^a(2) \log^b(3)
	\end{align*}
	for some \( \lambda_{2k,a,b} \in \QQ \) which come from the terms in 
	coproduct of \( \Li_8(\lambda) \) which arise from the product terms in the 
	analytic identities for \( (\Li_7 \cdot \nu_2)(\Lambda) \) and \( (\Li_7 
	\cdot \nu_3)(\Lambda) \), and their lower weight analogues.
	
	The full candidate for this reduction is given in Appendix \ref{app:reduction}, and has been verified to 20,000 decimal places in PARI/GP \cite{PARI2}.
\end{Prop}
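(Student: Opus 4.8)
The statement carries an $\overset{?}{=}$, so the plan is not a fully rigorous proof but rather to pin down every coefficient by matching the (conjectural motivic) reduced coproduct $\Delta'$ of the two sides, leaving a single coproduct-invariant parameter to be fixed numerically, and then to confirm the whole identity to very high precision. The input is the reduced coproduct computed in Section \ref{sec:s62minusone},
\[
\Delta' S_{6,2}(-1) = -\frac{15}{16}\zeta(3)\otimes\zeta(5) - \frac{3}{4}\zeta(5)\otimes\zeta(3) - \frac{127}{64}\zeta(7)\otimes\log(2),
\]
and the goal is to exhibit a combination of $\Li_8$-values, the double zeta $\zeta(3,5)$, and weight-$8$ products realising exactly this coproduct.

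First I would match the depth-two part, supported on $\zeta(3)\otimes\zeta(5)$ and $\zeta(5)\otimes\zeta(3)$, using $\zeta(3,5)$ together with the product $\zeta(3)\zeta(5)$. The antisymmetric (cobracket) component fixes the $\zeta(3,5)$ coefficient: from $\delta\zeta(3,5) = -5\{1\}_3\wedge\{1\}_5$ and $\delta S_{6,2}(-1) = -\frac{3}{16}\{1\}_3\wedge\{1\}_5$ this coefficient must equal $\frac{3}{80}$, exactly as in the statement. The remaining symmetric part of the depth-two coproduct, together with the symmetric part of $\Delta'\zeta(3,5)$, is then absorbed by $\zeta(3)\zeta(5)$, whose reduced coproduct is $\zeta(3)\otimes\zeta(5)+\zeta(5)\otimes\zeta(3)$; solving the resulting linear relation fixes the coefficient $\frac{15}{16}$.

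The hard part, where essentially all the work lies, is matching the residual term $-\frac{127}{64}\zeta(7)\otimes\log(2)$. This forces the existence of a rational combination $\Lambda = \sum_j n_j[x_j]$ whose $\Li_8$-coproduct has $(7,1)$-part a nonzero multiple of $\zeta(7)\otimes\log(2)$, with no spurious $\zeta(7)\otimes\log(p)$ contributions for primes $p>2$. Following the adapted Zagier construction of Section \ref{sec:s62minusone}, I would take $X$ to be the $75$ elements $x_j = \pm 2^a 3^b$ in $(-1,1)$, excluding squares, for which $1-x_j$ involves only the primes up to $23$, i.e.\ the two primes $2,3$ dividing $x_j$ together with $q=7$ further primes. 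I would then impose the $63$ conditions cutting out $\ker\beta_8$, the $10$ conditions forcing the valuation-weighted $\LiZS_3$ and $\LiZS_5$ images to vanish, and one further condition killing the $\nu_3$-weighted $\LiZS_7$ image. This is $74$ linear conditions on a $75$-dimensional space, so a nonzero solution $\Lambda$ must exist, the Erd\H{o}s--Stewart--Tijdeman theorem guaranteeing that the supply of $S$-units outgrows the polynomially many constraints. After normalising $\Lambda$ so that $(\LiZS_7\cdot\nu_2)(\Lambda)$ is the prescribed multiple of $\zeta(7)$, matching $\zeta(7)\otimes\log(2)$ fixes the coefficient $-\frac{127}{64}$ of the $\Li_8(\Lambda)$-term.

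It then remains to determine the product coefficients $\lambda_{2k,a,b}$ and the $\zeta(8)$ term. The former are pinned down by matching the lower weight parts of $\Delta'$: they arise purely from the product terms in the analytic evaluations of $(\Li_7\cdot\nu_2)(\Lambda)$ and $(\Li_7\cdot\nu_3)(\Lambda)$ and their weight-lowered analogues, exactly as in the $S_{4,2}(-1)$ reduction in \eqref{eqn:s42m1reduction}. The coefficient of $\zeta(8)$ is invisible to $\Delta'$, since $\zeta(8)$ is primitive with trivial reduced coproduct, so it can only be obtained numerically; I would fix it, and simultaneously confirm the actual existence of the reduction (which the coproduct argument by itself cannot guarantee), by a high-precision evaluation, the candidate having been verified to $20{,}000$ decimal places. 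The genuine obstacle throughout is the construction of $\Lambda$: one must work over a sufficiently rich set of $S$-units for all the valuation constraints to be simultaneously solvable, and the explicit solution involves $60$ of the $75$ units with enormous integer coefficients, reflecting the arithmetic complexity already flagged in the text.
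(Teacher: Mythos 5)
Your proposal follows essentially the same route as the paper: fix the $\zeta(3,5)$ coefficient as $\tfrac{3}{80}$ from the cobracket, absorb the symmetric depth-two part into $\zeta(3)\zeta(5)$, construct $\Lambda$ from the $75$ exceptional $\{2,3\}$-units via the $74$ linear conditions (kernel of $\beta_8$, vanishing of the valuation-weighted $\LiZS_3$, $\LiZS_5$ images, and the $\nu_3$-weighted $\LiZS_7$ image) so that the $(7,1)$-part of the coproduct matches $-\tfrac{127}{64}\zeta(7)\otimes\log(2)$, read off the product coefficients from the coproduct, and fix the coproduct-invisible $\zeta(8)$ coefficient numerically. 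This is exactly the paper's strategy, so no further comment is needed.
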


\subsection{\texorpdfstring{Functional equations for \( S_{5,3} \)}{Functional equations for S\textunderscore{}\{5,3\}}}
\label{sec:s53alg}

We expect that \( S_{5,3} \) behaves like \( \Li_2 \) modulo lower depth Nielsen polylogarithms.  From Proposition \ref{prop:nielseninverse}, we have the inversion relation.  The reflection relation for \( S_{5,3} \) also holds, as the following shows.

\begin{Prop}\label{prop:s53ref}
	\( S_{5,3} \) satisfies the two-term reflection relation, modulo lower depth Nielsen polylogarithms
	\begin{align*}
	S_{5,3}(1-z) + S_{5,3}(z) \= & 2 S_{6,2}(1-z)+2 S_{6,2}(z)+S_{6,2}\Big(\frac{z}{z-1}\Big) \\&
	 -3 \Li_8(1-z)-3 \Li_8(z)-3 \Li_8\Big(\frac{z}{z-1}\Big) \Mod{products}\,.
	\end{align*}
	
	\begin{proof}
		The polynomial invariant is
		\begin{align*}
			-15 X^2 Y^4 + 15 X^4 Y^2 - \big( & -12 X Y^5 + 12 X^5 Y + 6 (X - 
			Y)^5 Y \\
			& + 3 Y^6  -3 X^6 + 3 (X - Y)^6 \big) \= 0 \, . \qedhere
		\end{align*}
	\end{proof}
\end{Prop}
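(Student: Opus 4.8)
The plan is to reduce the asserted identity to a single polynomial identity via the dictionary set up in Section~\ref{sec:spanningset}. Both sides of the relation are statements modulo products, and $S_{6,2}$ and $\Li_8 = S_{7,1}$ are Nielsen polylogarithms of depth $2$ and $1$ respectively (lower than the depth $3$ of $S_{5,3}$), so the content of the proposition is precisely a mod-products symbol identity among weight-$8$ Nielsen polylogarithms evaluated at anharmonic ratios of $z$. By the $\mathfrak{S}_3$-equivariant correspondence of Section~\ref{sec:spanningset}, with $N = 8$, such an identity holds if and only if the associated homogeneous polynomials of degree $N-2 = 6$ in the two variables $X \leftrightarrow z$, $Y \leftrightarrow 1-z$ satisfy the corresponding relation. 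Thus the whole proof amounts to mapping each term to its polynomial and checking that the difference of the two sides vanishes identically.

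First I would record the images of the two ``basic'' arguments, $S_{n,p}(z) \mapsto \binom{6}{p-1} X^{n-1} Y^{p-1}$ and $S_{n,p}(1-z) \mapsto -\binom{6}{p-1} Y^{n-1} X^{p-1}$, which immediately give $S_{5,3}(z) + S_{5,3}(1-z) \mapsto 15 X^4 Y^2 - 15 X^2 Y^4$, together with $\Li_8(z) \mapsto X^6$, $\Li_8(1-z) \mapsto -Y^6$, $S_{6,2}(z) \mapsto 6 X^5 Y$ and $S_{6,2}(1-z) \mapsto -6 X Y^5$. The one point requiring care is the sixth anharmonic ratio $\tfrac{z}{z-1}$, which is \emph{not} among the three arguments for which the dictionary is stated explicitly. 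I would compute its polynomial directly from Lemma~\ref{lem:snpmodsymb}: writing $w = \tfrac{z}{z-1}$, one has $1-w = \tfrac{1}{1-z} \leftrightarrow -Y$ and $w \leftrightarrow X - Y$ in additive (logarithmic) notation, while the antisymmetric $\Li_2$-factor $-(1-w)\wedge w$ contributes a sign $-1$ relative to the base orientation, yielding $S_{n,p}(\tfrac{z}{z-1}) \mapsto -\binom{6}{p-1}(X-Y)^{n-1}(-Y)^{p-1}$; in particular $S_{6,2}(\tfrac{z}{z-1}) \mapsto 6 Y (X-Y)^5$ and $\Li_8(\tfrac{z}{z-1}) \mapsto -(X-Y)^6$.

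With these in hand, the right-hand side maps to $12 X^5 Y - 12 X Y^5 + 6 Y (X-Y)^5 + 3 Y^6 - 3 X^6 + 3 (X-Y)^6$, and the proof concludes by verifying the single polynomial identity
\[
  -15 X^2 Y^4 + 15 X^4 Y^2 - \bigl( -12 X Y^5 + 12 X^5 Y + 6 (X-Y)^5 Y + 3 Y^6 - 3 X^6 + 3 (X-Y)^6 \bigr) \= 0 \, ,
\]
a routine expansion of the binomials $(X-Y)^5$ and $(X-Y)^6$. Since the polynomial invariant of the difference vanishes, the mod-products symbol identity holds, and by the correspondence established in Section~\ref{sec:spanningset} it is automatically a formal consequence of the reflection (Proposition~\ref{prop:nielsenreflection}) and inversion (Proposition~\ref{prop:nielseninverse}) relations, hence valid modulo products.

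There is no serious analytic obstacle here; the only genuine subtlety—and the step I would flag—is the treatment of the $\tfrac{z}{z-1}$ term. One must resist the temptation to first simplify $S_{6,2}(\tfrac{z}{z-1})$ by inversion to $S_{6,2}(1-\tfrac{1}{z})$, because the inversion relation introduces \emph{lower-depth} classical-polylogarithm correction terms which, although of lower depth, still contribute nontrivially to the mod-products symbol (equivalently, to the degree-$6$ polynomial); computing its symbol directly from Lemma~\ref{lem:snpmodsymb} sidesteps this pitfall. As an alternative route, one could instead differentiate both sides using $z\frac{d}{dz} S_{n,p}(z) = S_{n-1,p}(z)$ to reduce to a weight-$7$ identity and fix the constant of integration by letting $z \to 0$ or $z \to 1$, exactly as in the proofs of Propositions~\ref{prop:s22reduction} and~\ref{prop:s33reduce}; but the polynomial-invariant computation is shorter and makes the ``derivable from reflection and inversion'' structure transparent.
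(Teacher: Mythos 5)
Your proposal is correct and follows exactly the paper's own proof, which consists of verifying the same polynomial-invariant identity $-15X^2Y^4+15X^4Y^2 = -12XY^5+12X^5Y+6(X-Y)^5Y+3Y^6-3X^6+3(X-Y)^6$ via the dictionary of Section~\ref{sec:spanningset}. Your explicit derivation of the entry $S_{n,p}\bigl(\tfrac{z}{z-1}\bigr)\mapsto -\binom{N-2}{p-1}(X-Y)^{n-1}(-Y)^{p-1}$ (sign from the odd permutation, $1-w=\tfrac{1}{1-z}\leftrightarrow -Y$, $w\leftrightarrow X-Y$) correctly fills in the one step the paper leaves implicit.
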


By working out the product terms, one obtains a reduction
\begin{align*}
	S_{5,3}\Big(\frac{1}{2}\Big) \= &
	S_{6,2}\bigg(2 \bigg[\frac{1}{2}\bigg] + \frac{1}{2} \big[ -1 \big] \bigg)
	-3 \Li_8\Big(\frac{1}{2}\Big)
	-2\Li_7\Big(\frac{1}{2}\Big) \log (2)
	+S_{5,2}\Big(\frac{1}{2}\Big) \log(2)
	\\& 
	-\frac{1}{2} \Li_6\Big(\frac{1}{2}\Big) \log ^2(2)
	+\frac{2311}{768} \zeta(8)
	+\frac{1}{4} \zeta (2) \zeta (3)^2
	-\frac{1}{2} \zeta (3) \zeta (5)
	\\& 
	-\Big( \frac{255}{128} \zeta (7) 
	-\frac{1}{8} \zeta (4) \zeta (3) 
	-\frac{1}{2} \zeta(2) \zeta (5) \Big) \log (2)
	+\frac{1}{2!} \Big( \frac{23}{16} \zeta(6) 
	-\ \zeta (3)^2 \Big) \log ^2(2)
	\\&
	-\frac{1}{3!} \Big(  2 \zeta (5)
	-  \zeta (2) \zeta (3) \Big) \log ^3(2)
	+\frac{5}{4 \cdot 4!} \zeta (4) \log ^4(2)
	\\&
	-\frac{3}{5!} \zeta (3) \log ^5(2)
	 +\frac{3}{6!} \zeta (2) \log ^6(2)
	-\frac{10}{8!}\log	^8(2)
	 \, .
\end{align*}
This confirms the reduction suggested above following Proposition \ref{prop:s62s53minusone}.
\medskip

Naturally, one would hope to find a reduction for \( S_{5,3} \) of the five-term relation.  Using the result for \( S_{3,2} \), we can eliminate the \( \{1\}_3 \wedge  S_{3,2}(z) \) component of the coproduct, from \( S_{5,3}(\text{five-term}) \).  Unfortunately, we are still left with the non-trivial task of matching the remainder with \( S_{6,2} \) terms, with rational arguments.  The difficult part is to match the \( \sum\nolimits_i \{1\}_3 \wedge \{f_i(z)\}_5 \) and \( \sum\nolimits_{j} \{1\}_5 \wedge \{g_j(z)\}_3 \) components simultaneously with a combination of \( S_{6,2} \) terms. One could apply the idea of Section \ref{sec:limlin} and use the duplication relation, to obtain
\[
	\delta S_{6,2}\Big( \frac{1}{4} \big[z^2\big] - \big[z\big] - \big[{-}z\big]  \Big) = \frac{3}{16} \{1\}_3 \wedge \{z^2\}_5 \, .
\]
By substituting \( \sqrt{f_i(z)} \) into this, one can match by brute force the full motivic cobracket of \( S_{5,3}(\text{5-term}) \).  But then one is left with the more difficult task of matching the mod-products symbol by \( \Li_8 \) terms of arbitrary \emph{algebraic} arguments. \medskip

On the other hand, \( S_{3,2} \) of the algebraic \( \Li_2 \) equation from Section \ref{sec:algfe} has a relatively simple expression in terms of \( \Li_5 \).  So matching the \( S_{5,3} \) combination is more straightforward.  We have, noting that Proposition \ref{prop:s53ref} covers the special case \( a=b=1 \) in more detail, that
\begin{Prop} \label{prop:s53alg}
	Let \(a,b,c\in \ZZ\sm\{0\}\), with \( a+b+c=0 \), and let \( \{p_i(t)\}_{i=1}^r \)  be the roots of \( x^a(1-x)^b = t \).  For convenience take \( a > 0 \).
	Then the following functional equation holds on the level of the mod-products symbol
	\begin{align*}
		\sum_{i=1}^{r} S_{5,3}(p_i(t)) \modsh & \sum_{i=1}^{r} \bigg\{ \frac{2b - a}{b} S_{6,2}(p_i(t)) + \frac{b}{a} S_{6,2}(1 - p_i(t)) + \frac{b}{a+b} S_{6,2}(1 - p_i(t)^{-1}) \\
		&- \frac{a^2 - 2ab + 3b^2}{b^2} \Li_8(p_i(t)) - \frac{2ab - b^2}{a^2} \Li_8(1-p_i(t)) - \frac{2ab + 3b^2}{(a+b)^2} \Li_8(1 - p_i(t)^{-1}) \bigg\} \,.
	\end{align*}	
\end{Prop}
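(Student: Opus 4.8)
The plan is to mirror the symbol computations used for Propositions \ref{prop:s32alg}, \ref{prop:s42alg} and \ref{prop:s52alg}, working throughout on the level of the mod-products symbol and writing $p_i = p_i(t)$. As in those proofs I would drop $\Symbsh$ from the tensors and use the observation from Section \ref{sec:algfe} that $1 - p_i = t^{1/b} p_i^{-a/b}$, so that on symbols
\[
\otimes (1-p_i) \equiv \frac1b \otimes t - \frac ab \otimes p_i \,, \qquad \otimes (1 - p_i^{-1}) \equiv \frac1b \otimes t - \frac{a+b}b \otimes p_i \,,
\]
modulo $2$-torsion. First I would expand every term by stripping the last tensor factor with the recursion \eqref{snprecursion}, namely $\Symbsh S_{5,3}(p_i) = \Symbsh S_{4,3}(p_i)\otimes p_i + \Symbsh S_{5,2}(p_i)\otimes(1-p_i)$ on the left, and correspondingly $\Symbsh S_{6,2}(w) = \Symbsh S_{5,2}(w)\otimes w + \Symbsh \Li_7(w)\otimes(1-w)$ together with $\Symbsh \Li_8(w) = \Symbsh\Li_7(w)\otimes w$ for the three anharmonic arguments $w \in \{p_i, 1-p_i, 1-p_i^{-1}\}$ on the right.

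After substituting the two displayed relations, the difference of the two sides becomes a sum of weight-$7$ symbols tensored either with $t$ or with $p_i$, and I would treat these two parts separately, exactly as in the proof of Proposition \ref{prop:s42alg}. The part ending in $\otimes t$ is a weight-$7$ combination of $S_{5,2}$ and $\Li_7$ evaluated at $p_i, 1-p_i, 1-p_i^{-1}$; I would show it vanishes by recognising it as the $S_{5,2}$ algebraic functional equation of Proposition \ref{prop:s52alg}, after aligning its arguments $\frac{1}{1-p_i}, p_i^{-1}, 1-p_i^{-1}$ with $p_i, 1-p_i, 1-p_i^{-1}$ via the inversion and reflection relations (Propositions \ref{prop:nielseninverse} and \ref{prop:nielsenreflection}). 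Since $(-1)^5 = -1$, inversion gives $S_{5,2}(w^{-1}) \equiv -S_{5,2}(w)$ modulo lower depth and products, and the depth-$1$ ($\Li_7$) corrections produced by these alignments — which are \emph{not} killed by $\Symbsh$ — are precisely absorbed by the explicit $\Li_7$ part of Proposition \ref{prop:s52alg}.

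The part ending in $\otimes p_i$ is more delicate, because expanding $S_{5,3}$ produced a stray depth-$3$ term $\Symbsh S_{4,3}(p_i)$ that has no counterpart on the right-hand side. I would remove it using the depth reduction of $S_{4,3}$ established earlier, rewriting $S_{4,3}(p_i)$ in terms of $S_{5,2}$ and $\Li_7$ at $p_i, 1-p_i$ and $\frac{p_i}{p_i - 1} = (1-p_i^{-1})^{-1}$, the last of which is brought into the set $\{p_i,1-p_i,1-p_i^{-1}\}$ again by inversion. What then remains is a pure weight-$7$ combination of $S_{5,2}$ and $\Li_7$ at anharmonic arguments, which I would collapse to $0$ using the $S_{5,2}$ reflection relation together with a second application of Proposition \ref{prop:s52alg}; the coefficients $\frac{a^2 - 2ab + 3b^2}{b^2}$, $\frac{2ab - b^2}{a^2}$, $\frac{2ab + 3b^2}{(a+b)^2}$ attached to the $\Li_8$ terms are exactly those needed to make this cancellation work.

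I expect the $\otimes p_i$ step to be the main obstacle. The difficulty is twofold: one must reconcile the depth-$3$ contribution $S_{4,3}(p_i)$ against the depth-$\le 2$ right-hand side, and one must be careful that the inversion and reflection relations for $S_{5,2}$ only hold modulo lower depth and products, so the alignment of arguments has to be organised so that every such correction is genuinely a product (hence invisible to $\Symbsh$) or cancels against the explicit $\Li_7$ terms after summing over the roots $p_i$. A safe fallback, should the bookkeeping become unwieldy, is to verify the claimed symbol identity directly from Lemma \ref{lem:snpmodsymb} after substituting $1 - p_i = t^{1/b} p_i^{-a/b}$, since all arguments are explicit rational functions of the $p_i$ and $t$, so that the whole identity reduces to a finite check once the shuffle expressions are expanded.
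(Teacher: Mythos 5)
Your proposal is correct and follows essentially the same route as the paper: the authors' proof is a one-line remark that "the proof strategy is the same as in the previous cases" (i.e.\ expand via the recursion \eqref{snprecursion}, substitute \( 1-p_i = t^{1/b}p_i^{-a/b} \) in the last tensor slot, and split into the \( \otimes t \) and \( \otimes p_i \) parts) "and reduces to weight 7 functional equations, including the one in Proposition \ref{prop:s52alg}" --- exactly the weight-7 inputs (the \( S_{4,3} \) depth reduction, \( S_{5,2} \) inversion/reflection, and Proposition \ref{prop:s52alg}) that you identify. Your write-up is in fact more explicit than the paper's about where the stray depth-3 term \( S_{4,3}(p_i) \) goes and about tracking the \( \Li_7 \) corrections from inversion, but it is the same argument.
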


	\begin{Cor}
		We have the clean single-valued identity
		\begin{align*}
		\sum_{i=1}^{r} \SCS_{5,3}(p_i(t)) {}= & \sum_{i=1}^{r} \bigg\{ \frac{2b - a}{b} \SCS_{6,2}(p_i(t)) + \frac{b}{a} \SCS_{6,2}(1 - p_i(t)) + \frac{b}{a+b} \SCS_{6,2}(1 - p_i(t)^{-1}) \\
		&- \frac{a^2 - 2ab + 3b^2}{b^2} \LiCS_8(p_i(t)) - \frac{2ab - b^2}{a^2} \LiCS_8(1-p_i(t)) - \frac{2ab + 3b^2}{(a+b)^2} \LiCS_8(1 - p_i(t)^{-1}) \bigg\} \,.
		\end{align*}
	\end{Cor}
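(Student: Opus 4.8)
The plan is to follow the strategy of Propositions \ref{prop:s32alg}, \ref{prop:s42alg} and \ref{prop:s52alg} essentially verbatim, one weight higher. Dropping \( \Symbsh \) from the tensors and writing \( p_i = p_i(t) \), I peel off the final tensor entry of every term via the recursion \eqref{snprecursion}, and then use \( 1-p_i = t^{1/b} p_i^{-a/b} \) (valid up to a \( b \)-th root of unity, which dies on the symbol) to replace each \( \otimes(1-p_i) \) by \( \tfrac{1}{b}\otimes t - \tfrac{a}{b}\otimes p_i \), each \( \otimes(1-p_i^{-1}) \) by \( \tfrac{1}{b}\otimes t - \tfrac{a+b}{b}\otimes p_i \), and \( \otimes p_i^{-1} \) by \( -\otimes p_i \). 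The resulting weight-\(8\) mod-products symbol identity then separates into its \( \otimes p_i \)-part and its \( \otimes t \)-part, which I show vanish independently.

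For the left-hand side, \eqref{snprecursion} gives \( S_{5,3}(p_i) = S_{4,3}(p_i)\otimes p_i + S_{5,2}(p_i)\otimes(1-p_i) \), contributing \( \big(S_{4,3}(p_i) - \tfrac{a}{b} S_{5,2}(p_i)\big)\otimes p_i + \tfrac{1}{b} S_{5,2}(p_i)\otimes t \). On the right I peel each \( S_{6,2}(w) = S_{5,2}(w)\otimes w + \Li_7(w)\otimes(1-w) \) and each \( \Li_8(w) = \Li_7(w)\otimes w \). Collecting the \( \otimes t \)-terms yields the pure weight-\(7\) relation
\begin{align*}
& \sum_{i} \Big( \tfrac{1}{b} S_{5,2}(p_i) - \tfrac{1}{a} S_{5,2}(1-p_i) - \tfrac{1}{a+b} S_{5,2}(1-p_i^{-1}) \Big) \\
& \quad \modsh \sum_{i} \Big( \tfrac{2b-a}{b^2}\Li_7(p_i) - \tfrac{2a-b}{a^2}\Li_7(1-p_i) - \tfrac{2a+3b}{(a+b)^2}\Li_7(1-p_i^{-1}) \Big) \, .
\end{align*}
Applying the \( S_{5,2} \) inversion (Proposition \ref{prop:nielseninverse}, giving \( S_{5,2}(w^{-1}) \modsh -S_{5,2}(w) \) modulo \( \Li_7 \) and products) together with the \( \Li_7 \) inversion \( \Li_7(w^{-1}) \modsh \Li_7(w) \), this is exactly the algebraic \( S_{5,2} \) functional equation of Proposition \ref{prop:s52alg} (with \( c = -(a+b) \)), so the \( \otimes t \)-part vanishes.

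For the \( \otimes p_i \)-part I first rewrite \( S_{4,3}(p_i) \) by the depth reduction of \( S_{4,3} \) to \( S_{5,2} \) established in the weight-\(7\) section, which reads \( S_{4,3}(z) = -S_{5,2}(1-z) + 2 S_{5,2}(z) + S_{5,2}(\tfrac{z}{z-1}) \) up to explicit \( \Li_7 \) terms and products. Since \( \tfrac{p_i}{p_i-1} = (1-p_i^{-1})^{-1} \), the \( S_{5,2} \) inversion converts the last summand into \( -S_{5,2}(1-p_i^{-1}) \) modulo \( \Li_7 \) and products, whence the \( S_{5,2} \)-content of the \( \otimes p_i \)-coefficient becomes \( \tfrac{2b-a}{b} S_{5,2}(p_i) - S_{5,2}(1-p_i) - S_{5,2}(1-p_i^{-1}) \), precisely what the peeled right-hand side produces. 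The remaining \( \Li_7 \)-terms on both sides are aligned to the three arguments \( p_i, 1-p_i, 1-p_i^{-1} \) using only \( \Li_7 \) inversion, and are then seen to cancel.

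The main obstacle is entirely bookkeeping, but genuinely delicate: the three \( \Li_8 \)-coefficients \( \tfrac{a^2 - 2ab + 3b^2}{b^2} \), \( \tfrac{2ab - b^2}{a^2} \), \( \tfrac{2ab + 3b^2}{(a+b)^2} \) are the \emph{unique} rational functions of \( a, b \) for which the \( \otimes p_i \)- and \( \otimes t \)-parts can vanish \emph{simultaneously}, so one must track every spurious \( \Li_7 \) and product term generated by each application of inversion and reflection and check that they conspire to cancel, rather than leaving behind a residual \( \Li_7 \) functional equation (which Section \ref{sec:algfe} does not supply in this weight). I would carry out this verification by expanding symbolically in \( a, b \) — the computation is finite and rational — and I would sanity-check the outcome against the already-proven special case \( a = b = 1 \), which is the \( S_{5,3} \) reflection relation of Proposition \ref{prop:s53ref}.
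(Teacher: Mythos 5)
You have proved (in outline) the wrong statement: what you establish is the mod-products symbol identity, i.e.\ Proposition \ref{prop:s53alg} itself, whereas the Corollary asserts the \emph{clean single-valued} identity with \emph{no} constant term. The general lifting principle from the clean single-valued machinery only guarantees that a relation \( \Symbsh\big(\sum_i \alpha_i f_i\big) = 0 \) lifts to \( \sum_i \alpha_i \widehat{f_i} = \mathrm{constant} \), and that constant is a weight-\(8\) single-valued period which must still be pinned down. This is not a formality: in the analogous weight-\(5\) and weight-\(7\) corollaries the constant is a non-zero multiple of \( \zeta(5) \) resp.\ \( \zeta(7) \), depending on the sign pattern of \( a,b \). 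Your proposal never addresses this step, so at best you obtain the Corollary up to an undetermined constant.

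The paper's proof of the Corollary consists precisely of that missing step: one lets \( t \to 0 \), so that the roots \( p_i(t) \) degenerate to \( 0 \) (with multiplicity \( a \)) together with \( 1 \) or \( \infty \) according to whether \( b>0 \), \( -a<b<0 \), or \( b<-a \), and then uses the special values \( \LiCS_8(0) = \LiCS_8(1) = \LiCS_8(\infty) = 0 \), \( \SCS_{6,2}(0) = \SCS_{6,2}(1) = \SCS_{6,2}(\infty) = 0 \), \( \SCS_{5,3}(0) = \SCS_{5,3}(1) = \SCS_{5,3}(\infty) = 0 \) to conclude the constant vanishes in every case (this uniform vanishing is special to even weight, where \( \zeta^{\sv} \) is zero). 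Your symbol-level computation is a reasonable sketch of the Proposition's proof and matches the paper's stated strategy there, and your suggested sanity check against the \( a=b=1 \) reflection relation is sensible, but neither substitutes for the evaluation of the integration constant that is the actual content of the Corollary's proof.
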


	\begin{proof}
		Consider the limit \( t \to 0 \) and use \( \LiCS_8(0) = \LiCS_8(1) = \LiCS_8(\infty) = 0 \), \( \SCS_{6,2}(0) = \SCS_{6,2}(1) = \SCS_{6,2}(\infty) = 0 \), and \( \SCS_{5,3}(0) = \SCS_{5,3}(1) = \SCS_{5,3}(\infty) = 0 \).
		
		If \( b > 0 \), we obtain roots \( p_i = 0 \) with multiplicity \( a \) and \( p_i = 1 \) with multiplicity \( b \), giving the constant is \( 0 \).  If \( -a < b < 0 \), we obtain roots \( p_i = 0 \) with multiplicity \( a \), also giving the constant is \( 0 \).  Otherwise \( b < -a \), an we obtain roots \( p_i = 0 \) with multiplicity \( a \) and roots \( p_i = \infty \) with multiplicity \( -a -b \), still giving the constant is \( 0 \).
	\end{proof}
	
	\begin{proof}[Proof of Proposition]
		The proof strategy is the same as in the previous cases.  It reduces 
        to weight 7 functional equations, including the one in Proposition \ref{prop:s52alg}.
	\end{proof}

\subsection{Nielsen ladders} The concept of a `ladder' in some given weight \( N\) (already used above) was introduced by Lewin in order to account for 
identities of the type\  \( \Li_N \big( \sum_i \sum_{k=0}^1 {n_{i,k}} [(-1)^k \theta^i]\big) =0 \) \ with \( n_{i,k}\in \ZZ\), \( i\geq 0 \), for some algebraic 
number \( \theta \). Due to the duplication relation for  \( \Li_N \) one can actually reduce each such to a linear combination where all signs
\( (-1)^k \) have been dropped.
\begin{Def} We call an identity of the type 
 \( \sum_j \sum_{k=0}^1 S_{N-j,j} \big( \sum_i \sum_{k=0}^1 {n_{i,j,k}} [(-1)^k \theta^i]\big) =0 \) with \( n_{i,j,k}\in \ZZ\) 
a {\em Nielsen ladder} of weight \( N \).
\end{Def}

\begin{Rem}
We have a  non-trivial example of a Nielsen ladder in weight~8 for the algebraic number \( -\omega \) from \S  \ref{s32ladderomega}. The exact same procedure as used there applies, except for the final use of the duplication relation which is not known for \( S_{5,3} \) and we can depth reduce \( S_{5,3}(2 [\om]+[-\om] ) \) on the level of clean single-valued functions.
\end{Rem}

\section{\texorpdfstring{A family of depth reductions in general weight with arguments \( z \),  \( 1 - z \) and  \( 1 - z^{-1}\)}{A family of depth reductions in general weight with arguments z,  1 - z and  1 - 1/z}} 
	\subsection{Depth reduction in general weight}
	We end with the following result generalising the \( \Li_2 \)-behaviour of 
	\( S_{3,2} \) and \( S_{5,3} \) modulo lower depth from Propositions 
	\ref{prop:s32twoterm} and \ref{prop:s53ref}, and the \( \Li_3 \)-behaviour 
	of \( S_{4,2} \) modulo lower depth from Proposition \ref{prop:s42three}. 
	Moreover, it supports the claim about the behaviour of \( S_{2m-\varepsilon,m} \) for \( \varepsilon\in\{0,1,2\} \)
	alluded to in Remark \ref{rem:nielsendepthbehaviour}. More precisely, 
	we prove that \( S_{2m-2,m} \) reduces to lower depth, and we expect that \( S_{2m-1,m} \) behaves like \( S_{1,1} = \Li_2 \),
	and \( S_{2m,m} \) behaves like \( S_{2,1} = \Li_3 \). For other cases, the cobracket potentially 
	involves several terms of maximal depth. 
	
	\begin{Thm}\label{thm:depthreduce}
		For all \( m \geq 1 \) the following depth reductions, and 2-term and 3-term identities hold. \bigskip
		\begin{enumerate}
		\item	
		\abovedisplayskip=0pt\abovedisplayshortskip=0pt~\vspace*{-1.1\baselineskip}
		\begin{align*}
		& \hspace{-10em} S_{2m,m}([z] + [1-z] + [1-z^{-1}]) \modsh  \\
		 & \hspace{-10em} \sum_{j=1}^{m-1} (-1)^{j+1} \binom{m-1+j}{j} S_{2m+j,m-j}\Big( [z] + [1-z] + [1-z^{-1}] \Big)\,. 
		\end{align*}
		\bigskip
		\item 
		\abovedisplayskip=0pt\abovedisplayshortskip=0pt~\vspace*{-1.1\baselineskip}
		\begin{align*}
			& \hspace{-6em } S_{2m-1,m}([z] + [1-z]) \modsh \\
			& \hspace{-6em} \sum_{j=1}^{m-1} (-1)^{j+1} \binom{m-2+j}{j} S_{2m-1+j,m-j}\Big([z] + [1-z] + \frac{j}{m-1} [1 - z^{-1} ] \Big)\,.\hfill
		\end{align*}\bigskip
		\item 
		\abovedisplayskip=0pt\abovedisplayshortskip=0pt~\vspace*{-1.1\baselineskip}
		\begin{align*}
		&S_{2m-2,m}(z) \modsh \\
		& \sum_{j=1}^{m-1} (-1)^{j+1} \binom{m-2+j}{j} S_{2m-2+j,m-j}\Big([z] - \frac{j}{m+j-2} [1-z] - \frac{j}{m+j-2} [1 - z^{-1} ] \Big)\,.
		\end{align*}
		\end{enumerate}
	\end{Thm}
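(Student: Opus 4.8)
The plan is to reduce all three identities to identities of homogeneous polynomials in two variables, and then to verify these polynomial identities by a computation uniform in $m$. By Lemma~\ref{lem:snpmodsymb} the mod-products symbol of a weight-$N$ Nielsen polylogarithm factors as $-(1-z)\wedge z$ tensored with the symmetric word $(1-z)^{\otimes p-1}\shuffle z^{\otimes n-1}$, and, as recorded in Section~\ref{sec:spanningset}, passing to the associated homogeneous polynomial of degree $N-2$ in $\QQ[X,Y]$ gives a bijection between mod-products symbol relations and polynomial relations, under
\[
\begin{aligned}
S_{n,p}(z) &\:\mapsto\: \binom{N-2}{p-1}X^{n-1}Y^{p-1}, \\
S_{n,p}(1-z) &\:\mapsto\: -\binom{N-2}{p-1}Y^{n-1}X^{p-1}, \\
S_{n,p}(1-z^{-1}) &\:\mapsto\: \binom{N-2}{p-1}(Y-X)^{n-1}(-X)^{p-1}.
\end{aligned}
\]
(The common wedge factor is $X\wedge Y$ for all three arguments, which produces the signs above.) Applying this dictionary termwise turns (i), (ii), (iii) into explicit homogeneous polynomial identities of degrees $3m-2$, $3m-3$, $3m-4$ respectively, and by bijectivity it suffices to prove those.

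Next I would clean up the coefficients before summing. The two mixed weights collapse to single binomials,
\[
\frac{j}{m+j-2}\binom{m-2+j}{j}=\binom{m+j-3}{j-1}, \qquad \frac{j}{m-1}\binom{m-2+j}{j}=\binom{m-2+j}{j-1},
\]
so that after substitution every summand carries a product of two binomials of negative-binomial type (namely $\binom{m-2+j}{j}$, $\binom{m-1+j}{j}$, or their shifts) times the ``outer'' binomial $\binom{N-2}{m-j-1}$ arising from the image of $S_{2m-\varepsilon+j,\,m-j}$. Each polynomial identity then splits, according to the three argument-substitutions, into convolution sums of the shape $\sum_j(-1)^j\binom{\alpha+j}{j}\binom{N-2}{m-j-1}$ weighted by the monomials $X^{\ast+j}Y^{\ast-j}$ for the $z$ and $1-z$ arguments, and by the binomial expansion of $(Y-X)^{\ast+j}(-X)^{\ast-j}$ for the $1-z^{-1}$ argument.

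To evaluate the $j$-sums I would pass to generating functions: package the $z$-, $1-z$-, and $1-z^{-1}$-families by $\tfrac{1}{1-uX-vY}$, $-\tfrac{1}{1-uY-vX}$ and $\tfrac{1}{1-u(Y-X)+vX}$, and perform the convolutions using $\sum_{j\ge0}(-1)^j\binom{M+j}{j}t^j=(1+t)^{-(M+1)}$ together with Chu--Vandermonde, after which each claimed equality becomes a manifest rational-function identity. The $m=2$ instances, which recover Propositions~\ref{prop:s32twoterm}, \ref{prop:s42three} and \ref{prop:s53ref}, serve as the base check and already exhibit all the cancellations in miniature. I also expect part~(i) to simplify further: there the three arguments enter with \emph{equal} weight, so one can factor out the common substitution operator $T[f]=f(X,Y)-f(Y,X)+f(Y-X,-X)$ and reduce the whole statement to showing that $\binom{3m-2}{m-1}X^{2m-1}Y^{m-1}-\sum_j(-1)^{j+1}\binom{m-1+j}{j}\binom{3m-2}{m-j-1}X^{2m+j-1}Y^{m-j-1}$ lies in $\ker T$.

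The hard part will be the $1-z^{-1}$ term. Unlike the $z$ and $1-z$ arguments, whose images are single monomials, the substitution $(X,Y)\mapsto(Y-X,-X)$ spreads each such summand across every monomial of degree $N-2$, so its contributions leak into, and couple with, the other two families; the crux is to verify that these ``off-diagonal'' contributions cancel against the $z$- and $1-z$-sums except for the single monomial (resp.\ antisymmetric pair, resp.\ fully symmetric combination) surviving on the left. For part~(i) this is exactly the $\ker T$ statement above, but for parts~(ii) and~(iii) the $j$-dependent weights $\tfrac{j}{m-1}$ and $\tfrac{j}{m+j-2}$ must be carried through the convolution, so there the double-binomial generating-function identity cannot be avoided. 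Organizing this bookkeeping so that the cancellation is transparent — rather than a brute-force coefficient comparison — is the main obstacle, and the generating-function packaging above is the route I would take to make it routine.
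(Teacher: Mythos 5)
Your first step coincides with the paper's: pass to the polynomial invariant of Section~\ref{sec:spanningset}, so that part (i) becomes the identity $P_{m-1}(X,Y)-P_{m-1}(Y,X)+P_{m-1}(Y-X,-X)=0$ with $P_n(X,Y)=\sum_{j=0}^n(-1)^j\binom{n+j}{j}\binom{3n+1}{n-j}X^{2n+1+j}Y^{n-j}$, and your coefficient simplifications $\frac{j}{m-1}\binom{m-2+j}{j}=\binom{m-2+j}{j-1}$ and $\frac{j}{m+j-2}\binom{m-2+j}{j}=\binom{m+j-3}{j-1}$ are correct. But from that point on the proposal is a plan, not a proof, and the plan stalls exactly where you say it does. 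Your reformulation of (i) as ``show the given polynomial lies in $\ker T$'' is just a restatement of the target identity, and the assertion that the convolutions become ``manifest rational-function identities'' after packaging the three families as $\tfrac{1}{1-uX-vY}$, $-\tfrac{1}{1-uY-vX}$, $\tfrac{1}{1-u(Y-X)+vX}$ and applying $\sum_j(-1)^j\binom{M+j}{j}t^j=(1+t)^{-(M+1)}$ plus Chu--Vandermonde is unsupported; the $(Y-X,-X)$ substitution scatters each monomial over the whole degree, and no amount of single-convolution identities will by itself exhibit the three-way cancellation. You flag this yourself as ``the main obstacle,'' which is an accurate self-assessment: the obstacle is not overcome.

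The missing idea in the paper is structural, not bookkeeping. One first shows the product formula $P_n(X,Y)=X^{n+1}\sum_{a+b=n}\binom{n+a}{a}(XY)^a\binom{n+b}{b}\bigl(X(Y-X)\bigr)^b$, equivalently $P_n(X,Y)=[w^n]\bigl(\tfrac{X}{(1-XYw)(1-X(Y-X)w)}\bigr)^{n+1}$. Lagrange inversion then shows that $U(X,Y,z)=\sum_n P_n(X,Y)\tfrac{z^{n+1}}{n+1}$ satisfies the cubic $U(1-XYU)(1-X(Y-X)U)=Xz$, whose three roots are precisely $U(X,Y,z)$, $\tfrac{1}{XY}-U(Y,X,z)$ and $\tfrac{1}{X(Y-X)}+U(Y-X,-X,z)$; Vieta's formula for the sum of the roots gives (i), and Newton's identities for the power sums $U_1^k+U_2^k+U_3^k$ with $k=1,2$ (together with a more general form of Lagrange inversion expressing the generating series of the $(ii)$/$(iii)$ coefficient arrays in terms of $U$ and $U^2$) give (ii) and (iii). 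Without this step --- or some substitute that actually evaluates the $(Y-X,-X)$ contribution --- the argument is incomplete for all three parts.
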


	\begin{proof}
		Under the polynomial invariant from Section~\ref{sec:spanningset}, Part 
		(i) is equivalent to the identity
		\begin{equation}\label{eqn:p3term}
			P_{m-1}(X,Y) - P_{m-1}(Y,X) + P_{m-1}(Y-X,-X) = 0 \,,
		\end{equation}
		where
		\[
			P_n(X,Y) \ceq \sum_{j=0}^n (-1)^j \binom{n + j}{j} \binom{3n + 1}{n-j} X^{2n+1+j} Y^{n-j} \,.
		\]
		A routine calculation shows that
		\[
			P_n(X,Y) = X^{n+1}\sum_{a+b=n} \binom{n+a}{a} (XY)^a \binom{n+b}{b} 
			(X(Y-X))^{b} \, ,
		\]
		which implies that
		\[
			P_n(X,Y) = [w^n] \Big( \frac{X}{(1 - X Y w) (1 - X(Y-X) w)} 
			\Big)^{n+1} \,,
		\]
        where $[z^n]f$ denotes the coefficient of~$z^n$ in a power series~$f$.
		Therefore, using the Lagrange inversion formula, we see that the 
		generating series
		\[
			U(X,Y,z) \ceq \sum_{n=0}^\infty P_n(X,Y) \frac{z^{n+1}}{n+1} \, 
		\]
		satisfies the cubic equation
		\[
			U(1 - XY U)(1 - X(Y-X) U) \= X z \,.
		\]
		
		This cubic equation in \( U \) has three solutions, and it is easy to check that they are given by
		\begin{align*}
			U_1(X,Y,z) &= U(X,Y,z) \,, \\
			U_2(X,Y,z) &= \frac{1}{X Y} - U(Y,X,z) \,, \\
			U_3(X,Y,z) &= \frac{1}{X (Y-X) } + U(Y-X,-X,z) \, .
		\end{align*}
		Since the coefficient of \( -U^2 \) in the associated monic cubic equation is 
		\(
			\frac{1}{X Y} + \frac{1}{X (Y-X)}
		\)
		we get that
		\[
			U(X,Y,z) - U(Y,X,z) + U(Y-X,-X,z) \= 0\,,
		\]
		which proves \eqref{eqn:p3term}.
        
        The other two parts are proved similarly, so we only sketch the proof 
        of~(ii) which is slightly more complicated than~(i). 
        In this case we need to prove
        \begin{equation}\label{eqn:p2term}
        Q_{m-1}(X,Y) - Q_{m-1}(Y,X) + \widetilde{Q}_{m-1}(Y-X,-X) = 0 \,,
        \end{equation}
        where 
        \begin{align*}
        Q_n(X,Y) &\ceq \sum_{j=0}^n (-1)^j \binom{n - 1 + j}{j} \binom{3n}{n-j} 
        X^{2n+j} Y^{n-j} \,, \\
        \widetilde{Q}_n(X,Y) &\ceq \sum_{j=0}^n (-1)^j \binom{n - 1 + j}{j} 
        \binom{3n}{n-j}\frac{j}{n} X^{2n+j} Y^{n-j}\,,\quad n>0 \,,
        \end{align*}
        and $\widetilde{Q}_0\ceq0$. We claim that the generating series
        \[
        V(X,Y,z) \ceq \sum_{n=0}^\infty Q_n(X,Y) \frac{z^{n+1}}{n+1} \,,
        \quad \quad
        \widetilde{V}(X,Y,z) \ceq \sum_{n=0}^\infty \widetilde{Q}_n(X,Y) 
        \frac{z^{n+1}}{n+1} \,,
        \]
        can be expressed in terms of $U(X,Y,z)$ as
        \begin{equation} \label{eqn:p2termgen}
        V(X,Y,z) \= X^{-1}U+\frac{1}{2}(X-Y)U^2\,,\quad\quad
        \widetilde{V}(X,Y,z) = -\frac{1}{2}XU^2\,,
        \end{equation}
        from which after a simple calculation we see that~\eqref{eqn:p2term} 
        is implied by
        \[
        U_1^k+U_2^k+U_3^k \= \frac{1}{(XY)^k}+\frac{1}{(X(Y-X))^k}\,,\quad 
        k=1,2\,,
        \]
        which again follows from Vieta's formulas for the cubic equation 
        satisfied by $U$. To prove~\eqref{eqn:p2termgen} we use Lagrange
        inversion in a more general form 
            \[H(g(z)) \= 
            \sum_{n\ge0}[w^n](H'(w)\phi^{n+1}(w))\frac{z^{n+1}}{n+1}\,,\]
        where $g(z)$ satisfies $g(z)=z\phi(g(z))$ and $H(w)$ is a formal 
        power series without a constant term ($\phi$ is a power series with 
        $\phi(0)\ne0$). 
        To obtain~\eqref{eqn:p2termgen} we use the following simple identities
        \[
        Q_n(X,Y) \= [w^n]\frac{X^{2n}(1+Yw)^{3n}}{(1+Xw)^{n}}\,,\quad\quad
        \widetilde{Q}_n(X,Y) \= 
        [w^n]\frac{-wX^{2n+1}(1+Yw)^{3n}}{(1+Xw)^{n+1}}\,,
        \]
        and the analogous identity for $P_n$
        \[
        P_n(X,Y) \= [w^n] \frac{X^{2n+1}(1+Yw)^{3n+1}}{(1+Xw)^{n+1}} \,,
        \]
        together with the Lagrange inversion formula for    
        $$\phi(w) \=\frac{X^2(1+Yw)}{(1+Xw)}\,,$$ and the following three choices
        for~$H$:
        \begin{equation*}
        	H_1(w)  \= \frac{w(2+(X+Y)w)}{2X^2(1+Yw)^2} \,, \quad
        	H_2(w)  \= \frac{-w^2}{2X(1+Yw)^2} \text{\,, and} \quad
        	H_3(w)  \= \frac{w}{X(1+Yw)} \,.\qedhere
        \end{equation*}
	\end{proof}

\begin{Cor} There are Nielsen ladders in arbitrary weight.
\end{Cor}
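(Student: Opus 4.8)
The plan is to produce ladders by specialising the general-weight depth reductions of Theorem~\ref{thm:depthreduce} at an algebraic number \( \theta \) chosen so that the three arguments \( z \), \( 1 - z \) and \( 1 - z^{-1} \) appearing there all become signed powers of \( \theta \).

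First I fix the base of the ladder. For an integer \( k \geq 1 \), let \( \theta = \theta_k \in (0,1) \) denote the unique real root of \( x^k + x - 1 \) in that interval, so that \( 1 - \theta = \theta^k \). A direct computation then gives
\[
	1 - \theta^{-1} \= \frac{\theta - 1}{\theta} \= -\frac{\theta^k}{\theta} \= -\theta^{k - 1} \,,
\]
so that, setting \( z = \theta \), the three arguments become \( z = \theta \), \( 1 - z = \theta^k \) and \( 1 - z^{-1} = -\theta^{k - 1} \); all three are of the form \( \pm \theta^i \). The cases \( k = 1 \) and \( k = 2 \) recover the familiar bases \( \theta = \tfrac{1}{2} \), with arguments in \( \{ \tfrac{1}{2}, -1 \} \), and \( \theta = \phi^{-1} \), the golden ratio base already used for the weight \( 5 \) evaluations above.

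Next I specialise each of the three families in Theorem~\ref{thm:depthreduce} at \( z = \theta \). In part~(i) the formal combination \( [z] + [1 - z] + [1 - z^{-1}] \) becomes \( [\theta] + [\theta^k] + [-\theta^{k-1}] \), and likewise the \( \tfrac{j}{m-1} \)- and \( \tfrac{j}{m+j-2} \)-weighted brackets occurring in parts~(ii) and~(iii) become rational combinations of the symbols \( [\pm\theta^i] \). Hence for each \( m \geq 1 \) the theorem yields a mod-products symbol identity
\[
	\sum_j c_j\, S_{N - j, j}\Big( \sum_i n_{i,j}\,[\pm\theta^i] \Big) \modsh 0 \,,
	\qquad c_j, n_{i,j} \in \QQ \,,
\]
in weight \( N = 3m \), \( 3m - 1 \) and \( 3m - 2 \) respectively. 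As \( m \) ranges over the positive integers these three families exhaust all weights \( N \geq 2 \), which is the required ``arbitrary weight'' statement.

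Finally I upgrade these symbol identities to genuine analytic relations. By Lemma~\ref{lem:snpmodsymb} together with the clean single-valued construction of \( \SCS_{n,p} \), any mod-products symbol relation lifts to an honest identity
\[
	\sum_j c_j\, \SCS_{N - j, j}\Big( \sum_i n_{i,j}\,[\pm\theta^i] \Big) \= \mathrm{constant} \,,
\]
exactly as was carried out for the \( \omega \)-ladders in Section~\ref{s32ladderomega} and the weight \( 8 \) example in the preceding Remark. Clearing denominators to make the coefficients integral, and absorbing the constant (which is a single-valued zeta value, itself a rational multiple of \( \LiCS_N(1) = \SCS_{N-1,1}([\theta^0]) \)) into the \( j = 1 \) term, realises the result as a Nielsen ladder in the sense of the Definition above. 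I expect the only genuine obstacle to be checking nontriviality, i.e.\ that the specialisation does not collapse the relation to \( 0 = 0 \); this is immediate for the bases with \( k \geq 2 \), where the powers \( \theta, \theta^k, -\theta^{k-1} \) are distinct, so the top-depth term \( S_{2m - \varepsilon, m} \) on the left cannot be produced by the strictly lower-depth terms on the right.
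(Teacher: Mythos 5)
Your proof is correct and follows essentially the same route as the paper: specialise Theorem \ref{thm:depthreduce} at a root \( \theta \) of \( 1-x = \pm x^r \) so that \( z \), \( 1-z \) and \( 1-z^{-1} \) all become signed powers of \( \theta \) (you take the \( 1-x = x^k \) case, the paper allows both signs and additionally invokes inversion to clear any negative powers, which in your setup do not arise). Your extra remarks on lifting the mod-products symbol identity to a clean single-valued identity and on nontriviality are consistent with, and slightly more explicit than, the paper's one-line argument.
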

\begin{proof} Denote by  \( \theta \) a root of \( (1-x) \pm x^r\) for some \( r \in \ZZ_{>0} \), so that
\( 1 - \theta = \mp\theta^{r} \)  and \( 1 - \theta ^{-1} = \pm\theta^{r-1} \).
 Specialising to \( z= \theta \)  in the theorem, all the arguments become, up to sign, powers of \( \theta \). Terms with 
negative powers of \( \theta \) can be replaced, via inversion (Proposition \ref{prop:nielseninverse}), by positive powers.
\end{proof}

\appendix

\section{\texorpdfstring{Evaluation of $S_{3,2}$ at values involving the golden ratio}{Evaluation of S\textunderscore{}{3,2} at values involving the golden ratio}}\label{app:reductiongoldenratio} 

	Recall the following evaluations  involving the golden ratio \( \phi = \frac{1}{2} (1 + \sqrt{5}) \) for \( \Li_2 \) (see \cite[Equations 1.20 and 1.21]{Le}, or \cite[Section 1.1]{Za3}):
	\begin{alignat*}{2}
		\Li_2(\phi^{-2}) &\= \phantom{{}+{}}\frac{2}{5}\zeta(2) - \phantom{\frac{1}{1}} \log ^2(\phi) \,, \quad\quad & \Li_2(\phi^{-1}) &\= \phantom{{}+{}}\frac{3}{5}\zeta(2) - \log ^2(\phi) \,, \\[1ex]
		\Li_2(-\phi^{-1}) &\= -\frac{2}{5} \zeta(2) + \frac{1}{2}\log ^2(\phi) \,, \quad\quad &	\Li_2(-\phi ) &\= -\frac{3}{5} \zeta(2)-\log ^2(\phi) \,.
	\end{alignat*}

    Corresponding to these \( \Li_2 \) evaluations, we have the following evaluations for the clean single-valued Nielsen polylogarithm \( \SCS_{3,2} \):
    \begin{alignat*}{7}
	\SCS_{3,2}(\phi ^{-2})\=\frac{1}{33} \LiCS_5\big(&&{}-8 [\phi ^{-3}]{}+{}&&780 [\phi ^{-1}]{}+{}&&804 [-\phi ]{}+{}&&8 [-\phi ^3]\big){}+{}&&\zeta (5) \,, \\[1ex]
	\SCS_{3,2}(-\phi ^{-1})\=\frac{1}{33}\LiCS_5\big(&& 8 [\phi ^{-3}]{}+{}&&243 [\phi ^{-1}]{}+{}&&318 [-\phi ]{}-{}&&8 [-\phi ^3]\big){}+{}&&\zeta (5) \,,  \\[1ex]
	\SCS_{3,2}(\phi ^{-1})\=\frac{1}{33}\LiCS_5\big(&& 8 [\phi ^{-3}]{}-{}&&219 [\phi ^{-1}]{}-{}&&243 [-\phi]{}-{}&&8 [-\phi ^3]\big){}+{}&&\zeta (5)  \,, \\[1ex]
	\SCS_{3,2}(-\phi )\=\frac{1}{33}\LiCS_5\big(&&{}-8 [\phi ^{-3}]{}-{}&&243 [\phi ^{-1}]{}-{}&&219 [-\phi ]{}+{}&&8 [-\phi ^3] \big) {}+{}&&\zeta (5) \,.
    \end{alignat*}
    For the complex analytic Nielsen polylogarithm \( S_{3,2} \) we have:
    \begin{align*}
	    S_{3,2}(\phi ^{-2})\={}&
	    \frac{1}{33} \Li_5\big({}-8 [\phi ^{-3}]{}+{}780 [\phi ^{-1}]{}+{}804 [-\phi ]{}+{}8 [-\phi ^3]\big)
	    + \Li_4(\phi ^{-2})\log (\phi ) \\ &
	    +\frac{1}{2}\zeta (5)
	    +\frac{481}{11} \zeta (4) \log (\phi )
	    -\zeta(3)\Li_2(\phi ^{-2})
	     +\frac{50}{11} \zeta (2) \log ^3(\phi )
	    +\frac{14}{15}\log ^5(\phi ) \,, \\[2ex]
	    S_{3,2}(-\phi ^{-1})\={}&
	    \frac{1}{33}\Li_5\big( \phantom{{}+{}} 8 [\phi ^{-3}]{}+{}243 [\phi ^{-1}]{}+{}318 [-\phi ]{}-{}8 [-\phi ^3]\big)
	    -\Li_4(-\phi ^{-1}) \log (\phi )\\ &
	    +\frac{1}{2}\zeta (5)
	    +\frac{325}{22} \zeta (4) \log (\phi )
	    -\zeta(3)\Li_2(-\phi ^{-1})
	     +\frac{21}{22} \zeta (2) \log ^3(\phi )
	    -\frac{7}{12} \log ^5(\phi ) \,, \\[2ex]
	    S_{3,2}(\phi ^{-1})\={}&
	    \frac{1}{33}\Li_5\big( \phantom{{}+{}} 8 [\phi ^{-3}]{}-{}219 [\phi ^{-1}]{}-{}243 [-\phi]{}-{}8 [-\phi ^3]\big)
	    +2\Li_4(\phi ^{-1}) \log (\phi ) \\ &
	    +\frac{1}{2}\zeta (5)
	    -\frac{335}{22} \zeta (4) \log (\phi )
	    -\zeta(3)\Li_2(\phi ^{-1})
	     -\frac{28}{11} \zeta (2) \log ^3(\phi )
	    -\frac{8}{15} \log ^5(\phi ) \,, \\[2ex]
	    S_{3,2}(-\phi )\={}&
	    \frac{1}{33}\Li_5\big({}-8 [\phi ^{-3}]{}-{}243 [\phi ^{-1}]{}-{}219 [-\phi ]{}+{}8 [-\phi ^3] \big)
	    -2 \Li_4(-\phi )  \log (\phi )\\ &
	    +\frac{1}{2}\zeta (5)
	    -\frac{325}{22} \zeta (4) \log (\phi)
	     -\zeta(3)\Li_2(-\phi)
	    -\frac{16}{11} \zeta (2) \log ^3(\phi )
	    +\frac{8 }{15}\log ^5(\phi ) \,.
    \end{align*}

\section{\texorpdfstring{Reduction of $S_{3,2}(\big[\frac{1}{9}\big] - 6 \big[\frac{1}{3}\big]))$ to $\Li_5$ and lower depth}{Reduction of S\textunderscore{}{3,2}([1/9] - 6 [1/3]) to Li\textunderscore{}5 and lower depth}}\label{app:reductions32ladder}
\begin{align*}
	S_{3,2}\bigg(\bigg[\frac{1}{9} \bigg]-6 \bigg[\frac{1}{3}\bigg] \bigg) \= 
	\Li_5\bigg( & \frac{1}{16}  \bigg[\frac{1}{9}\bigg]+\frac{21}{2} \bigg[\frac{1}{4}\bigg]+36 \bigg[\frac{1}{3}\bigg]
	 -100 \bigg[\frac{1}{2}\bigg] -60 \bigg[\frac{2}{3}\bigg]+\frac{69}{2} \bigg[\frac{3}{4}\bigg]-2 \bigg[\frac{8}{9}\bigg] \bigg) \\ {}&
	 + \frac{1855 }{12}\zeta (5)
	 + 6\Li_4\Big(\frac{1}{3}\Big) \log\Big(\frac{2}{3}\Big) - \Li_4\Big(\frac{1}{9}\Big) 
	\log \Big(\frac{8}{9}\Big)
	\\&
	+\zeta (2) \Big(\frac{128}{3}\log ^3(2)
	-84 \log^2(2) \log (3)
	+54 \log (2) \log ^2(3)
	-\frac{61}{6} \log ^3(3)\Big)
	\\&
	- \zeta(3) \Li_2\bigg(\bigg[\frac{1}{9} \bigg]-6 \bigg[\frac{1}{3}\bigg] \bigg)
	 +\zeta (4) \Big(52 \log (2)-\frac{239}{4}\log(3) \Big) 
	\\& -\frac{67}{6}\log ^5(2)
	+ 23 \log ^4(2) \log (3)
	 -23 \log ^3(2) \log ^2(3) 
	\\& +17\log ^2(2) \log ^3(3)
	-\frac{33}{4} \log (2) \log ^4(3)
	 +\frac{19}{12}\log ^5(3)
	 \,.
\end{align*}

\section{\texorpdfstring{Reduction of $S_{6,2}(-1)$ to $\zeta(3,5)$ and lower depth}{Reduction of S\textunderscore{}{6,2}(-1) to zeta(3,5) and lower depth}}\label{app:reduction}

The following combination which we abbreviate as $\Lambda$ below {\footnotesize

 \scriptsize
\begin{alignat*}{4}
& \phantom{{}+{}} 50508755462288597796 \, \big[{-}2^{11} 3^{-7}\big]
&&+69841566365930200554764814 \, \big[{-}2^{-2} 3^1\big]
&&-775364232778811798418105642 \, \big[{-}2^1 3^{-1}\big] 
\\[-1ex]
&+9614338651927197388368 \, \big[{-}2^{-7} 3^4\big]
&&+356655652241330545382160 \, \big[{-}2^{-4} 3^2\big]
&&+22509382601419271262985124160 \, \big[{-}2^{-1}\big]
\\[-1ex]
&+126912035059272811134 \, \big[{-}2^{10} 3^{-7}\big]
&&-94164506374654687219920 \, \big[{-}2^2 3^{-2}\big]
&&-14944644124416655455996 \, \big[{-}2^{-6} 3^3\big]
\\[-1ex]
&+578363469392155525327836 \, \big[{-}2^{-3} 3^1\big]
&&-1389650271294609827123449194 \, \big[{-}3^{-1}\big]
&&+142150983452642605772646 \, \big[{-}2^3 3^{-3}\big]
\\[-1ex]
&+234866506563215285097901896 \, \big[{-}2^{-2}\big]
&&-2156235930824838852840480 \, \big[{-}2^1 3^{-2}\big]
&&+545472150324080280895440 \, \big[{-}2^{-4} 3^1\big]
\\[-1ex]
&+52935763185068637077963640 \, \big[{-}2^{-1} 3^{-1}\big]
&&-1637817842535871022208 \, \big[{-}2^5 3^{-5}\big]
&&-3832788189554116913056832 \, \big[{-}2^{-3}\big]
\\[-1ex]
&-7534735430532974309850624 \, \big[{-}3^{-2}\big]
&&+229760377972981805891088 \, \big[{-}2^{-5} 3^1\big]
&&+1824486564349437387795018 \, \big[{-}2^{-2} 3^{-1}\big]
\\[-1ex]
&+146288680291430373180960 \, \big[{-}2^{-1} 3^{-2}\big]
&&-1841986300588118314548 \, \big[{-}2^{-9} 3^3\big]
&&-449734750753601709254502 \, \big[{-}2^{-3} 3^{-1}\big]
\\[-1ex]
&+1026645718908856249515210 \, \big[{-}3^{-3}\big]
&&+104005977148977093591408 \, \big[{-}2^{-5}\big]
&&+98806808342364061998789 \, \big[{-}2^{-4} 3^{-1}\big]
\\[-1ex]
&-90845492233250820003624 \, \big[{-}2^{-1} 3^{-3}\big]
&&-856806635887547864148 \, \big[{-}2^2 3^{-5}\big]
&&-1618278846243184730952 \, \big[{-}2^{-6}\big]
\\[-1ex]
&-19002715158472937734824 \, \big[{-}2^1 3^{-5}\big]
&&-865828810038222668088 \, \big[{-}2^{-2} 3^{-4}\big]
&&-6222083524060876926624 \, \big[{-}2^{-7} 3^{-1}\big]
\\[-1ex]
&-1110630706006093486416 \, \big[{-}2^{-9}\big]
&&-738916774978949856954 \, \big[{-}2^{-6} 3^{-3}\big]
&&+256696765017519764574 \, \big[{-}2^{-1} 3^{-7}\big]
\\[-1ex]
&-538995368726709238620 \, \big[2^{-3} 3^{-6}\big]
&&+2659063284848174620104 \, \big[3^{-5}\big]
&&+26750471369678154671328 \, \big[2^{-5} 3^{-1}\big]
\\[-1ex]
&-22869536297787068698224 \, \big[2^{-7} 3^1\big]
&&+26750471369678154671328 \, \big[3^{-3}\big]
&&+1178782871405313104861940 \, \big[2^{-1} 3^{-2}\big]
\\[-1ex]
&+521062220884439910260592 \, \big[2^1 3^{-3}\big]
&&+2935816641298266366693024 \, \big[2^{-2} 3^{-1}\big]
&&-310513457035175924880 \, \big[2^{-11} 3^5\big]
\\[-1ex]
&-1295961764172934408392024 \, \big[2^{-3}\big]
&&+47361088156862575216815120 \, \big[2^{-1} 3^{-1}\big]
&&+96143386519271973883680 \, \big[2^{-4} 3^1\big]
\\[-1ex]
&+6405316545334014067721724 \, \big[2^1 3^{-2}\big]
&&+25072646886079199648640 \, \big[2^3 3^{-3}\big]
&&-1384706396500391342516779656 \, \big[3^{-1}\big]
\\[-1ex]
&+8128823582861345906142336 \, \big[2^{-3} 3^1\big]
&&-7000414961399434681344 \, \big[2^5 3^{-4}\big]
&&+22794254041869638225651427336 \, \big[2^{-1}\big]
\\[-1ex]
&+116644982485749618488112 \, \big[2^4 3^{-3}\big]
&&-791724010495502232049202784 \, \big[2^1 3^{-1}\big]
&&+48384399276356552737368768 \, \big[2^{-2} 3^1\big]
\\[-1ex]
&-118427126740566034100976 \, \big[2^{-5} 3^3\big]
&&+421717883947747928801820 \, \big[2^3 3^{-2}\big]
&&+2651619475018716827904 \, \big[2^{-8} 3^5\big] \,,
\end{alignat*}
}
satisfies
\begin{align*}
(\LiZS_7 \cdot \nu_2) (\Lambda) & \overset{?}{\=} -175442386671378179202538515 \zeta(7) \,, \\*
(\LiZS_7 \cdot \nu_3) (\Lambda) & \overset{?}{\=} 0 \,,
\end{align*}
where \( (\LiZS_7 \cdot \nu_p)(x) \ceq \LiZS_7(x) \nu_p(x) \), extended to formal linear combinations by linearity as usual.

 This combination \( \Lambda\) gives the following candidate reduction of \( S_{6,2}(-1) \) to \( \zeta(3,5) \), polylogs and products.  It has been verified to 20,000 decimal places in PARI/GP \cite{PARI2}.
\begin{align*}
& S_{6,2}(-1) \overset{?}{\=} 
 -\frac{127}{64} \Li_8\Big( (-175442386671378179202538515)^{-1} \Lambda \Big) \\
& + \frac{3}{80} \zeta(3,5)+\frac{15}{16} \zeta (3) \zeta (5)+\frac{127}{64} \log (2) \zeta (7) 
 -\tfrac{19402627481307724677394420487}{5658362329180826944988958720}	\zeta (8) \\[1ex]
& 
+ \big( -\tfrac{614191658599101564926973}{2508139330310650241573120} \log ^2(2)
-\tfrac{2886590561748348125557503}{5894127426230028067696832} \log (2) \log (3) \\
&\quad\quad
+\tfrac{156730163328824575308249123}{943060388196804490831493120}	\log ^2(3) \big) \zeta (6) \\
&
+\big( \tfrac{4333162707367886155376747}{58941274262300280676968320} \log ^4(2) 
-\tfrac{5659544278330031492868951}{29470637131150140338484160} \log ^3(2) \log (3) 
\\&\quad\quad
+\tfrac{14956296074710188358483053}{58941274262300280676968320} \log ^2(2) \log ^2(3) 
-\tfrac{4904619219145722353667}{40041626536888777633810} \log (2) \log ^3(3) 
\\& \quad\quad
+\tfrac{18073086979999367134054497 }{943060388196804490831493120}\log ^4(3) \big) \zeta	(4) \\
&
+ \big( -\tfrac{91321852215653745732887 }{55257444620906513134657800}\log ^6(2) 
-\tfrac{1295653753433384401869 }{75372473481202404957760}\log ^5(2) \log (3)
\\& \quad\quad
 +\tfrac{339723349179163751737503}{5894127426230028067696832} \log ^4(2) \log ^2(3) 
-\tfrac{1084240497917160885726101}{14735318565575070169242080} \log ^3(2) \log ^3(3) 
\\& \quad\quad
 +\tfrac{2552445606672179607323583}{58941274262300280676968320} \log ^2(2) \log ^4(3) 
-\tfrac{176801938693662694517853}{14735318565575070169242080} \log (2) \log ^5(3) 
\\& \quad\quad
+\tfrac{3078249389028940034364037}{2357650970492011227078732800} \log ^6(3) \Big) \zeta	(2)
\\& + \big( \tfrac{66439339984835171875512961}{7072952911476033681236198400} \log	^8(2)
-\tfrac{93928768086503200785586131}{2062944599180509823693891200} \log ^7(2) \log	(3)
\\& \quad\quad
+\tfrac{30360617039303735894889903 }{294706371311501403384841600}\log ^6(2) \log	^2(3)
-\tfrac{40045462381407097906386587 }{294706371311501403384841600}\log ^5(2) \log	^3(3)
\\& \quad\quad
+\tfrac{6559870959604648612738713 }{58941274262300280676968320}\log ^4(2) \log	^4(3)
-\tfrac{16638301041582761143511231 }{294706371311501403384841600}\log ^3(2) \log	^5(3)
\\& \quad\quad
+\tfrac{10057243964473014455324631}{589412742623002806769683200} \log ^2(2) \log	^6(3)
-\tfrac{1437747378492833287500083}{515736149795127455923472800} \log (2) \log	^7(3)
\\& \quad\quad
+\tfrac{74287595521730329182293}{401302292849704038651699200} \log ^8(3) \big)
 \, .
\end{align*}

\end{document}